\begin{document}

\begin{frontmatter}



\title{Stochastic variational inequalities with oblique subgradients\tnoteref{footnotetitle}}
\tnotetext[footnotetitle]{The work for this paper was supported by the Grant Marie Curie Initial Training Network (ITN) FP7-PEOPLE-2007-1-1-ITN, no.213841.}

\author[auth1]{Anouar M. Gassous}
\ead{masigassous@gmail.com}

\author[auth1,auth2]{Aurel R\u{a}\c{s}canu}
\ead{aurel.rascanu@uaic.ro}

\author[auth1]{Eduard Rotenstein\corref{correspondent}}
\ead{eduard.rotenstein@uaic.ro}

\address[auth1]{Faculty of Mathematics, ``Al. I. Cuza''\ University, Bd. Carol I, no. 9-11, Ia\c{s}i, Romania}
\address[auth2]{``Octav Mayer'' Mathematics Institute of the Romanian Academy, Bd. Carol I, no. 8, Ia\c{s}i, Romania}
\cortext[correspondent]{Corresponding author.}

\begin{abstract}
In this paper we will study the existence and uniqueness of the solution for
the stochastic variational inequality with oblique subgradients of the
following form:
$$\left\{\begin{array}{l}
dX_{t}+H\left(X_{t}\right)\partial\varphi\left(X_{t}\right)\left(dt\right)\ni f\left(t,X_{t}\right)dt+g\left(t,X_{t}\right)dB_{t},\quad
t>0,\smallskip \\
X_{0}=x\in\overline{\emph{Dom}\left(\varphi\right)}.
\end{array}
\right.$$
Here, the mixture between the monotonicity property of the subdifferential operator $\partial\varphi$ and the Lipschitz property of matrix mapping $X\longmapsto H(X)$ leads to stronger difficulties comparing to the classical case of stochastic variational inequalities. The existence result is based on a deterministic approach: a differential system with singular input is first analyzed.
\end{abstract}

\begin{keyword}
oblique reflection \sep Skorohod problem \sep stochastic variational inequalities

\textit{AMS Classification subjects}: 60H10\sep 60H15\sep 35K85
\end{keyword}

\end{frontmatter}

\section{Introduction}
\label{int}

Since the early sixties, research has paid increasing attention to the study of reflected stochastic differential equations, the reflection process being approached in different ways. Skorohod, for instance, considered the problem of reflection for diffusion processes into a bounded domain (see, e.g., \cite{Skorohod62}). Tanaka focused on the problem of reflecting boundary conditions into convex sets for stochastic differential equations (see \cite{Tanaka78}). This kind of problem became the interest of many other authors, who considered that the state process is reflected by one or two reflecting barriers (see, e.g., \cite{Cepa:93}, \cite{Cepa:98}, \cite{Mckean63}, \cite{Hamadene/Hassani:2005} and the references therein). While, during the first studies, the trajectories of the system were reflected upon the normal direction, in 1984 Lions and Sznitman, in the paper \cite{Lions/Sznitman-84}, studied for the first time the following problem of oblique reflection in a domain:
\begin{equation}
\left\{
\begin{array}{l}
dX_{t}+dK_{t}=f\left(t,X_{t}\right) dt+g\left(t,X_{t}\right) dB_{t},\quad
t>0,\smallskip \\
X_{0}=x,\quad K_{t}={\displaystyle\int_{0}^{t}}1_{\left\{ X_{s}\in Bd(\emph{E})\right\}}\gamma(X_{s})d\left\updownarrow K\right\updownarrow_{s},
\end{array}
\right.\label{firstSkorohod problem}
\end{equation}
where, for the bounded oblique reflection $\gamma\in\mathcal{C}^{2}\left(\mathbb{R}^{d}\right)$, there exists a positive constant $\nu$ such that $\left(\gamma(x),n(x)\right)\geq\nu$, for every $x\in\emph{Bd}(E)$, $n(x)$ being the unit outward normal vector. A generalization, with
respect to the smoothness of the domain, of the result of Lions and Sznitman
was given after by Depuis and Ishi in the paper \cite{Dupuis/Ishii:93}. They
assumed that the domain in which we have the oblique reflection has some
additional regularity properties.

The aim of our paper consists in extending the problem of oblique reflection in the framework of deterministic and stochastic variational inequalities. This kind of multivalued stochastic differential equations were introduced in the literature by Asiminoaei \& R\u{a}\c{s}canu in \cite{Asiminoaei/Rascanu:97}, Barbu \& R\u{a}\c{s}canu in \cite{Barbu/Rascanu:97} and Bensoussan \& R\u{a}\c{s}canu in \cite{Bensoussan/Rascanu:97}. They proved the existence and uniqueness result for the case of stochastic variational differential systems involving
subdifferential operators and, even more, they provided approximation and splitting-up schemes for this type of equations. The general result, for stochastic differential equations governed by maximal monotone operators
\begin{equation}
\left\{
\begin{array}{l}
dX_{t}+A\left(X_{t}\right) \left(dt\right) \ni f\left(t,X_{t}\right)
dt+g\left(t,X_{t}\right) dB_{t},\smallskip \\
X_{0}=\xi,\ t\in \left[0,T\right]
\end{array}
\right.\nonumber
\end{equation}
was given by R\u{a}\c{s}canu in \cite{Rascanu:96}, the approach for proving
the existence and uniqueness being done via a deterministic multivalued
equation with singular input.

A different approach for solving these type of equations was introduced by R\u{a}\c{s}canu \& Rotenstein in the paper \cite{Rascanu/Rotenstein:11}. They reduced the existence problem for multivalued stochastic differential equations to a minimizing problem of a convex lower semicontinuous function. The solutions of these equations were identified with the minimum points of some suitably constructed convex lower semicontinuous functionals, defined on well chosen Banach spaces.

As the main objective of this paper we prove the existence and uniqueness of the solution for the following stochastic variational inequality
\begin{equation}
\left\{
\begin{array}{l}
dX_{t}+H\left(X_{t}\right) \partial \varphi \left(X_{t}\right)\left(dt\right)\ni f\left(t,X_{t}\right)dt+g\left(t,X_{t}\right)dB_{t},\quad
\quad t>0,\smallskip \\
X_{0}=x_{0},
\end{array}
\right.  \label{eqstgenerales}
\end{equation}
where $B$ is a standard Brownian motion defined on a complete probability
space and the new quantity that appears acts on the set of subgradients and
it will be called, from now on, \textit{oblique subgradient}. The problem becomes challenging due to the presence of this new term, which impose the use of some specific approaches because this new term preserve neither the monotony of the subdifferential operator nor the Lipschitz property of the matrix involved. First, we will
focus on the deterministic case, considering a generalized Skorohod problem
with oblique reflection of the form
\begin{equation}
\left\{
\begin{array}{l}
x\left(t\right) +{\displaystyle\int_{0}^{t}}H\left(x\left(s\right)
\right)dk\left(s\right)=x_{0}+{\displaystyle\int_{0}^{t}}f\left(s,x\left(s\right)\right)ds+m\left(t\right),\quad t\geq 0,\smallskip \\
dk\left(s\right)\in\partial\varphi\left(x\left(s\right)\right)
\left(ds\right),
\end{array}
\right.  \label{generalesko}
\end{equation}
where the singular input $m:\mathbb{R}_{+}\rightarrow\mathbb{R}^{d}$ is a continuous function. The existence results are obtained via Yosida penalization techniques.

The paper is organized as follows. Section 2 presents the notations and assumptions that will be used along this article and, also, a deterministic
generalized Skorohod problem with oblique reflection is constructed. The existence and uniqueness result for this problem can also be found here.
Section 3 is dedicated to the main result of our work; more precisely, the existence of a unique strong solution for our stochastic variational
inequa\-li\-ty with oblique subgradients is proved. The last part of the paper groups together some useful results that are used throughout this article.

\section{Generalized convex Skorohod problem with oblique subgradients}

\subsection{Notations. Hypotheses}

We first study the following deterministic generalized convex Skorohod problem with oblique subgradients:
\begin{equation}
\left\{
\begin{array}{l}
dx\left(t\right)+H\left(x\left(t\right)\right)\partial\varphi\left(x\left(t\right)\right)\left(dt\right)\ni dm\left(t\right),\quad
t>0,\smallskip \\
x\left( 0\right) =x_{0},
\end{array}
\right. \label{osp-eq1}
\end{equation}
where
\begin{equation}
\left\{
\begin{array}{rl}
\left(i\right)\quad & x_{0}\in Dom\left( \varphi \right) \overset{def}{=}\{x\in \mathbb{R}^{d}:\varphi(x)<\infty \},\medskip \\
\left(ii\right)\quad & m\in C\left(\mathbb{R}_{+};\mathbb{R}^{d}\right),\quad m\left( 0\right)=0,
\end{array}
\right.  \label{osp-h0}
\end{equation}
$H=\left(h_{i,j}\right)_{d\times d}\in C_{b}^{2}\left(\mathbb{R}^{d};
\mathbb{R}^{d\times d}\right)$ is a matrix, such that for all $x\in \mathbb{R}^{d}$,
\begin{equation}
\left\{
\begin{array}{rl}
\left(i\right)\quad & h_{i,j}\left(x\right)=h_{j,i}\left(x\right),\quad \textrm{for every\ }i,j\in \overline{1,d},\medskip \\
\left(ii\right)\quad & \dfrac{1}{c}\left\vert u\right\vert^{2}\leq
\left\langle H\left(x\right)u,u\right\rangle\leq c\left\vert u\right\vert
^{2},\quad \forall ~u\in \mathbb{R}^{d}\textrm{\ (for some\ }c\geq 1\textrm{)}
\end{array}
\right.  \label{osp-h0-A}
\end{equation}
and
\begin{equation}
\varphi:\mathbb{R}^{d}\rightarrow\left]-\infty,+\infty\right]\textrm{\ is
a proper l.s.c. convex function.}  \label{osp-h2}
\end{equation}
Denote by $\partial \varphi$ the subdifferential operator of $\varphi$:
$$\partial\varphi\left(x\right)\overset{def}{=}\left\{ \hat{x}\in \mathbb{R}^{d}:\left\langle \hat{x},y-x\right\rangle +\varphi \left( x\right) \leq
\varphi \left(y\right),\textrm{ for all }y\in \mathbb{R}^{d}\right\}$$ and
$Dom(\partial \varphi )=\{x\in \mathbb{R}^{d}:\partial \varphi (x)\neq\emptyset \}$. We will use the notation $(x,\hat{x})\in \partial \varphi$ in order to express that $x\in Dom(\partial \varphi )$ and $\hat{x}\in \partial \varphi (x)$.\medskip

The vector defined by the quantity $H\left(x\right) h$, with $h\in \partial
\varphi \left( x\right)$, will be called in what follows {\it oblique subgradient}.

\begin{remark}
If $E$ is a closed convex subset of $\mathbb{R}^{d}$, then
$$\varphi\left(x\right)=I_{E}\left(x\right)=\left\{\begin{array}{cc}
0, & \textrm{if }x\in E,\medskip \\
+\infty, & \textrm{if\ }x\notin E
\end{array}
\right.$$
is a convex l.s.c. function and, for $x\in E$,
$$\partial I_{E}\left(x\right)=\{\hat{x}\in \mathbb{R}^{d}:\left\langle \hat{x},y-x\right\rangle \leq 0,\;\forall ~y\in E\}=N_{E}\left(x\right),$$ where $N_{E}\left(x\right) $ is the closed external normal cone to $E$ at $x$. We have $N_{E}\left( x\right)=\emptyset$ if $x\notin E$ and $N_{E}\left(x\right)=\left\{0\right\}$ if $x\in int\left(E\right) $ (we denoted by $int\left(E\right)$ the interior of the set $E$).
\end{remark}

\begin{remark}
A vector $\nu_{x}$ associated to $x\in Bd\left(E\right)$ (we denoted by $Bd\left(E\right)$ the boundary of the set $E$) is called
external direction if there exists $\rho_{0}>0$ such that $x+\rho \nu
_{x}\notin E$ for all $0<\rho \leq \rho_{0}.$ In this case there exists $
c^{\prime }>0$, $n_{x}\in N_{E}\left(x\right),$ $\left\vert
n_{x}\right\vert=1$, such that $\left\langle n_{x},\nu_{x}\right\rangle
\geq c^{\prime}$. Remark that, if we consider the symmetric matrix
\begin{equation}
M\left( x\right) =\left\langle \nu _{x},n_{x}\right\rangle I_{d\times d}-\nu
_{x}\otimes n_{x}-n_{x}\otimes \nu _{x}+\frac{2}{\left\langle \nu
_{x},n_{x}\right\rangle }\nu _{x}\otimes \nu _{x}~,  \label{osp-A1}
\end{equation}
then%
$$\nu _{x}=M\left( x\right) n_{x}~,\textrm{ for all\ } x\in Bd\left(E\right).$$\smallskip
\end{remark}

Let $\left[ H\left( x\right) \right] ^{-1}$ be the inverse matrix of $H\left( x\right)$. Then $\left[ H\left(x\right)\right]^{-1}$ has the
same properties (\ref{osp-h0-A}) as $H\left( x\right)$. Denote
$$b=\sup\limits_{x,y\in \mathbb{R}^{d},~x\neq y}\frac{\left\vert H\left(
x\right) -H\left( y\right) \right\vert }{\left\vert x-y\right\vert }
+\sup\limits_{x,y\in \mathbb{R}^{d},~x\neq y}\frac{|\left[ H\left(x\right)
\right] ^{-1}-\left[ H\left(y\right)\right]^{-1}|}{\left\vert
x-y\right\vert},$$
where $\left\vert H\left( x\right) \right\vert \overset{def}{=}\left(\sum_{i,j=1}^{d}\left\vert h_{i,j}\left(x\right)\right\vert^{2}\right)
^{1/2}.$\medskip \newline
We shall call {\it oblique reflection} directions of the form
$$\nu_{x}=H\left( x\right) n_{x},\quad \textrm{with }x\in Bd\left(E\right),$$
where $n_{x}\in N_{E}\left(x\right)$.\medskip

If $E=\overline{E}\subset \mathbb{R}^{d}$ and $E^{c}=\mathbb{R}^{d}\setminus
E$, then we denote by
$$E_{\varepsilon }=\left\{x\in E:dist\left(x,E^{c}\right)\geq\varepsilon
\right\}=\overline{\left\{x\in E:B\left(x,\varepsilon \right)\subset
E\right\}}$$
the $\varepsilon-$interior of $E$.\medskip

We impose the following supplementary assumptions
\begin{equation}
\left\{
\begin{array}{rl}
\left(i\right) \quad & D=Dom\left(\varphi \right) \;\textrm{is a closed
subset of\ }\mathbb{R}^{d},\medskip \\
\left(ii\right) \quad & \exists \textrm{~}r_{0}>0,\;D_{r_{0}}\neq \emptyset
\quad \textrm{and}\quad h_{0}=\sup_{z\in D}dist\left( z,D_{r_{0}}\right)<\infty,\medskip \\
\left(iii\right) \quad & \exists ~L\geq 0,\;\textrm{such that }\left\vert
\varphi \left(x\right) -\varphi \left(y\right)\right\vert \leq
L+L\left\vert x-y\right\vert,\\ & \hfill \textrm{for all\ }x,y\in D.
\end{array}
\right.  \label{osp-h3}
\end{equation}

\noindent For example, condition (\ref{osp-h3}-($iii$)) is verified by functions $\varphi:\mathbb{R}^{d}\rightarrow\mathbb{R}$ of the following type:
\[
\varphi\left(x\right)=\varphi_{1}\left(x\right)+\varphi_{2}\left(x\right)+I_{D}\left(x\right)\text{,}
\]
where $D$ is a convex set satisfying (\ref{osp-h3}-($ii$)), $\varphi_{1}:\mathbb{R}^{d}\rightarrow\mathbb{R}$ is a convex lower semicontinuous function, $\varphi_{2}:D\rightarrow\mathbb{R}$ is a Lipschitz function and $I_{D}$ is the convex indicator of the set $D$.

\subsection{A generalized Skorohod problem}

In this section we present the notion of solution for the generalized convex Skorohod problem with oblique subgradients (\ref{osp-eq1}) and, also, we provide full proofs for its existence and uniqueness.\medskip

If $k:\left[ 0,T\right] \rightarrow \mathbb{R}^{d}$ and $\mathcal{D}\left[ 0,T\right] $ is the set of the partitions of the time interval $\left[ 0,T\right] $, of the form $\Delta =(0=t_{0}<t_{1}<...<t_{n}=T)$, we denote
\[
S_{\Delta }(k)=\sum\limits_{i=0}^{n-1}|k(t_{i+1})-k(t_{i})|
\]%
and $\left\updownarrow k\right\updownarrow _{T}\overset{def}{=}%
\sup\limits_{\Delta \in \mathcal{D}}S_{\Delta }(k)$. In the sequel we
consider the space of bounded variation functions $BV(\left[ 0,T\right] ;%
\mathbb{R}^{d})=\{k~|~k:\left[ 0,T\right] \rightarrow \mathbb{R}^{d},$ $%
\left\updownarrow k\right\updownarrow _{T}<\infty \}.$
Taking on the space of continuous functions $C\left(\left[ 0,T\right];\mathbb{R}^{d}\right)$ the usual norm
\[
\left\Vert y\right\Vert _{T}\overset{def}{=}\left\Vert y\right\Vert _{C(\left[ 0,T\right];\mathbb{R}^{d})}=\sup \left\{ \left\vert y\left( s\right) \right\vert :0\leq s\leq T\right\},
\]
then $(C(\left[ 0,T\right] ;\mathbb{R}^{d}))^{\ast }=\{k\in BV(\left[ 0,T\right]; \mathbb{R}^{d}):k(0)=0\}$. The duality between these spaces is given
by the Riemann--Stieltjes integral $\left( y,k\right) \mapsto
\int_{0}^{T}\left\langle y\left( t\right) ,dk\left( t\right) \right\rangle.$
We will say that a function $k\in BV_{loc}([0,+\infty \lbrack ;\mathbb{R}^{d})$ if, for every $T>0$, $k\in BV(\left[ 0,T\right] ;\mathbb{R}^{d})$.$\smallskip$

\begin{definition}
Given two functions $x,k:\mathbb{R}_{+}\rightarrow$ $\mathbb{R}^{d}$, we say
that $dk\left( t\right) \in\partial\varphi\left( x\left( t\right) \right)
\left( dt\right) $ if
$$\begin{array}{ll}
\left(a\right) & x,k:\mathbb{R}_{+}\rightarrow\mathbb{R}^{d}\textrm{\ are continuous,}\medskip\\
\left(b\right) & x\left(t\right)\in\overline{Dom\left(\varphi\right)},\medskip\\
\left(c\right) & k\in BV_{loc}\left([0,+\infty \lbrack;\mathbb{R}^{d}\right),k\left(0\right)=0,\medskip\\
\left(d\right) & \displaystyle\int_{s}^{t}\left\langle y\left(r\right)-x(r),dk\left(r\right)\right\rangle+\displaystyle\int
_{s}^{t}\varphi\left(x\left(r\right)\right) dr\leq\displaystyle\int_{s}^{t}\varphi\left(y\left(r\right)\right)dr,\smallskip \\
\multicolumn{1}{r}{} & \multicolumn{1}{r}{\textrm{\ for all\ }0\leq s\leq t\leq
T\textrm{\ and\ }y\in C\left(\left[0,T\right];\mathbb{R}^{d}\right).}
\end{array}$$
\end{definition}

\noindent We state that

\begin{definition}
\label{def1}A pair of functions $\left(x,k\right)$ is a solution of the Skorohod problem with $H-$oblique subgradients (\ref{osp-eq1}) (and we write $\left(x,k\right) \in \mathcal{SP}\left(H\partial \varphi ;x_{0},m\right)$) if $x,k:\mathbb{R}_{+}\rightarrow$ $\mathbb{R}^{d}$ are continuous functions
and
\begin{equation}
\left\{
\begin{array}{ll}
\left( i\right)\; & x\left(t\right) +\displaystyle\int_{0}^{t}H\left(x\left(r\right)\right)dk\left(r\right)=x_{0}+m\left(t\right),\quad
\forall ~t\geq 0,\smallskip \\
\left(ii\right)\; & dk\left(r\right) \in \partial\varphi\left(x\left(r\right)\right)\left(dr\right).
\end{array}
\right.  \label{osp-sp}
\end{equation}
\end{definition}

In Annex, Section 4.1., we present some lemmas with a priori estimates of the solutions $\left(x,k\right)\in \mathcal{SP}\left(H\partial \varphi;x_{0},m\right)$.
We here recall the result from Lemma \ref{oSP-l4-compact}.

\begin{proposition}
\label{p1-apri-estim}If $\left(x,k\right) \in \mathcal{SP}\left( H\partial
\varphi ;x_{0},m\right) $ then, under assumptions (\ref{osp-h0}), (\ref{osp-h0-A}), (\ref{osp-h2}) and (\ref{osp-h3}) there exists a constant $C_{T}\left(\left\Vert m\right\Vert_{T}\right)=C\left(T,\left\Vert m\right\Vert_{T},b,c,r_{0},h_{0}\right)$, increasing function with respect to $\left\Vert m\right\Vert_{T}$, such that, for all $0\leq s\leq t\leq T$,
\begin{equation}
\begin{array}{l}
(a)$\quad$\, \left\Vert x\right\Vert_{T}+\left\updownarrow
k\right\updownarrow_{T}\leq C_{T}\left(\left\Vert m\right\Vert_{T}\right),\medskip \\
(b)$\quad$\, \left\vert x\left(t\right)-x\left(s\right)
\right\vert +\left\updownarrow k\right\updownarrow_{t}-\left\updownarrow
k\right\updownarrow_{s}\leq C_{T}\left(\left\Vert m\right\Vert_{T}\right)
\times \sqrt{t-s+\mathbf{m}_{m}\left(t-s\right)},
\end{array}
\label{prop of the sol}
\end{equation}
where $\mathbf{m}_{m}$ represents the modulus of continuity of the continuous function $m$.
\end{proposition}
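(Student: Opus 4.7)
The strategy rests on two ideas. First, I extract the total variation of $k$ by testing the subdifferential condition against a family of continuous test functions $y$ probing every unit direction and then taking a supremum. Second, I exploit the symmetry and uniform ellipticity of $H$ to convert $dk$-integrals into integrals against the continuous bounded-variation path $M(t):=x_0+m(t)-x(t)=\int_0^t H(x(r))\,dk(r)$; this is forced on us because $m$ is only continuous and $x$ itself need not be BV, so one cannot directly apply a chain rule to $x$.

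For (a), I fix $z\in D_{r_0}$ (non-empty by (\ref{osp-h3})(ii)) and, for every continuous $v:[0,T]\to\mathbb{R}^d$ with $|v(r)|\le 1$, apply the subdifferential inequality with the admissible test function $y(r)=z+r_0 v(r)\in\overline{B(z,r_0)}\subset D$. The sublinear bound (\ref{osp-h3})(iii) then yields
\[
r_0\int_s^t\langle v(r),dk(r)\rangle+\int_s^t\langle z-x(r),dk(r)\rangle\le L\bigl(1+r_0+|z|+\|x\|_T\bigr)(t-s).
\]
Taking the supremum over such $v$ gives $r_0\bigl(\updownarrow k\updownarrow_t-\updownarrow k\updownarrow_s\bigr)\le\int_s^t\langle x(r)-z,dk(r)\rangle+L\bigl(1+r_0+|z|+\|x\|_T\bigr)(t-s)$. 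Using the symmetry of $H$ I would rewrite $\langle x-z,dk\rangle=\langle H^{-1}(x)(x-z),dM\rangle$, and then apply the chain rule to the weighted Lyapunov function $\Phi(r):=\tfrac12\langle x(r)-z,H^{-1}(x(r))(x(r)-z)\rangle$: its derivative produces the desired pairing plus a correction term controlled by the Lipschitz constant $b$ of $H^{-1}$ together with $|dx|\le|dm|+c|dk|$. Combining this with the elementary estimate $\|x\|_T\le|x_0|+\|m\|_T+c\updownarrow k\updownarrow_T$ coming from the equation, and iterating on short subintervals to absorb the quadratic correction, produces (a).

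For (b), the equation itself yields the pointwise bound $|x(t)-x(s)|\le \mathbf{m}_m(t-s)+c(\updownarrow k\updownarrow_t-\updownarrow k\updownarrow_s)$, so only the increment of $\updownarrow k\updownarrow$ needs estimation. Applying the inequality of the preceding paragraph on $[s,t]$ and splitting $x(r)-z=(x(r)-x(s))+(x(s)-z)$, the Lyapunov identity for $\Phi$ together with the already established bound (a) on $\|x\|_T$ gives an estimate of the shape
\[
\alpha\,U\le\beta\,U^2+C\bigl((t-s)+\mathbf{m}_m(t-s)\bigr),
\]
where $U:=\updownarrow k\updownarrow_t-\updownarrow k\updownarrow_s$ and $\alpha,\beta,C$ depend only on $T,\|m\|_T,b,c,r_0,h_0$. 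Since (a) keeps $U$ bounded, this quadratic inequality resolves into $U\le C_T(\|m\|_T)\sqrt{(t-s)+\mathbf{m}_m(t-s)}$, which combined with the above pointwise bound yields (b).

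The principal obstacle is the non-constancy of $H$. In the classical (normal-reflection) case one simply applies the chain rule to $\tfrac12|x(r)-z|^2$; here the pairing $|x-z|^2$ does not interact correctly with the oblique direction $H(x)\,dk$, and the replacement by the $H^{-1}(x)$-weighted Lyapunov functional $\Phi$ forces a first-order correction whose reabsorption requires the $C^2$ regularity and uniform ellipticity of $H$ (the constants $b$ and $c$). The square-root modulus in (b) is precisely the signature of this quadratic compensation between the $\int\langle x-z,dk\rangle$ term and the Lyapunov energy.
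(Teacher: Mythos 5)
There are two genuine gaps here. The first is technical but real: your Lyapunov step is not well defined. You correctly observe at the outset that $x$ need not be of bounded variation (only $x-m=x_{0}-\int_{0}^{\cdot}H(x)\,dk$ is), yet you then differentiate $\Phi(r)=\tfrac12\langle x(r)-z,H^{-1}(x(r))(x(r)-z)\rangle$, whose ``derivative'' involves $dx$ and $d[H^{-1}(x(r))]$, and you bound the correction by $|dx|\le|dm|+c|dk|$ --- but $|dm|$ is not a finite measure for a merely continuous $m$, and $H^{-1}(x(\cdot))$ is not BV either. The paper circumvents this (Lemma \ref{l1-mc-x}) by applying the quadratic form, with the matrix $H^{-1}$ frozen at the right endpoint $x(t)$, to the bounded-variation path $r\mapsto x(r)-m(r)-x(s)+m(s)$, whose differential is exactly $-H(x(r))\,dk(r)$.

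The second gap is the decisive one and concerns (b). With a fixed interior point $z\in D_{r_{0}}$, any bound on $\int_{s}^{t}\langle x(r)-z,dk(r)\rangle$ inevitably carries a term of order $|x(s)-z|\cdot U$, where $U=\updownarrow k\updownarrow_{t}-\updownarrow k\updownarrow_{s}$: either from $\langle x(s)-z,k(t)-k(s)\rangle$ after your splitting, or from the boundary term $\Phi(s)-\Phi(t)$, since $|\Phi(s)-\Phi(t)|\lesssim(\|x\|_{T}+|z|)\,(\mathbf{m}_{m}(t-s)+cU)$. Its coefficient is of order $\|x\|_{T}+|z|$, which is not small compared with $r_{0}$, so what you actually obtain is $r_{0}U\le C\,U+\beta U^{2}+\dots$ with $C$ possibly much larger than $r_{0}$; the claimed shape $\alpha U\le\beta U^{2}+C\bigl((t-s)+\mathbf{m}_{m}(t-s)\bigr)$ never materializes, and no modulus of continuity for $\updownarrow k\updownarrow$ follows. (Your fixed-$z$ device, repaired as above and iterated over subintervals on which the oscillation of $m$ is below $r_{0}/2$, does yield the global bound (a); it is only (b) that it cannot reach.) What is needed is a test direction anchored at $x(s)$ itself: by (\ref{osp-h3})(ii), setting $v=\frac{1}{1+h_{0}}\bigl(\pi_{D_{r_{0}}}(x(s))-x(s)\bigr)$ one has $\overline{B}(x(r)+v,\rho_{0})\subset D$ whenever $|x(r)-x(s)|\le 2\delta_{0}$, and testing with $y(r)=x(r)+v+\rho_{0}\alpha(r)$ replaces the bad term by $|k(t)-k(s)|$, which is itself an increment and is controlled through the equation by $c\bigl(\mathbf{m}_{m}(t-s)+|x(t)-x(s)|\bigr)$ plus an absorbable $bc\,\mathbf{m}_{x}(t-s)\,U$. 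This is the paper's Lemma \ref{l1-oSp}; it must then be combined with Lemma \ref{sp-l2} (for the time spent in $D_{\delta_{0}}$) and the Lions--Sznitman stopping-time decomposition carried out in the proof of Lemma \ref{oSP-l4-compact} to patch the local estimates into (a) and (b) simultaneously.
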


We renounce now at the restriction that the function $f$ is identically $0\ $%
and we consider the equation written under differential form%
\begin{equation}
\left\{
\begin{array}{l}
dx\left( t\right) +H\left( x\left( t\right) \right) \partial \varphi \left(
x\left( t\right) \right) \left( dt\right) \ni f\left( t,x\left( t\right)
\right) dt+dm\left( t\right),\quad t>0,\smallskip \\
x\left( 0\right)=x_{0},
\end{array}%
\right.  \label{ob5}
\end{equation}%
where%
\begin{equation}
\begin{array}{ll}
\left( i\right) \; & \left( t,x\right) \longmapsto f\left( t,x\right):%
\mathbb{R}_{+}\times \mathbb{R}^{d}\rightarrow \mathbb{R}^{d}\textrm{ is a
Carath\'{e}odory function} \\
& \quad \quad \quad \textrm{(i.e. measurable w.r. to }t\textrm{ and continuous
w.r. to }x\textrm{),} \\
\left( ii\right) \; & {\displaystyle\int_{0}^{T}}\left( f^{\#}\left(
t\right) \right) ^{2}dt<\infty ,\quad \textrm{where}\quad f^{\#}\left(
t\right) =\sup_{x\in Dom\left( \varphi \right) }\left\vert f\left(
t,x\right) \right\vert.%
\end{array}
\label{osp-h4}
\end{equation}

The estimates (\ref{prop of the sol}) hold too for a solution of Eq.(\ref{ob5}), but, now, the constant $C_{T}(\left\Vert m\right\Vert _{T})$ depends also on the quantity $\int_{0}^{T}f^{\#}(t)dt$. We are now able to formulate the main result of this section.

\begin{theorem}
\label{oSP-t1} Let the assumptions (\ref{osp-h0}), (\ref{osp-h0-A}), (\ref%
{osp-h2}), (\ref{osp-h3}) and (\ref{osp-h4}) be satisfied. Then the
differential equation (\ref{ob5}) has at least one solution in the sense of
Definition \ref{def1}, i.e. $x,k:\mathbb{R}_{+}\rightarrow $ $\mathbb{R}^{d}$
are continuous functions and%
\begin{equation}
\left\{
\begin{array}{rl}
\left(j\right) \; & x\left(t\right) +\displaystyle\int_{0}^{t}H\left(x\left(r\right) \right) dk\left(r\right) =x_{0}+\displaystyle
\int_{0}^{t}f\left(r,x\left(r\right)\right) dr+m\left(t\right),\quad
\forall ~t\geq 0,\smallskip \\
\left(jj\right)\; & dk\left(r\right)\in\partial\varphi\left(x\left(r\right)\right)\left(dr\right).
\end{array}
\right.  \label{osp-eq}
\end{equation}
\end{theorem}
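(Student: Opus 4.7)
The plan is to proceed by Yosida penalization of $\partial\varphi$, exactly as suggested by the authors in the introduction. For each $\varepsilon>0$, let $\varphi_{\varepsilon}$ be the Moreau regularization of $\varphi$ and $\nabla\varphi_{\varepsilon}=\tfrac{1}{\varepsilon}(I-J_{\varepsilon})$ its (single-valued, $\tfrac{1}{\varepsilon}$-Lipschitz) gradient. I would first solve the penalized equation
$$x_{\varepsilon}(t)=x_{0}+\int_{0}^{t}\left[-H(x_{\varepsilon}(s))\nabla\varphi_{\varepsilon}(x_{\varepsilon}(s))+f(s,x_{\varepsilon}(s))\right]ds+m(t),$$
setting $k_{\varepsilon}(t):=\int_{0}^{t}\nabla\varphi_{\varepsilon}(x_{\varepsilon}(s))\,ds$. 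The change of variables $y_{\varepsilon}:=x_{\varepsilon}-m$ turns this into a standard Carath\'eodory ODE whose right-hand side is locally Lipschitz in $y$ (using $H\in C_{b}^{2}$, the Lipschitz property of $\nabla\varphi_{\varepsilon}$, and (\ref{osp-h4})), so existence and uniqueness on any $[0,T]$ are classical.

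Next I would establish estimates uniform in $\varepsilon$ of the form given in Proposition \ref{p1-apri-estim}: $\|x_{\varepsilon}\|_{T}+\updownarrow\!k_{\varepsilon}\!\updownarrow_{T}\le C_{T}(\|m\|_{T})$, together with the $\sqrt{t-s+\mathbf{m}_{m}(t-s)}$-type modulus. The idea is to mimic the computation used in the Annex (Lemma \ref{oSP-l4-compact}) at the penalized level, exploiting symmetry and uniform ellipticity of $H$ and the sublinear growth (\ref{osp-h3}-(iii)), which gives an $L^{1}$-type control on $|\nabla\varphi_{\varepsilon}(x_{\varepsilon})|$ via the standard Moreau–Yosida inequality $\varphi_{\varepsilon}(x)\le\varphi(J_{\varepsilon}x)+\tfrac{\varepsilon}{2}|\nabla\varphi_{\varepsilon}(x)|^{2}$ and the interior condition (\ref{osp-h3}-(ii)). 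Arzel\`a–Ascoli then yields a subsequence (still indexed by $\varepsilon$) with $x_{\varepsilon}\to x$ uniformly on compacts and $k_{\varepsilon}\to k$ uniformly with $k\in BV_{loc}$; continuity of $H$ and dominated convergence let me pass to the limit in (\ref{osp-eq}-$(j)$) to obtain the integral identity for $(x,k)$.

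The subdifferential inclusion (\ref{osp-eq}-$(jj)$) is then recovered from $\nabla\varphi_{\varepsilon}(x_{\varepsilon})\in\partial\varphi(J_{\varepsilon}x_{\varepsilon})$ combined with $|J_{\varepsilon}x_{\varepsilon}-x_{\varepsilon}|\le\varepsilon|\nabla\varphi_{\varepsilon}(x_{\varepsilon})|\to 0$, so $J_{\varepsilon}x_{\varepsilon}\to x$ as well; writing the monotonicity inequality
$$\int_{s}^{t}\!\langle y(r)-J_{\varepsilon}x_{\varepsilon}(r),dk_{\varepsilon}(r)\rangle+\int_{s}^{t}\!\varphi(J_{\varepsilon}x_{\varepsilon}(r))\,dr\le\int_{s}^{t}\!\varphi(y(r))\,dr$$
for any test function $y\in C([0,T];\mathbb{R}^{d})$ and passing to the limit using lower semicontinuity of $\varphi$ delivers the condition in Definition \ref{def1}. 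As an alternative to keeping $f$ inside the Yosida scheme, one may first handle the case $f=0$ and then treat $f$ by a Schauder fixed-point argument on $C([0,T];\mathbb{R}^{d})$: given $y\in C$, solve the Skorohod problem with input $m+\int_{0}^{\cdot}f(s,y(s))\,ds$ and use the stability of Proposition \ref{p1-apri-estim} in $\|m\|_{T}$ to verify continuity and compactness of the fixed-point map.

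The main obstacle is the uniform a priori estimate. Because $x\mapsto H(x)\partial\varphi(x)$ is \emph{not} monotone (the symmetric positive-definiteness of $H$ only guarantees monotonicity at a fixed point, not under variations of $x$), the standard Lyapunov identity for maximal monotone operators is unavailable. Testing the penalized equation with $\nabla\varphi_{\varepsilon}(x_{\varepsilon})$ produces the good coercive term $\langle H\nabla\varphi_{\varepsilon},\nabla\varphi_{\varepsilon}\rangle\ge c^{-1}|\nabla\varphi_{\varepsilon}|^{2}$ but leaves an uncontrolled $\langle\nabla\varphi_{\varepsilon},dm\rangle$-type contribution since $m$ is only continuous; absorbing it requires mollifying $m$, using assumption (\ref{osp-h3}-(iii)) to bound $\varphi_{\varepsilon}(x_{\varepsilon})$ below and above, and pushing the regularization parameter to zero with a quantitative dependence on $\mathbf{m}_{m}$. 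This is precisely the delicate technical point the authors have isolated in the Annex lemmas, and any proof of Theorem \ref{oSP-t1} has to invoke or re-derive such an estimate before compactness can be applied.
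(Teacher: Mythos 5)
Your overall strategy (Yosida penalization, uniform estimates, Arzel\`a--Ascoli, recovery of the inclusion by lower semicontinuity) is the right one and matches the paper's, but the proposal has two genuine gaps, both located exactly where you stop and say ``this is the delicate point.''

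First, the well-posedness of your penalized equation is not ``classical.'' Under the hypotheses of Theorem \ref{oSP-t1} the drift $f$ is only Carath\'eodory (assumption (\ref{osp-h4})); the Lipschitz condition (\ref{osp-h5}) is introduced only later, for uniqueness. So the right-hand side of your penalized ODE is \emph{not} locally Lipschitz in $y$, and existence-uniqueness on $[0,T]$ does not follow from the standard theorem. Moreover, writing $f(s,x_{\varepsilon}(s))$ with $x_{\varepsilon}(s)$ possibly outside $\overline{Dom(\varphi)}$ means the bound $f^{\#}$ does not control this term. The paper avoids both problems at once by inserting a time delay and a projection, i.e.\ using $f\left(s-\varepsilon,\pi_{D}\left(x_{\varepsilon}\left(s-\varepsilon\right)\right)\right)$: on each interval $[i\varepsilon,(i+1)\varepsilon]$ the $f$-term is then a known function of $s$, only the locally Lipschitz map $x\mapsto H(x)\nabla\varphi_{\varepsilon}(x)$ matters, and $|f(s-\varepsilon,\pi_{D}(\cdot))|\le f^{\#}(s-\varepsilon)$. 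You need some such device.

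Second, and more seriously, you never actually obtain the uniform estimate for the penalized scheme with a merely continuous $m$ --- you correctly observe that testing with $\nabla\varphi_{\varepsilon}(x_{\varepsilon})$ leaves an uncontrollable $\langle\nabla\varphi_{\varepsilon},dm\rangle$ term, and then defer the resolution. The paper's resolution is structural, not a refinement of your single-parameter scheme: it is a \emph{two-stage} limit. In Step 1 one assumes $m\in C^{1}$, so $m'\in L^{2}_{loc}$ and the quadratic absorption $\langle\nabla\varphi_{\varepsilon},m'\rangle\le\frac{1}{4c}|\nabla\varphi_{\varepsilon}|^{2}+c|m'|^{2}$ closes the Gronwall estimate and produces an exact solution of the Skorohod problem. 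In Step 2 one mollifies $m$ into $m_{\varepsilon}\in C^{1}$ and approximates by \emph{exact} solutions $(x_{\varepsilon},k_{\varepsilon})\in\mathcal{SP}(H\partial\varphi;x_{0},M_{\varepsilon})$; the uniform bounds and equicontinuity now come from Lemma \ref{oSP-l4-compact}, whose constants depend only on $\left\Vert m\right\Vert_{T}$ and $\mathbf{m}_{m}$, not on any derivative of $m$. Those Annex estimates are proved using the subdifferential inequality and test functions of the form $x(r)+v_{y}+\rho_{0}\alpha(r)$ with $x(r)\in D$; they do not transfer verbatim to the penalized trajectories (which need not lie in $D$), so ``mimicking the computation at the penalized level'' is not available as stated. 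Your Schauder alternative for $f$ has a similar problem: continuity of the map $m\mapsto x$ is only established in the paper for $m\in BV_{loc}$ (estimate (\ref{ob8})), so the fixed-point map's continuity on $C([0,T];\mathbb{R}^{d})$ is not justified. To complete the proof you must either reproduce the paper's two-stage architecture or supply a new uniform estimate that you have not provided.
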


\begin{proof}[{\bf Proof}]
We will divide the proof in two separate steps. First we will analyze the case of the regular function $m$ and, in the sequel, we consider the situation of the singular input $m$.\newline

\noindent{\bf{\it{Step 1.}} {\it Case}} $m\in C^{1}\left(\mathbb{R}_{+};
\mathbb{R}^{d}\right)$\medskip

It is sufficient to prove the existence of a solution on an interval $\left[0,T\right]$ arbitrary, fixed.

\noindent Let $n\in \mathbb{N}^{\ast }$, $n\geq T$, fixed, consider $\varepsilon =\frac{T}{n}$ and the extensions $f\left(s,x\right)=0$ and $m\left(s\right)=s \cdot m^{\prime}\left(0+\right)$ for $s<0$. Based on the notations from Annex 4.2., we consider the penalized problem
$$\begin{array}{l}
x_{\varepsilon}\left(t\right)=x_{0},\quad \textrm{if\ }t<0,\medskip \\
\multicolumn{1}{r}{x_{\varepsilon}\left(t\right)+\displaystyle
\int_{0}^{t}H\left( x_{\varepsilon}\left(s\right)\right) dk_{\varepsilon
}\left( s\right)=x_{0}+\displaystyle\int_{0}^{t}\left[ f\left(s-\varepsilon,\mathbb{\pi }_{D}\left(x_{\varepsilon}\left(s-\varepsilon
\right)\right)\right)+m^{\prime }\left(s-\varepsilon\right)\right]ds,}
\\ \multicolumn{1}{r}{\;t\in \left[0,T\right],}
\end{array}$$
or, equivalent,
\begin{equation}
\begin{array}{l}
x_{\varepsilon}\left(t\right)=x_{0},\quad \textrm{if\ }t<0,\medskip \\
x_{\varepsilon}\left(t\right)+\displaystyle\int_{0}^{t}H\left(x_{\varepsilon}\left(s\right)\right)\nabla \varphi
_{\varepsilon}\left(x_{\varepsilon}\left(s\right)\right)ds
=x_{0}+\displaystyle\int_{-\varepsilon}^{t-\varepsilon}\left[f\left(s,
\mathbb{\pi}_{D}\left(x_{\varepsilon}\left(s\right)\right)\right)
+m^{\prime}\left(s\right)\right]ds,\;\;t\in \left[0,T\right],
\end{array}
\label{ea-inte}
\end{equation}
where
\[
k_{\varepsilon }(t)=\int_{0}^{t}\nabla \varphi _{\varepsilon}(x_{\varepsilon }(s))ds
\]
and $\pi _{D}(x)$ is the projection of $x$ on the set $D=\overline{Dom(\varphi )}=Dom(\varphi )$, uniquely defined by $\pi _{D}(x)\in \mathcal{D}$ and $dist(x,D)=|x-\pi_{D}(x)|$.

Since $x\longmapsto H\left(x\right)\nabla\varphi_{\varepsilon}\left(x\right):\mathbb{R}^{d}\rightarrow\mathbb{R}^{d}$ is a sublinear and
locally Lipschitz continuous function and, for $s\leq t-\varepsilon$,
$$\left\vert f\left(s,\mathbb{\pi}_{D}\left(x_{\varepsilon}\left(s\right)
\right)\right) \right\vert \leq f^{\#}\left(s\right),$$
then, recursively, on the intervals $[i\varepsilon,\left(i+1\right)
\varepsilon]$ the approximating equation admits a unique solution $x_{\varepsilon}\in C\left(\left[0,T\right];\mathbb{R}^{d}\right)$. The regularity of the function $x\mapsto \left\vert x-a\right\vert
^{2}+\varphi _{\varepsilon }(x)$ and the definition of the approximating sequence $\{x_{\varepsilon }\}_{\varepsilon }$ implies that, for $u_{0}\in
Dom(\varphi )$, we have
\begin{equation}
\begin{array}{l}
\left\vert x_{\varepsilon}\left(t\right)-u_{0}\right\vert^{2}+\varphi_{\varepsilon}\left( x_{\varepsilon}\left(t\right)\right)\smallskip \\
\quad \quad \quad \quad +\displaystyle\int_{0}^{t}\left\langle H\left(x_{\varepsilon}\left(s\right)\right)\nabla \varphi_{\varepsilon}\left(x_{\varepsilon}\left(s\right)\right),2\left[x_{\varepsilon}\left(s\right)-u_{0}\right]+
\nabla\varphi_{\varepsilon}\left(x_{\varepsilon}\left(s\right)\right)\right\rangle ds\smallskip \\
\quad =\left\vert x_{0}-u_{0}\right\vert ^{2}+\varphi_{\varepsilon}\left(x_{0}\right)\smallskip \\
+\displaystyle\int_{0}^{t}\left\langle 2\left[ x_{\varepsilon}\left(s\right)-u_{0}\right] +\nabla\varphi_{\varepsilon }\left(x_{\varepsilon}\left(s\right)\right),f\left(s-\varepsilon,\mathbb{\pi}_{D}\left(x_{\varepsilon}
\left(s-\varepsilon\right)\right)\right)+m^{\prime}\left(s-\varepsilon\right)ds\right\rangle.
\end{array}
\label{ea-ini}
\end{equation}

Let consider an arbitrary fixed pair $\left(u_{0},\hat{u}_{0}\right)\in\partial \varphi$. Since $\nabla\varphi_{\varepsilon}\left(u_{0}\right)=\partial\varphi_{\varepsilon}
\left(u_{0}\right)$, then it is easy to verify, from the definition of the subdifferential operator, that
$$\left\vert \varphi_{\varepsilon}\left(x_{\varepsilon}\right)-\varphi
_{\varepsilon }\left(u_{0}\right)\right\vert +\varphi_{\varepsilon
}\left(u_{0}\right)-2\left\vert\nabla \varphi_{\varepsilon}\left(u_{0}\right)\right\vert \left\vert x_{\varepsilon }-u_{0}\right\vert \leq
\varphi_{\varepsilon}\left( x_{\varepsilon }\right).$$

\noindent Also, since $\nabla \varphi _{\varepsilon }(u_{0})\in \partial \varphi
(J_{\varepsilon }(u_{0}))$, where $J_{\varepsilon }(x)=x-\varepsilon \nabla
\varphi _{\varepsilon }(x)$, then%
\[
\left\langle \hat{u}_{0}-\nabla \varphi _{\varepsilon
}(u_{0}),u_{0}-(u_{0}-\varepsilon \nabla \varphi _{\varepsilon
}(u_{0})\right\rangle \geq 0,
\]%
which yields, after short computations, $|\nabla \varphi _{\varepsilon
}(u_{0})|\leq |\hat{u}_{0}|$. Moreover,%
\begin{eqnarray*}
-\varepsilon |\hat{u}_{0}|^{2} &\leq &-\varepsilon \left\langle \hat{u}%
_{0},\nabla \varphi _{\varepsilon }(u_{0})\right\rangle =\left\langle \hat{u}%
_{0},J_{\varepsilon }(u_{0})-u_{0}\right\rangle  \\
&\leq &\varphi (J_{\varepsilon }(u_{0}))-\varphi (u_{0}) \\
&\leq &\varphi _{\varepsilon }(u_{0})-\varphi (u_{0})\leq 0.
\end{eqnarray*}%
Due to%
\[
\varphi _{\varepsilon }(x_{\varepsilon }(t))\geq |\varphi _{\varepsilon
}(x_{\varepsilon }(t))-\varphi _{\varepsilon }(u_{0})|+\varphi (u_{0})-|\hat{%
u}_{0}|^{2}-2|\hat{u}_{0}||x_{\varepsilon }(t)-u_{0}|,
\]
from Eq.(\ref{ea-ini}) we obtain

\begin{equation}
\begin{array}{l}
\left\vert x_{\varepsilon }\left( t\right) -u_{0}\right\vert ^{2}+\left\vert
\varphi _{\varepsilon }\left( x_{\varepsilon }\left( t\right) \right)
-\varphi _{\varepsilon }\left( u_{0}\right) \right\vert \smallskip \\
\quad \quad \quad \quad +\displaystyle\int_{0}^{t}\left\langle H\left(
x_{\varepsilon }\left( s\right) \right) \nabla \varphi _{\varepsilon }\left(
x_{\varepsilon }\left( s\right) \right) ,2\left[ x_{\varepsilon }\left(
s\right) -u_{0}\right] +\nabla \varphi _{\varepsilon }\left( x_{\varepsilon
}\left( s\right) \right) \right\rangle ds\smallskip \\
\quad \leq \left\vert x_{0}-u_{0}\right\vert ^{2}+\varphi \left(
x_{0}\right) -\varphi \left( u_{0}\right) +\left\vert \hat{u}_{0}\right\vert
^{2}+2\left\vert \hat{u}_{0}\right\vert \left\vert x_{\varepsilon }\left(
t\right) -u_{0}\right\vert \smallskip \\
+\displaystyle\int_{0}^{t}\left\langle 2\left[ x_{\varepsilon }\left(
s\right) -u_{0}\right] +\nabla \varphi _{\varepsilon }\left( x_{\varepsilon
}\left( s\right) \right) ,f\left( s-\varepsilon,\mathbb{\pi }_{D}\left(
x_{\varepsilon }\left( s-\varepsilon \right) \right) \right)+m^{\prime
}\left( s-\varepsilon \right) \right\rangle ds.%
\end{array}
\label{ea-ini2}
\end{equation}%
Denoting by $C$ a generic constant independent of $\varepsilon$ ($C$
depends only of $c$ and $u_{0}$), the following estimates hold (to be
shortened we omit the argument $s$, writing $x_{\varepsilon}$ in the place
of $x_{\varepsilon }\left( s\right) $):\medskip

\begin{itemize}
\item $\dfrac{1}{c}\left\vert \nabla\varphi_{\varepsilon}\left(
x_{\varepsilon}\right) \right\vert ^{2}\leq\left\langle H\left(
x_{\varepsilon}\right) \nabla\varphi_{\varepsilon}\left( x_{\varepsilon
}\right) ,\nabla\varphi_{\varepsilon}\left( x_{\varepsilon}\right)
\right\rangle,$

\item\begin{align*}
\left\langle H\left( x_{\varepsilon }\right) \nabla \varphi _{\varepsilon
}\left( x_{\varepsilon }\right) ,2\left( x_{\varepsilon }-u_{0}\right)
\right\rangle & \geq -2\left\vert x_{\varepsilon }-u_{0}\right\vert
\left\vert H\left( x_{\varepsilon }\right) \nabla \varphi _{\varepsilon
}\left( x_{\varepsilon }\right) \right\vert \\
& \geq -2c\left\vert x_{\varepsilon }-u_{0}\right\vert \left\vert \nabla
\varphi _{\varepsilon }\left( x_{\varepsilon }\right) \right\vert \\
& \geq -C\sup_{r\leq s}\left\vert x_{\varepsilon }\left( r\right)
-u_{0}\right\vert ^{2}-\dfrac{1}{4c}\left\vert \nabla \varphi _{\varepsilon
}\left( x_{\varepsilon }\right) \right\vert ^{2},
\end{align*}

\item $2\left\vert \hat{u}_{0}\right\vert \left\vert x_{\varepsilon }\left(
t\right) -u_{0}\right\vert \leq \dfrac{1}{2}\sup\limits_{r\leq t}\left\vert
x_{\varepsilon }\left( r\right) -u_{0}\right\vert ^{2}+2\left\vert \hat{u}%
_{0}\right\vert ^{2},$

\item
\begin{align*}
& \left\langle 2\left( x_{\varepsilon }\left( s\right) -u_{0}\right) +\nabla
\varphi _{\varepsilon }\left( x_{\varepsilon }\left( s\right) \right),f\left(s-\varepsilon,\mathbb{\pi}_{D}\left(x_{\varepsilon}\left(
s-\varepsilon \right) \right) \right) +m^{\prime }\left( s-\varepsilon
\right) \right\rangle \\
& \leq \dfrac{1}{8c}\left\vert 2\left( x_{\varepsilon }\left( s\right)
-u_{0}\right) +\nabla \varphi _{\varepsilon }\left( x_{\varepsilon }\right)
\right\vert ^{2}+2c\left\vert f\left( s-\varepsilon ,\mathbb{\pi }_{D}\left(
x_{\varepsilon }\left( s-\varepsilon \right) \right) \right) +m^{\prime
}\left( s-\varepsilon \right) \right\vert ^{2} \\
& \leq \dfrac{1}{4c}\left\vert \nabla \varphi _{\varepsilon }\left(
x_{\varepsilon }\left( s\right) \right) \right\vert ^{2}+\dfrac{1}{c}%
\left\vert x_{\varepsilon }\left( s\right) -u_{0}\right\vert ^{2}+4c\left[
(f^{\#}\left( s-\varepsilon \right) )^{2}+\left\vert m^{\prime }\left(
s-\varepsilon \right) \right\vert ^{2}\right] .
\end{align*}
\end{itemize}

\noindent Using the above estimates in (\ref{ea-ini2}) we infer%
$$
\begin{array}{c}
\left\vert x_{\varepsilon }\left( t\right) -u_{0}\right\vert ^{2}+\left\vert
\varphi _{\varepsilon }\left( x_{\varepsilon }\left( t\right) \right)
-\varphi _{\varepsilon }\left( u_{0}\right) \right\vert +\dfrac{1}{2c}%
\displaystyle\int_{0}^{t}\left\vert \nabla \varphi _{\varepsilon }\left(
x_{\varepsilon }\left( r\right) \right) \right\vert ^{2}dr\smallskip \\
\leq \left\vert x_{0}-u_{0}\right\vert ^{2}+\varphi \left( x_{0}\right) -\varphi
\left( u_{0}\right) +3\left\vert \hat{u}_{0}\right\vert ^{2}\smallskip \\
+\dfrac{1}{2}\sup\limits_{\theta \leq t}\left\vert x_{\varepsilon }\left(
\theta \right) -u_{0}\right\vert ^{2}+4c\displaystyle\int_{0}^{t}\left[
\left( f^{\#}\left( r-\varepsilon \right) \right) ^{2}+\left\vert m^{\prime
}\left( r-\varepsilon \right) \right\vert ^{2}\right] dr\smallskip \\
+C\displaystyle\int_{0}^{t}\sup\limits_{\theta \leq r}\left\vert
x_{\varepsilon }\left( \theta \right) -u_{0}\right\vert ^{2}dr.%
\end{array}%
$$
We write the inequality for $s\in \left[ 0,t\right] $ and then we take the $%
\sup_{s\leq t}$. Hence
$$
\begin{array}{l}
\left\Vert x_{\varepsilon }-u_{0}\right\Vert _{t}^{2}+\sup\limits_{s\leq
t}\left\vert \varphi _{\varepsilon }\left( x_{\varepsilon }\left( s\right)
\right) -\varphi _{\varepsilon }\left( u_{0}\right) \right\vert +%
\displaystyle\int_{0}^{t}\left\vert \nabla \varphi _{\varepsilon }\left(
x_{\varepsilon }\left( r\right) \right) \right\vert ^{2}dr\smallskip \\
\leq 2\left[ \left\vert x_{0}-u_{0}\right\vert ^{2}+\varphi \left(
x_{0}\right) -\varphi \left( u_{0}\right) +\left\vert \hat{u}_{0}\right\vert
^{2}\right] \smallskip \\
\quad +8c\displaystyle\int_{-1}^{t}\left[ \left( f^{\#}\left( r\right)
\right) ^{2}+\left\vert m^{\prime }\left( r\right) \right\vert ^{2}\right]
dr+C\displaystyle\int_{0}^{t}\left\Vert x_{\varepsilon }-u_{0}\right\Vert
_{r}^{2}dr.%
\end{array}%
$$
By the Gronwall inequality we have%
$$
\left\Vert x_{\varepsilon }-u_{0}\right\Vert _{t}^{2}\leq Ce^{Ct}\left[ %
\left[ \left\vert x_{0}-u_{0}\right\vert ^{2}+\varphi \left( x_{0}\right)
-\varphi \left( u_{0}\right) +\left\vert \hat{u}_{0}\right\vert ^{2}\right] +%
\displaystyle\int_{-1}^{t}\left[ (f^{\#}\left( r\right) )^{2}+\left\vert
m^{\prime }\left( r\right) \right\vert ^{2}\right] dr\right].
$$
Hence, there exists a constant $C_{T},$ independent of $\varepsilon,$ such
that%
\begin{equation}
\sup_{t\in \left[ 0,T\right] }\left\vert x_{\varepsilon }\left( t\right)
\right\vert ^{2}+\sup_{t\in \left[ 0,T\right] }\left\vert \varphi
_{\varepsilon }\left( x_{\varepsilon }\left( t\right) \right) \right\vert +%
\displaystyle\int_{0}^{T}\left\vert \nabla \varphi _{\varepsilon }\left(
x_{\varepsilon }\left( s\right) \right) \right\vert ^{2}ds\leq C_{T}~.
\label{est1}
\end{equation}%
Since $\nabla \varphi _{\varepsilon }\left( x\right) =\dfrac{1}{\varepsilon }%
\left( x-J_{\varepsilon }x\right),$ then, we also obtain%
\begin{equation}
\displaystyle\int_{0}^{T}\left\vert x_{\varepsilon }\left( s\right)
-J_{\varepsilon }\left( x_{\varepsilon }\left( s\right) \right) \right\vert
^{2}ds\leq \varepsilon C_{T}.  \label{est11}
\end{equation}%
From the approximating equation, for all $0\leq s\leq t\leq T,$ we have%
\begin{align*}
& \left\vert x_{\varepsilon }\left( t\right) -x_{\varepsilon }\left(
s\right) \right\vert \\
& \leq \left\vert \displaystyle\int_{s}^{t}H\left( x_{\varepsilon }\left(
r\right) \right) \nabla \varphi _{\varepsilon }\left( x_{\varepsilon }\left(
r\right) \right) dr\right\vert +\left\vert \displaystyle\int_{s-\varepsilon
}^{t-\varepsilon }f\left( r,\mathbb{\pi }_{D}\left( x_{\varepsilon }\left(
r\right) \right) \right) dr\right\vert \\
& +\left\vert m\left( t-\varepsilon \right) -m\left( s-\varepsilon \right)
\right\vert \\
& \leq c\displaystyle\int_{s}^{t}\left\vert \nabla \varphi _{\varepsilon
}\left( x_{\varepsilon }\left( r\right) \right) \right\vert dr+\displaystyle%
\int_{s-\varepsilon }^{t-\varepsilon }f^{\#}\left( r\right) dr+\mathbf{m}%
_{m}\left( t-s\right) \\
& \leq c\sqrt{t-s}\left( \displaystyle\int_{s}^{t}\left\vert \nabla \varphi
_{\varepsilon }\left( x_{\varepsilon }\left( r\right) \right) \right\vert
^{2}dr\right) ^{1/2}+\sqrt{t-s}\left( \displaystyle\int_{s-\varepsilon
}^{t-\varepsilon }(f^{\#}\left( r\right) )^{2}dr\right) ^{1/2}+\mathbf{m}%
_{m}\left( t-s\right) \\
& \leq C_{T}^{\prime }\left[ \sqrt{t-s}+\mathbf{m}_{m}\left( t-s\right) %
\right].
\end{align*}%
In fact, moreover we have%
\begin{align*}
\left\updownarrow x_{\varepsilon }\right\updownarrow _{\left[ s,t\right] }&
\leq \displaystyle\int_{s}^{t}\left\vert H\left( x_{\varepsilon }\left(
r\right) \right) \nabla \varphi _{\varepsilon }\left( x_{\varepsilon }\left(
r\right) \right) \right\vert dr+\displaystyle\int_{s-\varepsilon
}^{t-\varepsilon }\left\vert f\left( r,\mathbb{\pi }_{D}\left(
x_{\varepsilon }\left( r\right) \right) \right) \right\vert dr+\displaystyle%
\int_{s-\varepsilon }^{t-\varepsilon }\left\vert m^{\prime }\left( r\right)
\right\vert dr \\
& \leq C_{T}\sqrt{t-s}.
\end{align*}%
Hence $\left\{ x_{\varepsilon}:\varepsilon \in (0,1]\right\}$ is a bounded
and uniformly equicontinuous subset of $C\left( \left[ 0,T\right];\mathbb{R}%
^{d}\right)$. From Ascoli-Arzel\`{a}'s theorem it follows that there exists
$\varepsilon_{n}\rightarrow 0$ and $x\in C\left( \left[ 0,T\right];\mathbb{%
R}^{d}\right)$ such that%
$$
\lim_{n\rightarrow \infty }\left[ \sup_{t\in \left[ 0,T\right] }\left\vert
x_{\varepsilon _{n}}\left( t\right) -x\left( t\right) \right\vert \right]=0.
$$

By (\ref{est11}), there exists $h\in L^{2}\left( 0,T;\mathbb{R}^{d}\right) $
such that, on a subsequence, denoted also $\varepsilon_{n},$ we have
$$
J_{\varepsilon _{n}}\left( x_{\varepsilon _{n}}\right) \rightarrow x\quad
\textrm{in }L^{2}(0,T;\mathbb{R}^{d})\textrm{ and }a.e.\textrm{ in }\left[ 0,T%
\right],\quad \textrm{as }\varepsilon _{n}\rightarrow 0
$$ and
$$
\nabla \varphi_{\varepsilon _{n}} \left( x_{\varepsilon _{n}}\right) \rightharpoonup h,\quad
\textrm{weakly in }L^{2}(0,T;\mathbb{R}^{d}).
$$
Therefore, for all $t\in \left[ 0,T\right] $,%
\begin{equation}
\lim_{n\rightarrow \infty }\int_{0}^{t}H(x_{\varepsilon _{n}}(s))\nabla
\varphi _{\varepsilon _{n}}(x_{\varepsilon
_{n}}(s))ds=\int_{0}^{t}H(x(s))h(s)ds. \label{limhh}
\end{equation}
The lower semicontinuity property of $\varphi$ yields, a.e. $t\in \left[ 0,T%
\right],$%
$$
\varphi \left( x\left( t\right) \right) \leq \liminf_{n\rightarrow +\infty
}\varphi \left( J_{\varepsilon _{n}}\left( x_{\varepsilon _{n}}\left(
t\right) \right) \right) \leq \liminf_{n\rightarrow +\infty }\varphi
_{\varepsilon _{n}}\left( x_{\varepsilon _{n}}\left( t\right) \right) \leq
C_{T}~.
$$
Since $\nabla \varphi_{\varepsilon} \left( x_{\varepsilon }\right) \in \partial \varphi
\left( J_{\varepsilon }\left( x_{\varepsilon }\right) \right),$ then for
all $y\in C\left( \left[ 0,T\right];\mathbb{R}^{d}\right),$
$$
\displaystyle\int_{s}^{t}\left\langle \nabla \varphi_{\varepsilon} \left( x_{\varepsilon
}\left( r\right) \right) ,y\left( r\right) -J_{\varepsilon }(x_{\varepsilon }(r))
\right\rangle dr+\displaystyle\int_{s}^{t}\varphi \left( J_{\varepsilon
}\left( x_{\varepsilon }\left( r\right) \right) \right) dr\leq \displaystyle%
\int_{s}^{t}\varphi \left( y\left( r\right) \right) dr;
$$
passing to $\liminf_{\varepsilon _{n}\rightarrow 0}$ we obtain%
$$
\displaystyle\int_{s}^{t}\left\langle h\left( r\right),y\left( r\right)
-x\left( r\right) \right\rangle dr+\displaystyle\int_{s}^{t}\varphi \left(
x\left( r\right) \right) dr\leq \displaystyle\int_{s}^{t}\varphi \left(
y\left( r\right) \right) dr,
$$
for all $0\leq s\leq t\leq T$ and $y\in C\left( \left[ 0,T\right] ;\mathbb{R}%
^{d}\right)$, that is $h\left( r\right) \in \partial \varphi \left( x\left(
r\right) \right) $ a.e. $t\in \left[0,T\right].\smallskip$

Finally, taking into account (\ref{limhh}), by passing to limit for $\varepsilon =\varepsilon_{n}\rightarrow 0$ in the approximating equation (\ref{ea-inte}), via the Lebesgue dominated convergence theorem for the integral from the right-hand side, we get
$$x\left(t\right)+\displaystyle\int_{0}^{t}H\left(x\left(s\right)\right)
dk\left(s\right)=x_{0}+\displaystyle\int_{0}^{t}f\left(s,x\left(s\right)
\right)ds+m\left(t\right),$$
where
$$k\left( t\right)=\displaystyle\int_{0}^{t}h\left(s\right)ds.$$

\noindent {\it{Step 2.}} {\it{Case}} $m\in C\left(\left[0,T\right];\mathbb{R}^{d}\right).$\medskip

Let extend again $m\left(s\right)=0$ for $s\leq 0$ and define
$$m_{\varepsilon}\left(t\right)=\dfrac{1}{\varepsilon}\displaystyle
\int_{t-\varepsilon}^{t}m\left(s\right) ds=\dfrac{1}{\varepsilon}
\displaystyle\int_{0}^{\varepsilon }m\left( t+r-\varepsilon\right)dr,$$
We have
$$m_{\varepsilon}\in C^{1}(\left[0,T\right];\mathbb{R}^{d}),\quad
\left\Vert m_{\varepsilon}\right\Vert _{T}\leq \left\Vert m\right\Vert
_{T}\quad \textrm{and}\quad \mathbf{m}_{m_{\varepsilon}}\left(\delta \right)
\leq \mathbf{m}_{m}\left(\delta \right).
$$
Let $(x_{\varepsilon },k_{\varepsilon })$ be a solution of the approximating equation
$$
\left\{
\begin{array}{l}
x_{\varepsilon}\left(t\right)+\displaystyle\int_{0}^{t}H\left(x_{\varepsilon}\left(r\right)\right)dk_{\varepsilon}\left(r\right)
=x_{0}+\displaystyle\int_{0}^{t}f\left(r,x_{\varepsilon}\left(r\right)
\right)dr+m_{\varepsilon}\left(t\right),\;t\geq 0,\smallskip \\
dk_{\varepsilon}\left(r\right)\in\partial \varphi \left(x_{\varepsilon}\left(r\right)\right)\left(dr\right),
\end{array}
\right.
$$
solution which exists according to the first step of the proof. We have
$$
k_{\varepsilon}\left(t\right)=\displaystyle\int_{0}^{t}h_{\varepsilon}\left(s\right)ds,\quad h_{\varepsilon}\in L^{2}(0,T;\mathbb{R}^{d}),
$$
and
\begin{equation}
\displaystyle\int_{s}^{t}\left\langle y\left(r\right)-x_{\varepsilon}\left(r\right),dk_{\varepsilon}\left(r\right)\right\rangle+
\displaystyle\int_{s}^{t}\varphi\left(x_{\varepsilon}\left(r\right)
\right)dr\leq\displaystyle\int_{s}^{t}\varphi\left(y\left(r\right)
\right)dr,  \label{ob-7}
\end{equation}
for all $0\leq s\leq t\leq T$ and $y\in C\left(\left[0,T\right];\mathbb{R}^{d}\right).\smallskip$

From Lemma \ref{oSP-l4-compact}, with $m$ replaced by
$$M_{\varepsilon }\left(t\right)=\displaystyle\int_{0}^{t}f\left(r,x_{\varepsilon}\left(r\right)\right) dr+m_{\varepsilon }\left( t\right),$$
we have%
\begin{align*}
\left\Vert x_{\varepsilon}\right\Vert _{T}+\left\updownarrow k_{\varepsilon
}\right\updownarrow _{T}& \leq C_{T}\left( \left\Vert M_{\varepsilon
}\right\Vert _{T}\right) \quad \textrm{and} \\
\left\vert x_{\varepsilon }\left( t\right)-x_{\varepsilon }\left( s\right)
\right\vert +\left\updownarrow k_{\varepsilon }\right\updownarrow
_{t}-\left\updownarrow k_{\varepsilon }\right\updownarrow _{s}& \leq
C_{T}\left( \left\Vert M_{\varepsilon }\right\Vert_{T}\right) \times \sqrt{%
\mathbf{\mu}_{M_{\varepsilon }}\left(t-s\right)},
\end{align*}%
where, for $\delta >0$, $\mu _{g}(\delta )\overset{def}{=}\delta +\mathbf{m}_{g}(\delta )$ and $\mathbf{m}_{g}$ is the modulus of continuity of the continuous function
$g:\left[ 0,T\right] \rightarrow \mathbb{R}^{d}$ (for more details see Annex 4.1.). Since, for all $0\leq s\leq t\leq T,$%
\begin{align*}
\mathbf{\mu }_{M_{\varepsilon }}\left( t-s\right) & \leq t-s+\sqrt{t-s}%
\displaystyle\int_{0}^{T}(f^{\#}\left( r\right) )^{2}dr+\mathbf{m}_{m}\left(
t-s\right) \overset{def}{=}\gamma \left( t-s\right) \quad \textrm{and} \\
\left\Vert M_{\varepsilon }\right\Vert _{T}& =\mathbf{m}_{M_{\varepsilon
}}\left( T\right) \leq \displaystyle\int_{0}^{T}f^{\#}\left( r\right)
dr+\left\Vert m\right\Vert _{T}\overset{def}{=}\gamma_{T},
\end{align*}%
then there exist the positive constants $C_{T}(\gamma_{T})$ and $\tilde{C}_{T}(\gamma_{T})$ such that%
\begin{align*}
\left\Vert x_{\varepsilon }\right\Vert _{T}+\left\updownarrow k_{\varepsilon
}\right\updownarrow _{T}& \leq C_{T}\left(\gamma_{T}\right) \quad \textrm{and} \\
\mathbf{m}_{x_{\varepsilon }}\left( t-s\right) +\left\updownarrow
k_{\varepsilon }\right\updownarrow _{t}-\left\updownarrow k_{\varepsilon
}\right\updownarrow _{s}& \leq \tilde{C}_{T}\left(\gamma_{T}\right) \times \sqrt{%
\gamma \left( t-s\right)}.
\end{align*}%
By Ascoli-Arzel\`{a}'s theorem it follows that there exists $\varepsilon
_{n}\rightarrow 0$ and $x,k\in C\left( \left[ 0,T\right];\mathbb{R}%
^{d}\right) $ such that%
$$
x_{\varepsilon _{n}}\rightarrow x\quad \textrm{and}\quad \textrm{ }%
k_{\varepsilon _{n}}\rightarrow k\quad \textrm{in }C(\left[ 0,T\right];%
\mathbb{R}^{d}).
$$
Moreover, since $\left\updownarrow \cdot \right\updownarrow:C\left( \left[
0,T\right];\mathbb{R}^{d}\right) \rightarrow \mathbb{R}$ is a lower
semicontinuous function, then%
$$
\left\updownarrow k\right\updownarrow _{T}\leq \liminf_{n\rightarrow +\infty
}\left\updownarrow k_{\varepsilon _{n}}\right\updownarrow _{T}\leq C_{T,m}~.
$$
By Helly-Bray theorem, we can pass to the limit and we have, for all $0\leq s\leq t\leq T$,%
\[
\lim_{n\rightarrow \infty }\int_{s}^{t}\left\langle y(r)-x_{\varepsilon_{n}}(r),dk_{\varepsilon _{n}}(r)\right\rangle =\int_{s}^{t}\left\langle y(r)-x(r),dk(r)\right\rangle
\]
Passing now to $\liminf_{n\rightarrow +\infty }$ in (\ref{ob-7}) we infer $%
dk\left( r\right) \in \partial \varphi \left( x\left( r\right) \right)
\left( dr\right)$. Finally, taking $\lim_{n\rightarrow \infty }$ in the
approximating equation we obtain that $\left( x,k\right)$ is a solution of
the equation (\ref{osp-eq}). The proof is now complete.\hfill
\end{proof}

In the next step we will show in which additional conditions the equation (\ref{ob5}) admits a unique solution.\medskip

\begin{proposition}
\label{oSP-p1-uniq}Let the assumptions (\ref{osp-h0-A}), (\ref{osp-h0}), (%
\ref{osp-h2}), (\ref{osp-h3}) and (\ref{osp-h4}) be satisfied. Assume also that
there exists $\mu \in L_{loc}^{1}\left( \mathbb{R}_{+};\mathbb{R}_{+}\right)$ such, that for all $x,y\in \mathbb{R}^{d},$%
\begin{equation}
\left\vert f\left( t,x\right) -f\left( t,y\right) \right\vert \leq \mu
\left( t\right) \left\vert x-y\right\vert,\quad a.e.\ t\geq 0.
\label{osp-h5}
\end{equation}%
If $m\in BV_{loc}\left( \mathbb{R}_{+};\mathbb{R}^{d}\right) $, then the
generalized convex Skorohod problem with oblique subgradients (\ref{osp-eq1}%
) admits a unique solution $(x,k)$ in the space $C(%
\mathbb{R}_{+};\mathbb{R}^{d})\times \lbrack C(\mathbb{R}_{+};\mathbb{R}^{d})\cap BV_{loc}(\mathbb{R}_{+};\mathbb{R}^{d})]$.
Moreover, if $\left( x,k\right)$ and $(\hat{x},\hat{k})$ are two solutions, corresponding to $m$, respectively $\hat{m},$
then%
\begin{equation}
\left\vert x\left( t\right) -\hat{x}\left( t\right) \right\vert \leq
Ce^{CV\left( t\right) }\left[ \left\vert x_{0}-\hat{x}_{0}\right\vert
+\left\updownarrow m-\hat{m}\right\updownarrow _{t}\right],  \label{ob8}
\end{equation}%
where $V\left( t\right) =\left\updownarrow x\right\updownarrow
_{t}+\updownarrow \!\hat{x}\!\updownarrow _{t}+\left\updownarrow
k\right\updownarrow _{t}+\updownarrow\hat{k}\updownarrow_{t}+\displaystyle%
\int_{0}^{t}\mu \left( r\right) dr$ and $C$ is a constant depending only on $b$ and $c$.
\end{proposition}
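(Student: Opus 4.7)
The central obstruction is that $x\mapsto H(x)\partial\varphi(x)$ is neither monotone nor Lipschitz, so the standard energy argument applied directly to $u := x - \hat x$ fails. My plan is to pre-multiply the difference equation by $[H(x)]^{-1}$ so that the multivalued term $H(x)\,dk$ is converted into the bare subgradient differential $dk$; only then can one exploit the monotonicity inequality $\int_s^t \langle u(r),\,dk(r) - d\hat k(r)\rangle \ge 0$. Since $m,\hat m\in BV_{loc}$, Proposition~\ref{p1-apri-estim} together with the equation itself gives $x,\hat x\in C\cap BV_{loc}$, so the full Riemann--Stieltjes machinery is available.

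I would subtract the two integral equations, pair the Stieltjes differential $du$ with $[H(x(r))]^{-1}u(r)$, and apply the Riemann--Stieltjes Leibniz rule to reconstruct $\tfrac12\langle[H(x(t))]^{-1}u(t),u(t)\rangle$, which is comparable to $|u(t)|^2$ by (\ref{osp-h0-A}). The drift $-\langle[H(x)]^{-1}u,H(x)\,dk\rangle$ collapses to $-\langle u,dk\rangle$; the cross drift $\langle[H(x)]^{-1}u,H(\hat x)\,d\hat k\rangle$ splits as $\langle u,d\hat k\rangle + \langle u,([H(x)]^{-1}H(\hat x)-I)d\hat k\rangle$. The first two pieces combine into $-\int\langle u,dk-d\hat k\rangle\le 0$ by monotonicity and are discarded; the correction is absorbed by $bc\int_s^t|u|^2\,d\!\updownarrow\!\hat k\!\updownarrow$ thanks to the Lipschitz bound $|H(x)-H(\hat x)|\le b|u|$. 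The Leibniz correction $\tfrac12\int_s^t\langle d[H(x)]^{-1}u,u\rangle$ is dominated by $\tfrac{b}{2}\int_s^t|u|^2\,d\!\updownarrow\!x\!\updownarrow$, while (\ref{osp-h5}) contributes $\int_s^t\mu(r)|u|^2\,dr$.

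The most delicate piece is the driving term $\int_s^t\langle[H(x)]^{-1}u,d(m-\hat m)\rangle$; I would handle it by Riemann--Stieltjes integration by parts, turning it into a boundary contribution plus an integral whose integrand $d([H(x)]^{-1}u)$ has total variation controlled by $\updownarrow\!x\updownarrow_t+\updownarrow\!\hat x\updownarrow_t+\updownarrow\!k\updownarrow_t+\updownarrow\!\hat k\updownarrow_t$, yielding a bound of the form $C\updownarrow\!m-\hat m\updownarrow_t\cdot(1+V(t))$. A Young inequality shifts any $|u|^2$ factors into the $dV$-weighted sum. Collecting everything one reaches an inequality
\[
|u(t)|^2 \le C|x_0-\hat x_0|^2 + C\updownarrow\!m-\hat m\updownarrow_t^2 + C\int_0^t |u(r)|^2\,dV(r),
\]
and the Stieltjes Gronwall lemma then produces (\ref{ob8}). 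Uniqueness is the specialization $x_0=\hat x_0$, $m=\hat m$; the invertibility of $H$ combined with $\int_0^\cdot H(x)\,d(k-\hat k)=0$ forces $k=\hat k$ once $x=\hat x$.

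The main obstacle I foresee is the rigorous justification of the Stieltjes Leibniz identity for $\langle[H(x(r))]^{-1}u(r),u(r)\rangle$ when $x$ is only continuous and of bounded variation, together with the companion bound $|d[H(x(r))]^{-1}|\le b\,d\!\updownarrow\!x\!\updownarrow(r)$. This rests on the $C^2$ regularity of $H$ (hence Lipschitz of $[H]^{-1}$) and an approximation by $C^1$ curves converging uniformly and in total variation, and constitutes the technical heart of the argument.
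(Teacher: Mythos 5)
Your proposal is correct and is essentially the paper's own argument: the key move in both is to pre-multiply the difference of the equations by the inverse of $H$ so that the oblique term collapses onto the bare subgradient differential, discard $-\int\langle x-\hat x,dk-d\hat k\rangle\le 0$ by monotonicity, bound the commutator and chain-rule corrections by $C|x-\hat x|^2$ times $d\!\left\updownarrow x\right\updownarrow+d\!\left\updownarrow\hat x\right\updownarrow+d\!\left\updownarrow k\right\updownarrow+d\!\left\updownarrow\hat k\right\updownarrow+\mu\,dr$, and close with a Stieltjes--Gronwall inequality. The paper differs only cosmetically: it symmetrizes via $Q(r)=[H(x(r))]^{-1}+[H(\hat x(r))]^{-1}$ and works with $u=Q^{1/2}(x-\hat x)$ (identity (\ref{eui_1})), and it keeps the driving term linear, $C|u|\,d\!\left\updownarrow m-\hat m\right\updownarrow$, so that (\ref{AnC-dxRNV}) applies directly instead of your integration-by-parts/Young detour, which also works after absorbing the resulting polynomial factors of $V(t)$ into $e^{CV(t)}$.
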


\begin{proof}[{\bf Proof}]
The existence was proved in Theorem \ref{oSP-t1}. Let us prove the
inequality (\ref{ob8}) which clearly yields the uniqueness.

Consider the symmetric and strict positive matrix $Q\left( r\right) =\left[
H\left( x\left( r\right) \right) \right] ^{-1}+\left[ H\left( \hat{x}\left(
r\right) \right) \right] ^{-1}$. Remark that%
\begin{equation}
\begin{array}{l}
Q\left( r\right) \left[ H\left( \hat{x}\left( r\right) \right) d\hat{k}%
\left( r\right) -H\left( x\left( r\right) \right) dk\left( r\right) \right]
\\
=\left( \left[ H\left( x\left( r\right) \right) \right] ^{-1}-\left[ H\left(
\hat{x}\left( r\right) \right) \right] ^{-1}\right) \left[ H\left( \hat{x}%
\left( r\right) \right) d\hat{k}\left( r\right) +H\left( x\left( r\right)
\right) dk\left( r\right) \right] \\
+2\left[ d\hat{k}\left( r\right) -dk\left( r\right) \right].%
\end{array}
\label{eui_1}
\end{equation}%
Let $u\left( r\right) =Q^{1/2}\left( r\right) \left( x\left( r\right) -\hat{x%
}\left( r\right) \right).$ Then%
\begin{align*}
du\left( r\right) & =\left[ dQ^{1/2}\left( r\right) \right] \left( x\left(
r\right) -\hat{x}\left( r\right) \right) +Q^{1/2}\left( r\right) d\left[
x\left( r\right) -\hat{x}\left( r\right) \right] \\
& =\left[ \alpha \left( r\right) dx\left( r\right) +\hat{\alpha}\left(
r\right) d\hat{x}\left( r\right) \right] \left( x\left( r\right) -\hat{x}%
\left( r\right) \right) \\
& +Q^{1/2}\left( r\right) \left[ f\left( r,x\left( r\right) \right) -f\left(
r,\hat{x}\left( r\right) \right) \right] dr \\
& +Q^{1/2}\left( r\right) \left[ dm\left( t\right) -d\hat{m}\left( t\right) %
\right] \\
& +Q^{1/2}\left( r\right) \left[ -H\left( x\left( r\right) \right) dk\left(
r\right) +H\left( \hat{x}\left( r\right) \right) d\hat{k}\left( r\right) %
\right],
\end{align*}%
with $\alpha$, $\hat{\alpha}\in \mathcal{L(%
\mathbb{R}_{+}};\mathbb{R}^{d\times d}\mathcal{)}$, where $\mathcal{L(%
\mathbb{R}_{+}};\mathbb{R}^{d\times d}\mathcal{)}$ is the space of
continuous linear operators from $\mathbb{R}_{+}$ into $\mathbb{R}%
^{d\times d}$.

Using (\ref{eui_1}) and the assumptions on the matrix-valued functions $%
x\longmapsto H\left( x\right) $ and $x\longmapsto \left[ H\left( x\right) %
\right]^{-1},$ we have (as signed measures on $\mathbb{R}_{+}$), for some
positive constants $C_{1},C_{2},C_{3},C$ depending only on the constants $c$
and $b,$%
\begin{align*}
\left\langle u\left( r\right) ,du\left( r\right) \right\rangle & \leq
C_{1}\left\vert u\left( r\right) \right\vert ^{2}\left( d\left\updownarrow
x\right\updownarrow _{r}+d\updownarrow \!\hat{x}\!\updownarrow _{r}\right)
+C_{2}\mu \left( r\right) \left\vert u\left( r\right) \right\vert ^{2}dr \\
& +C_{3}\left\vert u\left( r\right) \right\vert d\left\updownarrow m-\hat{m}%
\right\updownarrow _{r} \\
& +\left\langle x\left( r\right) -\hat{x}\left( r\right) ,Q\left( r\right) %
\left[ H\left( \hat{x}\left( r\right) \right) d\hat{k}\left( r\right)
-H\left( x\left( r\right) \right) dk\left( r\right) \right] \right\rangle  \\
& \leq C\left\vert u\left( r\right) \right\vert d\left\updownarrow m-\hat{m}%
\right\updownarrow _{r}+C\left\vert u\left( r\right) \right\vert
^{2}dV\left( r\right),
\end{align*}%
with $V\left( t\right) =\left\updownarrow x\right\updownarrow
_{t}+\updownarrow \!\hat{x}\!\updownarrow _{t}+\left\updownarrow
k\right\updownarrow _{t}+\updownarrow \hat{k}\updownarrow _{t}+\displaystyle%
\int_{0}^{t}\mu \left( r\right) dr.$ Now, by (\ref{AnC-dxRNV}), we infer, for
all $t\geq 0$,%
\begin{equation}
\left\vert u\left( t\right) \right\vert \leq e^{CV\left( t\right)
}\left\vert x_{0}-\hat{x}_{0}\right\vert +\displaystyle\int_{0}^{t}Ce^{C%
\left[ V\left( t\right) -V\left( r\right) \right] }d\left\updownarrow m-\hat{%
m}\right\updownarrow _{r}~.  \label{o2unq}
\end{equation}%
and the inequality (\ref{ob8}) follows.\hfill
\end{proof}

\begin{proposition}
\label{p-aproxm}Under the assumptions of Proposition \ref{oSP-p1-uniq} and,
for $m\in C^{1}\left( \mathbb{R}_{+};\mathbb{R}^{d}\right)$, the solution $%
\left( x_{\varepsilon }\right) _{0<\varepsilon \leq 1}$ of the approximating
equation%
\begin{equation}
\begin{array}{l}
x_{\varepsilon }\left( t\right) +\displaystyle\int_{0}^{t}H\left(
x_{\varepsilon }\left( s\right) \right) dk_{\varepsilon }\left( s\right)
=x_{0}+\displaystyle\int_{0}^{t}f\left( s,\mathbb{\pi }_{D}\left(
x_{\varepsilon }\left( s\right) \right) \right) ds+m\left( t\right),\quad
t\geq 0,\smallskip \\
dk_{\varepsilon }\left( s\right) =\nabla \varphi _{\varepsilon }\left(
x_{\varepsilon }\left( s\right) \right) ds,%
\end{array}
\label{aproxeq}
\end{equation}%
has the following properties:\newline
$\bullet $ for all $T>0$ there exists a constant $C_{T}$, independent of $%
\varepsilon,\delta \in ]0,1],$ such that%
$$
\begin{array}{rl}
\left( j\right) \quad & \sup\limits_{t\in \left[ 0,T\right] }\left\vert
x_{\varepsilon }\left( t\right) \right\vert ^{2}+\sup_{t\in \left[ 0,T\right]
}\left\vert \varphi _{\varepsilon }\left( x_{\varepsilon }\left( t\right)
\right) \right\vert +\displaystyle\int_{0}^{T}\left\vert \nabla \varphi
_{\varepsilon }\left( x_{\varepsilon }\left( s\right) \right) \right\vert
^{2}ds\leq C_{T}~,\medskip \\
\left( jj\right) \quad & \left\updownarrow x_{\varepsilon
}\right\updownarrow _{\left[ s,t\right] }\leq C_{T}\sqrt{t-s},\quad \textrm{%
for all }0\leq s\leq t\leq T~,\medskip \\
\left( jjj\right) \quad & \left\Vert x_{\varepsilon }-x_{\delta }\right\Vert
_{T}\leq C_{T}\sqrt{\varepsilon +\delta }~.%
\end{array}%
$$
$\bullet$ Moreover, there exist $x,k\in C\left( \left[ 0,T\right];\mathbb{R%
}^{d}\right)$ and $h\in L^{2}\left( 0,T;\mathbb{R}^{d}\right) ,$ such that%
\newline
$$
\lim_{\varepsilon \rightarrow 0}k_{\varepsilon }\left( t\right) =k\left(
t\right) =\int_{0}^{t}h\left( s\right) ds\textrm{, for all }t\in \left[ 0,T%
\right],
$$
$$\lim\limits_{\varepsilon \rightarrow 0}\left\Vert x_{\varepsilon}-x\right\Vert_{T}=0$$
and $\left(x,k\right)$ is the unique solution of the variational
inequality with oblique subgradients (\ref{osp-eq}).
\end{proposition}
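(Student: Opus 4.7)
The plan specialises the strategies already deployed for Theorem \ref{oSP-t1} and Proposition \ref{oSP-p1-uniq} to the regular-input setting $m\in C^{1}(\mathbb{R}_{+};\mathbb{R}^{d})$. For the a~priori estimate $(j)$, I would imitate Step~1 of Theorem \ref{oSP-t1}: fix $(u_{0},\hat u_{0})\in\partial\varphi$, apply the chain rule to $|x_{\varepsilon}(t)-u_{0}|^{2}+\varphi_{\varepsilon}(x_{\varepsilon}(t))$, and exploit the coercivity $\langle H(x)v,v\rangle\geq c^{-1}|v|^{2}$ from (\ref{osp-h0-A}) to absorb a term $\tfrac{1}{c}\int_{0}^{t}|\nabla\varphi_{\varepsilon}(x_{\varepsilon})|^{2}\,dr$ on the left. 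Cross terms involving $2(x_{\varepsilon}-u_{0})$ and $f(\cdot,\pi_{D}(x_{\varepsilon}))+m'$ are handled by Young's inequality, using $|f(s,\pi_{D}(x_{\varepsilon}))|\leq f^{\#}(s)$, and Gronwall closes $(j)$. Estimate $(jj)$ is then immediate: take the total variation of (\ref{aproxeq}) and apply Cauchy--Schwarz to $\int_{s}^{t}|H(x_{\varepsilon})\nabla\varphi_{\varepsilon}(x_{\varepsilon})|\,dr\leq c\sqrt{t-s}\,\|\nabla\varphi_{\varepsilon}(x_{\varepsilon})\|_{L^{2}(0,T)}$, the remaining contributions being $O(t-s)$.

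The Cauchy property $(jjj)$ is the heart of the matter and will rely on the $Q$-rotation trick from Proposition \ref{oSP-p1-uniq}. Setting $Q(r)=[H(x_{\varepsilon}(r))]^{-1}+[H(x_{\delta}(r))]^{-1}$ and $u(r)=Q^{1/2}(r)(x_{\varepsilon}(r)-x_{\delta}(r))$, the symmetry of $H$ yields the algebraic identity analogous to (\ref{eui_1}), with $d\hat k,dk$ replaced by $\nabla\varphi_{\delta}(x_{\delta})\,dr,\nabla\varphi_{\varepsilon}(x_{\varepsilon})\,dr$. Pairing with $x_{\varepsilon}-x_{\delta}$, the Lipschitz-controlled piece (coming from $[H(x_{\varepsilon})]^{-1}-[H(x_{\delta})]^{-1}$) is absorbed into a $|u|^{2}$-term via $(j)$, while the genuinely monotone piece $2\langle\nabla\varphi_{\delta}(x_{\delta})-\nabla\varphi_{\varepsilon}(x_{\varepsilon}),x_{\varepsilon}-x_{\delta}\rangle$ is treated through monotonicity of $\partial\varphi$ at the resolvents $J_{\varepsilon}(x_{\varepsilon}),J_{\delta}(x_{\delta})$, using $x_{\varepsilon}-x_{\delta}=J_{\varepsilon}(x_{\varepsilon})-J_{\delta}(x_{\delta})+\varepsilon\nabla\varphi_{\varepsilon}(x_{\varepsilon})-\delta\nabla\varphi_{\delta}(x_{\delta})$. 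This yields the penalty-type lower bound
\[
\langle\nabla\varphi_{\varepsilon}(x_{\varepsilon})-\nabla\varphi_{\delta}(x_{\delta}),x_{\varepsilon}-x_{\delta}\rangle\geq\varepsilon|\nabla\varphi_{\varepsilon}(x_{\varepsilon})|^{2}+\delta|\nabla\varphi_{\delta}(x_{\delta})|^{2}-(\varepsilon+\delta)|\nabla\varphi_{\varepsilon}(x_{\varepsilon})||\nabla\varphi_{\delta}(x_{\delta})|,
\]
whose integrated residue is $O(\varepsilon+\delta)$ by $(j)$ and AM--GM. The Lipschitz bound (\ref{osp-h5}) on $f\circ\pi_{D}$ and Gronwall then deliver $\|x_{\varepsilon}-x_{\delta}\|_{T}\leq C_{T}\sqrt{\varepsilon+\delta}$.

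For the convergence statement, $(jjj)$ shows that $\{x_{\varepsilon}\}$ is Cauchy in $C([0,T];\mathbb{R}^{d})$ with limit $x$; the $L^{2}$ bound in $(j)$ extracts a weakly convergent subsequence $\nabla\varphi_{\varepsilon_{n}}(x_{\varepsilon_{n}})\rightharpoonup h$ in $L^{2}(0,T;\mathbb{R}^{d})$, and since $|x_{\varepsilon}-J_{\varepsilon}(x_{\varepsilon})|^{2}\leq\varepsilon C_{T}$ one also gets $J_{\varepsilon_{n}}(x_{\varepsilon_{n}})\to x$. Passing to the limit in (\ref{aproxeq}) exactly as in the derivation of (\ref{limhh}) produces the target equation with $k(t)=\int_{0}^{t}h(s)\,ds$, and the inclusion $h(r)\in\partial\varphi(x(r))$ is obtained by taking $\liminf$ in the subdifferential inequality for $\nabla\varphi_{\varepsilon}(x_{\varepsilon})\in\partial\varphi(J_{\varepsilon}(x_{\varepsilon}))$, reproducing the end of Step~1 of Theorem \ref{oSP-t1}. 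Uniqueness from Proposition \ref{oSP-p1-uniq} upgrades subsequential to full convergence. The main obstacle is $(jjj)$: because $x\mapsto H(x)\nabla\varphi_{\varepsilon}(x)$ is not monotone in $x$, a direct comparison of $x_{\varepsilon}$ and $x_{\delta}$ fails; the $Q$-rotation restores a genuine dissipation (modulo a Lipschitz error absorbable via $(j)$), while the resolvent identity confines the Yosida mismatch to an $O(\varepsilon+\delta)$ remainder.
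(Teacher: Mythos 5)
Your proposal is correct and follows essentially the same route as the paper: $(j)$ and $(jj)$ by repeating the a priori estimates of Step~1 of Theorem \ref{oSP-t1}, $(jjj)$ by the $Q_{\varepsilon,\delta}$-rotation of Proposition \ref{oSP-p1-uniq} combined with the near-monotonicity inequality (\ref{sub6a}-d) for the Yosida approximations (which you rederive from the resolvent identity rather than cite) and the Gronwall-type estimate (\ref{ineq1-Anex}), and the final convergence by the Cauchy property plus weak $L^{2}$ compactness of $\nabla\varphi_{\varepsilon}(x_{\varepsilon})$ and identification of the limit as in Theorem \ref{oSP-t1}. No gaps.
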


\begin{proof}[{\bf Proof}]
The proof for the estimates $\left(j\right)$ and $\left(jj\right)$ are
exactly as in the proof of Theorem \ref{oSP-t1}.

Let us prove $\left(jjj\right)$. Similarly to the proof of the uniqueness
result (Proposition \ref{oSP-p1-uniq}), we introduce
$Q_{\varepsilon,\delta}\left(s\right)=\left[H\left(x_{\varepsilon}\left(s\right)\right)\right]^{-1}+\left[H\left(x_{\delta}\left(s\right)\right)\right]^{-1}$. Once again, to simplify the reading, we
omit $s$ in the argument of $x_{\varepsilon}\left(s\right)$ and $x_{\delta}\left(s\right)$. Remark that
\begin{align*}
Q_{\varepsilon,\delta}\left(s\right) & \left[H\left(x_{\delta}\right)
\nabla\varphi_{\delta}\left(x_{\delta}\right)-H\left(x_{\varepsilon}\right)\nabla\varphi_{\varepsilon}\left(x_{\varepsilon}\right)\right]
\\
&=\left(\left[H\left(x_{\varepsilon}\right)\right]^{-1}-\left[H\left(x_{\delta}\right)\right]^{-1}\right)\left[H\left(x_{\delta}\right)\nabla\varphi _{\delta}\left(x_{\delta}\right)
+H\left(x_{\varepsilon}\right)\nabla\varphi_{\varepsilon}\left(x_{\varepsilon}\right)\right]\\
& +2\left[dk_{\delta}\left(s\right)-dk_{\varepsilon}\left(s\right)\right].
\end{align*}

Let $u_{\varepsilon,\delta}\left(s\right)=Q_{\varepsilon,\delta}^{1/2}\left(s\right)\left(x_{\varepsilon}\left(s\right)-x_{\delta}\left(s\right)\right)$. Then
\begin{align*}
du_{\varepsilon,\delta}\left(s\right)&=\left[ dQ_{\varepsilon,\delta
}^{1/2}\left( s\right) \right] \left(x_{\varepsilon}-x_{\delta}\right)
+Q_{\varepsilon,\delta}^{1/2}\left(s\right) d\left[x_{\varepsilon
}-x_{\delta}\right] \\
& =\left[\alpha_{\varepsilon,\delta}\left( s\right) dx_{\varepsilon
}+\beta_{\varepsilon,\delta}\left( s\right) dx_{\delta}\right] \left(
x_{\varepsilon}-x_{\delta}\right) \\
& +Q_{\varepsilon,\delta}^{1/2}\left( s\right) \left[ f\left( s,\mathbb{\pi}%
_{D}\left( x_{\varepsilon}\right) \right) -f\left( s,\mathbb{\pi}_{D}\left(
x_{\delta}\right) \right) \right] ds \\
& +Q_{\varepsilon,\delta}^{1/2}\left( s\right) \left( s\right) \left[
-H\left( x_{\varepsilon}\right) \nabla\varphi_{\varepsilon}\left(
x_{\varepsilon}\right) +H\left( x_{\delta}\right) \nabla\varphi_{\delta
}\left( x_{\delta}\right) \right] ds,
\end{align*}
where $\alpha_{\varepsilon,\delta}$, $\beta_{\varepsilon,\delta}:\mathbb{R}%
_{+}\rightarrow\mathbb{R}^{d\times d}$ are some continuous functions which
are bounded uniformly in $\varepsilon,\delta.$

Therefore, for $s\in \left[0,T\right],$%
\begin{align*}
\left\langle u_{\varepsilon,\delta }\left( s\right) ,du_{\varepsilon
,\delta }\left(s\right)\right\rangle & \leq C\left\vert u_{\varepsilon
,\delta }\left(s\right)\right\vert ^{2}\left( d\left\updownarrow
x_{\varepsilon }\right\updownarrow _{s}+d\left\updownarrow x_{\delta
}\right\updownarrow _{s}\right) +C\mu \left( s\right) \left\vert
u_{\varepsilon,\delta }\left( s\right) \right\vert ^{2}ds \\
& +2\left\langle x_{\varepsilon }-x_{\delta },Q_{\varepsilon ,\delta }\left(
s\right) \left[ H\left( x_{\delta }\right) \nabla \varphi _{\delta }\left(
x_{\delta }\right) -H\left( x_{\varepsilon }\right) \nabla \varphi
_{\varepsilon }\left( x_{\varepsilon }\right) \right] \right\rangle ds \\
& \leq C\left\vert u_{\varepsilon ,\delta }\left( s\right) \right\vert
^{2}dV\left( s\right) +4\left\langle x_{\varepsilon }-x_{\delta },\nabla
\varphi _{\delta }\left( x_{\delta }\right) -\nabla \varphi _{\varepsilon
}\left( x_{\varepsilon }\right) \right\rangle ds,
\end{align*}%
with $V\left( s\right) =\left\updownarrow x_{\varepsilon }\right\updownarrow
_{s}+\left\updownarrow x_{\delta }\right\updownarrow _{s}+\left\updownarrow
k_{\varepsilon }\right\updownarrow _{s}+\left\updownarrow k_{\delta
}\right\updownarrow _{s}+\displaystyle\int_{0}^{s}\mu \left( r\right) dr\leq
C_{T}.$

\noindent Since, according to Asiminoaei \& R\u{a}\c{s}canu \cite{Asiminoaei/Rascanu:97},
$$
\left\langle \nabla \varphi _{\varepsilon }(x)-\nabla \varphi _{\delta
}(y),x-y\right\rangle \geq -\left( \varepsilon +\delta \right) |\nabla
\varphi _{\varepsilon }(x)||\nabla \varphi _{\delta }(y)|,
$$
we have
\begin{align*}
\left\langle x_{\varepsilon }\left( r\right) -x_{\delta }\left( r\right)
,dk_{\delta }\left( r\right) -dk_{\varepsilon }\left( r\right) \right\rangle
& =\left\langle x_{\varepsilon }\left( r\right) -x_{\delta }\left( r\right)
,\nabla \varphi \left( x_{\delta }\left( r\right) \right) -\nabla \varphi
\left( x_{\varepsilon }\left( r\right) \right) \right\rangle dr \\
& \leq \left( \varepsilon +\delta \right) \left\vert \nabla \varphi \left(
x_{\delta }\left( r\right) \right) \right\vert \left\vert \nabla \varphi
\left( x_{\varepsilon }\left( r\right) \right) \right\vert dr.
\end{align*}

\noindent Consequently,
$$
\left\langle u_{\varepsilon,\delta }\left( r\right),du_{\varepsilon,\delta }\left( r\right) \right\rangle \leq 4\left( \varepsilon +\delta
\right) \left\vert \nabla \varphi \left( x_{\delta }\left( r\right) \right)
\right\vert \left\vert \nabla \varphi \left(x_{\varepsilon}\left( r\right)
\right) \right\vert dr+C\left\vert u_{\varepsilon,\delta }\left( r\right)
\right\vert ^{2}dV\left(r\right),
$$

\noindent Using inequality (\ref{ineq1-Anex}) from Annex 4.3. we deduce that there exists some positive constants, that will be denoted by a generic one $C$, such that
\begin{align*}
\left\Vert x_{\varepsilon}-x_{\delta}\right\Vert _{T} & \leq C\left\Vert
u_{\varepsilon,\delta}\right\Vert _{T} \\
& \leq C\sqrt{\varepsilon+\delta}\left( \displaystyle\int
_{0}^{T}\left\vert \nabla\varphi\left( x_{\delta}\left( r\right) \right)
\right\vert \left\vert \nabla\varphi\left( x_{\varepsilon}\left( r\right)
\right) \right\vert dr\right) ^{1/2} \\
& \leq C\sqrt{\varepsilon+\delta}\left[ \left( \displaystyle\int
_{0}^{T}\left\vert \nabla\varphi\left( x_{\delta}\left( r\right) \right)
\right\vert ^{2}dr\right) ^{1/2}+\left( \displaystyle\int _{0}^{T}\left\vert
\nabla\varphi\left( x_{\varepsilon}\left( r\right) \right) \right\vert
^{2}dr\right) ^{1/2}\right] \\
& \leq C\sqrt{\varepsilon+\delta}.
\end{align*}

\noindent Now, the other assertions clearly follows and the proof is complete.\hfill
\end{proof}

\begin{corollary}
If $\left(\Omega,\mathcal{F},\mathbb{P},\{\mathcal{F}_{t}\}_{t\geq
0}\right) $ is a stochastic basis and $M$ a $\mathcal{F}_{t}-$progressively
measurable stochastic process such that $M_{\cdot }\left( \omega \right) \in
C^{1}\left( \mathbb{R}_{+};\mathbb{R}^{d}\right),\;\mathbb{P}-a.s.\;\omega
\in \Omega$, then, under the assumptions of Proposition \ref{oSP-p1-uniq}, $%
\mathbb{P}-a.s.\;\omega \in \Omega $, the random generalized Skorohod
problem with oblique subgradients:\newline
$$
\left\{
\begin{array}{r}
X_{t}\left( \omega \right) +\displaystyle\int_{0}^{t}H\left( X_{t}\left(
\omega \right) \right) dK_{t}\left( \omega \right) =x_{0}+\displaystyle%
\int_{0}^{t}f\left( s,X_{s}\left( \omega \right) \right) ds+M_{t}\left(
\omega \right),\quad t\geq 0,\medskip \\
\multicolumn{1}{l}{dK_{t}\left( \omega \right) \in \partial \varphi \left(
X_{t}\left( \omega \right) \right) \left( dt\right)}%
\end{array}%
\right.
$$
admits a unique solution $\left( X_{\cdot }\left( \omega \right) ,K_{\cdot
}\left(\omega\right)\right).$ Moreover $X$ and $K$ are $\mathcal{F}_{t}-$%
progressively measurable stochastic processes.
\end{corollary}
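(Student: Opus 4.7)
The plan is to combine the pathwise existence/uniqueness already established with a measurability argument based on the approximation scheme from Proposition \ref{p-aproxm}. Since, for $\mathbb{P}$-almost every $\omega$, the trajectory $M_{\cdot}(\omega)$ lies in $C^{1}(\mathbb{R}_{+};\mathbb{R}^{d})\subset C(\mathbb{R}_{+};\mathbb{R}^{d})\cap BV_{loc}(\mathbb{R}_{+};\mathbb{R}^{d})$, Theorem \ref{oSP-t1} applied pathwise to the input $M_{\cdot}(\omega)$ produces a solution $(X_{\cdot}(\omega),K_{\cdot}(\omega))$ of the random Skorohod problem, and Proposition \ref{oSP-p1-uniq} guarantees its uniqueness together with the stability estimate (\ref{ob8}). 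Hence the only task left is to produce versions of $X$ and $K$ that are $\mathcal{F}_{t}$-progressively measurable.

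To this end I would work with the penalized equation (\ref{aproxeq}), substituting its driving input $m$ by $M_{\cdot}(\omega)$. The vector field $x\longmapsto -H(x)\nabla\varphi_{\varepsilon}(x)+f(s,\pi_{D}(x))$ is Carath\'eodory in $(s,x)$ and globally Lipschitz in $x$, uniformly on bounded time intervals, thanks to $H\in C_{b}^{2}$, the $1/\varepsilon$-Lipschitz character of $\nabla\varphi_{\varepsilon}$, the continuity of $\pi_{D}$ and assumption (\ref{osp-h5}). Picard iteration, initialized at $X^{\varepsilon,0}\equiv x_{0}$, therefore produces a unique global solution $X^{\varepsilon}_{\cdot}(\omega)$ for every $\omega$; a straightforward induction shows each iterate is $\mathcal{F}_{t}$-progressively measurable (the crucial point being the progressive measurability of $M$), and this property survives the uniform limit in $n$, so $X^{\varepsilon}$ itself is progressively measurable. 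The process $K^{\varepsilon}_{t}(\omega)=\int_{0}^{t}\nabla\varphi_{\varepsilon}(X^{\varepsilon}_{s}(\omega))\,ds$ inherits the property immediately.

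To conclude I would invoke Proposition \ref{p-aproxm}: for every $\omega$, $X^{\varepsilon}(\omega)\to X(\omega)$ and $K^{\varepsilon}(\omega)\to K(\omega)$ uniformly on each compact subinterval as $\varepsilon\to 0$, so that $X$ and $K$ are pointwise limits of $\mathcal{F}_{t}$-progressively measurable processes, hence progressively measurable. I expect the main, and essentially only, obstacle to be the bookkeeping of measurability along the Picard iteration, in particular the joint measurability in $(\omega,t)$ of each iterate; this is handled by the standard argument for ODEs with random parameters, using the uniform estimate ($j$) from Proposition \ref{p-aproxm} to ensure that on each interval $[0,T]$ the relevant Lipschitz constants and a priori bounds are deterministic and finite. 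Everything else is a direct application of results already proved in the deterministic sections.
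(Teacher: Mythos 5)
Your proposal is correct and follows essentially the same route as the paper: pathwise existence and uniqueness from Theorem \ref{oSP-t1} and Proposition \ref{oSP-p1-uniq}, and progressive measurability obtained by passing to the limit in the penalized equation (\ref{aproxeq}) via Proposition \ref{p-aproxm} (the paper simply asserts the measurability of $(X^{\varepsilon},K^{\varepsilon})$, whereas you spell out the Picard-iteration argument behind it). One small correction: $x\mapsto H(x)\nabla\varphi_{\varepsilon}(x)$ is only \emph{locally} Lipschitz with sublinear growth (since $\nabla\varphi_{\varepsilon}$ need not be bounded), not globally Lipschitz, but this does not affect your argument since the a priori bound $(j)$ you invoke localizes the iteration on each $[0,T]$.
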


\begin{proof}[{\bf Proof}]
In this moment we have to prove that $X$ and $K$ are $\mathcal{F}_{t}-$%
progressively measurable stochastic processes. But this follows from
Proposition \ref{p-aproxm}, since the approximating equation (\ref{aproxeq})
admits a unique solution $\left( X^{\varepsilon},K^{\varepsilon }\right)$,
which is a progressively measurable continuous stochastic process.\hfill
\end{proof}

\section{SVI with oblique subgradients}

\subsection{Notations. Hypotheses}

In this section we will present the Stochastic Variational Inequalities (for
short, SVI) with oblique subgradient and the definition of theirs strong and
weak solutions. The proof of the existence and uniqueness results are given
in the next subsection.

Let $\left( \Omega ,\mathcal{F},\mathbb{P},\{\mathcal{F}_{t}\}_{t\geq
0}\right) $ be a stochastic basis and $\left\{ B_{t}:t\geq 0\right\} $ a $%
\mathbb{R}^{k}-$valued Brownian motion. Our objective is to solve the SVI
with oblique reflection%
\begin{equation}
\left\{
\begin{array}{l}
X_{t}+\displaystyle\int_{0}^{t}H\left( X_{t}\right) dK_{t}=x_{0}+%
\displaystyle\int_{0}^{t}f\left( s,X_{s}\right) ds+\displaystyle%
\int_{0}^{t}g\left( s,X_{s}\right) dB_{s},\quad t\geq 0,\smallskip \\
dK_{t}\in \partial \varphi \left( X_{t}\right) \left( dt\right),%
\end{array}%
\right.  \label{oSP-eq2}
\end{equation}%
where $x_{0}\in \mathbb{R}^{d}$ and%
\begin{equation}
\begin{array}{rl}
\left( i\right) \quad & \left( t,x\right) \longmapsto f\left( t,x\right) :%
\mathbb{R}_{+}\times \mathbb{R}^{d}\rightarrow \mathbb{R}^{d}\textrm{\ and}%
\;\left( t,x\right) \longmapsto g\left( t,x\right) :\mathbb{R}_{+}\times
\mathbb{R}^{d}\rightarrow \mathbb{R}^{d\times k}\;\textrm{are}\smallskip \\
& \quad \textrm{Carath\'{e}odory functions (i.e. measurable w.r. to }t\textrm{
and continuous w.r. to }x\textrm{),}\medskip \\
\left( ii\right) \quad & \displaystyle\int_{0}^{T}(f^{\#}\left(
t\right)) ^{2}dt+\displaystyle\int_{0}^{T}(g^{\#}\left(t\right)
)^{4}dt<\infty,%
\end{array}
\label{ob-h6}
\end{equation}%
with%
$$
f^{\#}\left( t\right) \overset{def}{=}\sup_{x\in Dom\left( \varphi \right)
}\left\vert f\left( t,x\right) \right\vert \quad \textrm{and}\quad
g^{\#}\left( t\right) \overset{def}{=}\sup_{x\in Dom\left( \varphi \right)
}\left\vert g\left( t,x\right) \right\vert.
$$
We also add Lipschitz continuity conditions:%
\begin{equation}
\begin{array}{rl}
& \exists ~\mu \in L_{loc}^{1}\left( \mathbb{R}_{+}\right) ,\;\;\exists
~\ell \in L_{loc}^{2}\left( \mathbb{R}_{+}\right) \textrm{ s.t. }\forall
~x,y\in \mathbb{R}^{d},\quad a.e.\ t\geq 0,\medskip \\
\left( i\right) \quad & \quad \quad \left\vert f\left( t,x\right) -f\left(
t,y\right) \right\vert \leq \mu \left( t\right) \left\vert x-y\right\vert
,\medskip \\
\left( ii\right) \quad & \quad \quad \left\vert g\left( t,x\right) -g\left(
t,y\right) \right\vert \leq \ell \left( t\right) \left\vert x-y\right\vert .%
\end{array}
\label{ob-h7}
\end{equation}

\begin{definition}
\label{def-weak-strong-sol} $\left( I\right) $ Given a stochastic basis $%
(\Omega ,\mathcal{F},\mathbb{P},\left\{ \mathcal{F}_{t}\right\} _{t\geq 0})$
and a $\mathbb{R}^{k}-$valued $\mathcal{F}_{t}-$Brownian motion $\left\{
B_{t}:t\geq 0\right\} ,$ a pair $\left( X,K\right) :\Omega \times \left[
0,\infty \right[ \rightarrow \mathbb{R}^{d}\times \mathbb{R}^{d}$ of
continuous $\mathcal{F}_{t}-$progressively measurable stochastic processes
is a strong solution of the SDE (\ref{oSP-eq2}) if, $\mathbb{P}-a.s.\;\omega
\in \Omega :$%
\begin{equation}
\left\{
\begin{array}{rl}
i)\; & X_{t}\in \overline{Dom\left( \varphi \right) },\textrm{ \thinspace }%
\forall \,t\geq 0,\;\varphi \left( X_{\cdot }\right) \in L_{loc}^{1}\left(
\mathbb{R}_{+}\right) ,\smallskip \\
ii)\; & K_{\cdot }\in BV_{loc}\left( \left[ 0,\infty \right[ ;\mathbb{R}%
^{d}\right) ,\textrm{\quad\ }K_{0}=0\textrm{,}\smallskip \\
iii)\; & X_{t}+\displaystyle\int_{0}^{t}H\left( X_{s}\right) dK_{s}=x_{0}+%
\displaystyle\int_{0}^{t}f\left( s,X_{s}\right) ds+\displaystyle%
\int_{0}^{t}g\left( s,X_{s}\right) dB_{s},\ \forall ~t\geq 0,\smallskip \\
iv)\; & \forall \,0\leq s\leq t,\;\forall y:\mathbb{R}_{+}\rightarrow
\mathbb{R}^{d}\textrm{ continuous}:\smallskip \\
& \quad \quad \displaystyle\int_{s}^{t}\left\langle y\left( r\right)
-X_{r},dK_{r}\right\rangle +\displaystyle\int_{s}^{t}\varphi \left(
X_{r}\right) dr\leq \displaystyle\int_{s}^{t}\varphi \left( y\left( r\right)
\right) dr.%
\end{array}%
\right.  \label{sp-20a}
\end{equation}%
That is%
$$
\left( X_{\cdot }\left( \omega \right) ,K_{\cdot }\left( \omega \right)
\right) \in \mathcal{SP}\left( H\partial \varphi ;x_{0},M_{\cdot }\left( \omega
\right)\right),\quad \mathbb{P}-a.s.\;\omega \in \Omega,
$$
with%
$$
M_{t}=\displaystyle\int_{0}^{t}f\left( s,X_{s}\right) ds+\displaystyle%
\int_{0}^{t}g\left( s,X_{s}\right) dB_{s}~.
$$
$\left( II\right) \quad $If there exists a stochastic basis $\left( \Omega ,%
\mathcal{F},\mathbb{P},\mathcal{F}_{t}\right)_{t\geq 0}$, a $\mathbb{R}%
^{k}-$valued $\mathcal{F}_{t}-$Brownian motion $\left\{ B_{t}:t\geq
0\right\} $ and a pair $\left( X_{\cdot },K_{\cdot }\right) :\Omega \times
\mathbb{R}_{+}\rightarrow \mathbb{R}^{d}\times \mathbb{R}^{d}$ of $\mathcal{F%
}_{t}-$progressively measurable continuous stochastic processes such that
$$
\left( X_{\cdot}\left(\omega\right),K_{\cdot}\left(\omega\right)
\right) \in \mathcal{SP}\left(H\partial \varphi;x_{0},M_{\cdot}\left(\omega
\right)\right),\quad \mathbb{P}-a.s.\;\omega \in \Omega,
$$
then the collection $\left(\Omega,\mathcal{F},\mathbb{P},\mathcal{F}%
_{t},B_{t},X_{t},K_{t}\right)_{t\geq 0}$ is called a weak solution of the
SVI (\ref{oSP-eq2}).

\noindent (In both cases $\left(I\right)$ and $\left(II\right)$ we will
say that $\left( X_{t},K_{t}\right) $ is a solution of the oblique reflected
SVI (\ref{oSP-eq2}).)
\end{definition}

\subsection{Existence and uniqueness}

In this section we will give the result of existence and uniqueness of the
solution for the stochastic variational inequality with oblique subgradients
introduced before. Theorem \ref{weak-ex-OSVI} deals with the existence of
a weak solution in the sense of Definition \ref{def-weak-strong-sol},
while Theorem \ref{path-w-uni} proves the uniqueness of a strong solution.

\begin{theorem}
\label{weak-ex-OSVI} Let the assumptions (\ref{osp-h0-A}), (\ref{osp-h2}), (%
\ref{osp-h3}) and (\ref{ob-h6}) be satisfied. Then the SVI (\ref{oSP-eq2})
has at least one weak solution $\left( \Omega,\mathcal{F},\mathbb{P},%
\mathcal{F}_{t},B_{t},X_{t},K_{t}\right) _{t\geq0}.$
\end{theorem}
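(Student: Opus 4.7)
The plan is to combine Yosida penalization of the subdifferential $\partial\varphi$ with Skorokhod's compactness method, so that the oblique-subgradient term is handled pathwise through the deterministic theory of Section~2. Since only Carath\'eodory (not Lipschitz) continuity of $f$ and $g$ is assumed, we cannot hope for pathwise existence and must construct the solution on a possibly enlarged stochastic basis.

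\textbf{Step 1: penalized SDE and a priori estimates.} For each $\varepsilon\in(0,1]$, the penalized equation
\[
X^{\varepsilon}_t+\int_0^t H(X^{\varepsilon}_s)\nabla\varphi_{\varepsilon}(X^{\varepsilon}_s)\,ds=x_0+\int_0^t f(s,\pi_D X^{\varepsilon}_s)\,ds+\int_0^t g(s,\pi_D X^{\varepsilon}_s)\,dB_s
\]
has locally Lipschitz drift $-H(\cdot)\nabla\varphi_{\varepsilon}(\cdot)$ perturbed by Carath\'eodory coefficients that are bounded by $f^{\#}(s)$ and $g^{\#}(s)$ on $D$; weak existence is supplied by Skorokhod's classical theorem for SDEs with continuous coefficients. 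Set $K^{\varepsilon}_t:=\int_0^t\nabla\varphi_{\varepsilon}(X^{\varepsilon}_s)\,ds$ and apply It\^o's formula to $|X^{\varepsilon}_t-u_0|^2+\varphi_{\varepsilon}(X^{\varepsilon}_t)$ for a fixed $(u_0,\hat u_0)\in\partial\varphi$, exactly as in the chain (\ref{ea-ini})--(\ref{est1}) of the deterministic proof, absorbing the new martingale $\int_0^{\cdot}2\langle X^{\varepsilon}_s-u_0,g(s,\pi_D X^{\varepsilon}_s)\,dB_s\rangle$ by Burkholder--Davis--Gundy together with the hypothesis $\int_0^T(g^{\#})^4\,dt<\infty$. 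This yields uniform moment bounds for $\|X^{\varepsilon}\|_T$, $\sup_{t\leq T}\varphi_{\varepsilon}(X^{\varepsilon}_t)$ and $\int_0^T|\nabla\varphi_{\varepsilon}(X^{\varepsilon}_s)|^2\,ds$. Viewing $(X^{\varepsilon},K^{\varepsilon})$ pathwise as an element of $\mathcal{SP}(H\partial\varphi;x_0,M^{\varepsilon})$ with input $M^{\varepsilon}_t:=\int_0^t f(s,\pi_D X^{\varepsilon}_s)\,ds+\int_0^t g(s,\pi_D X^{\varepsilon}_s)\,dB_s$, Proposition~\ref{p1-apri-estim} provides the modulus estimate
\[
|X^{\varepsilon}_t-X^{\varepsilon}_s|+\left\updownarrow K^{\varepsilon}\right\updownarrow_t-\left\updownarrow K^{\varepsilon}\right\updownarrow_s\leq C_T(\|M^{\varepsilon}\|_T)\sqrt{t-s+\mathbf{m}_{M^{\varepsilon}}(t-s)}.
\]
Combined with a Kolmogorov--Chentsov estimate on $M^{\varepsilon}$ (BDG plus the fourth-power bound on $g^{\#}$) this yields tightness of the joint laws of $(X^{\varepsilon},K^{\varepsilon},M^{\varepsilon},B)$ in $C([0,T];\mathbb{R}^{3d+k})$.

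\textbf{Step 2: Skorokhod representation and passage to the limit.} Extract a weakly convergent subsequence and invoke the Skorokhod representation theorem to produce a new stochastic basis $(\tilde\Omega,\tilde{\mathcal F},\tilde{\mathbb P},\{\tilde{\mathcal F}_t\})$ carrying $(\tilde X^{\varepsilon},\tilde K^{\varepsilon},\tilde M^{\varepsilon},\tilde B^{\varepsilon})$ with the original joint laws, converging a.s.\ uniformly on $[0,T]$ to $(\tilde X,\tilde K,\tilde M,\tilde B)$. A L\'evy characterization shows that $\tilde B$ is an $\tilde{\mathcal F}_t$-Brownian motion, and the identity $\tilde M_t=\int_0^t f(s,\tilde X_s)\,ds+\int_0^t g(s,\tilde X_s)\,d\tilde B_s$ is preserved in the limit by stability of stochastic integrals. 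On the oblique-subgradient side, $\int_0^t H(\tilde X^{\varepsilon}_s)\,d\tilde K^{\varepsilon}_s\to\int_0^t H(\tilde X_s)\,d\tilde K_s$ pathwise via Helly--Bray, since $\left\updownarrow\tilde K^{\varepsilon}\right\updownarrow_T$ remains bounded and $\tilde X^{\varepsilon}\to\tilde X$ uniformly. Passing to the $\liminf$ in the Yosida variational inequality $\int_s^t\langle y(r)-J_{\varepsilon}\tilde X^{\varepsilon}_r,d\tilde K^{\varepsilon}_r\rangle+\int_s^t\varphi(J_{\varepsilon}\tilde X^{\varepsilon}_r)\,dr\leq\int_s^t\varphi(y(r))\,dr$, combined with $\|J_{\varepsilon}\tilde X^{\varepsilon}-\tilde X^{\varepsilon}\|_{L^2}\to 0$ (from Step~1) and lower-semicontinuity of $\varphi$, identifies $d\tilde K_r\in\partial\varphi(\tilde X_r)(dr)$ exactly as in the concluding part of the proof of Theorem~\ref{oSP-t1}.

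The main obstacle is the end of Step~2: identifying the limit of the stochastic integral $\int_0^{\cdot}g(s,\pi_D\tilde X^{\varepsilon}_s)\,d\tilde B^{\varepsilon}_s$ on the new space when $g$ is merely Carath\'eodory in $x$. Lipschitz-based $L^2$-stability arguments are unavailable, and one has to rely either on a Kurtz--Protter type stability theorem or on a direct Vitali argument; in both cases the fourth-moment assumption $\int_0^T(g^{\#})^4\,dt<\infty$ is essential, providing the $L^2$-uniform integrability of the integrands needed to upgrade convergence in law of the integrals to convergence in probability, thereby closing the passage to the limit inside the stochastic integral.
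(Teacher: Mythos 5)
Your route is genuinely different from the paper's, and it has concrete gaps. The paper does \emph{not} penalize the subdifferential at the stochastic level: it keeps the exact multivalued term $H\partial\varphi$ and instead regularizes the \emph{input}, replacing $f(s,X_s)$ and the stochastic integral by their time--delayed and time--averaged versions $f(s,\pi_D(X^n_{s-1/n}))$ and $n\int_{t-1/n}^t\bigl[\int_0^s g(r,\pi_D(X^n_{r-1/n}))dB_r\bigr]ds$, so that $M^n$ is a $C^1$ progressively measurable process and the approximating equation is solved $\omega$-by-$\omega$ by the deterministic Corollary of Section~2. This has two payoffs you lose: the pair $(X^n,K^n)$ is pathwise an exact element of $\mathcal{SP}(H\partial\varphi;x_0,M^n)$, so Proposition~\ref{p1-apri-estim} applies literally; and the limit inclusion $d\bar K\in\partial\varphi(\bar X)(dr)$ is obtained from the exact variational inequality via Proposition~\ref{ch1-lsc-SI}, with no Yosida error to control.

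The gaps in your version are the following. First, your invocation of Proposition~\ref{p1-apri-estim} is not legitimate: for the penalized SDE one has $dK^{\varepsilon}_t=\nabla\varphi_{\varepsilon}(X^{\varepsilon}_t)\,dt\in\partial\varphi_{\varepsilon}(X^{\varepsilon}_t)\,dt=\partial\varphi(J_{\varepsilon}X^{\varepsilon}_t)\,dt$, and $X^{\varepsilon}$ need not take values in $\overline{Dom(\varphi)}$, so $(X^{\varepsilon},K^{\varepsilon})\notin\mathcal{SP}(H\partial\varphi;x_0,M^{\varepsilon})$ and the modulus estimate you quote must instead be rederived for the penalized system (the $L^2$ bound on $\nabla\varphi_{\varepsilon}(X^{\varepsilon})$ plus Cauchy--Schwarz does give $\left\updownarrow K^{\varepsilon}\right\updownarrow_t-\left\updownarrow K^{\varepsilon}\right\updownarrow_s\leq C\sqrt{t-s}$, but this needs to be said, not cited). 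Second, applying It\^o's formula to $\varphi_{\varepsilon}(X^{\varepsilon}_t)$ in the presence of the diffusion term requires second derivatives of $\varphi_{\varepsilon}$, which a Moreau--Yosida regularization of a general l.s.c.\ convex $\varphi$ does not possess ($\nabla\varphi_{\varepsilon}$ is only Lipschitz); the deterministic chain (\ref{ea-ini})--(\ref{est1}) needs no second-order term, so it does not transfer verbatim, and you would need a further smoothing of $\varphi_{\varepsilon}$ or a generalized It\^o formula. Third, you leave the identification of the limit stochastic integral as an open ``main obstacle''; in fact the paper's Lemma~\ref{ch2-p-conv} closes exactly this point, requiring only convergence of the integrands in $L^2(0,T)$ in probability, which follows from the a.s.\ uniform convergence of $\tilde X^{\varepsilon}$ after Skorokhod representation, continuity of $g$ in $x$, and domination by $g^{\#}\in L^2(0,T)$ --- no Kurtz--Protter machinery or fourth moments are needed for this particular step (the fourth-power bound on $g^{\#}$ is used for tightness of $M^n$, not for the limit identification). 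As written, your proposal does not constitute a complete proof.
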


\begin{proof}[{\bf Proof}]
The main ideas of the proof come from Rascanu \cite{Rascanu:10}. We extend $%
f\left( t,x\right) =0$ and $g\left( t,x\right) =0,$ for $t<0$.

{\bf{\it{Step 1.}}{\it Approximating problem.}}$\smallskip $

Let $0<\varepsilon \leq 1$ and consider the approximating equation%
\begin{equation}
\left\{
\begin{array}{l}
X_{t}^{n}=x_{0},\quad \textrm{if }t<0,\medskip \\
X_{t}^{n}+\displaystyle\int_{0}^{t}H\left( X_{t}^{n}\right)
dK_{t}^{n}=x_{0}+M_{t}^{n},\quad t\geq 0,\smallskip \\
dK_{t}^{n}\in \partial \varphi \left( X_{t}^{n}\right) dt,%
\end{array}%
\right.  \label{oea_stoch}
\end{equation}%
where%
\begin{align*}
M_{t}^{n}& =\displaystyle\int_{0}^{t}f(s,\mathbb{\pi }%
_{D}(X_{s-1/n}^{n}))ds+n\displaystyle\int_{t-1/n}^{t}\left[ \displaystyle%
\int_{0}^{s}g(r,\mathbb{\pi }_{D}(X_{r-1/n}^{n}))dB_{r}\right] ds \\
& =\displaystyle\int_{0}^{t}f(s,\mathbb{\pi }_{D}(X_{s-1/n}^{n}))ds+%
\displaystyle\int_{0}^{1}\left[ \displaystyle\int_{0}^{t-\frac{1}{n}+\frac{1%
}{n}u}g(r,\mathbb{\pi }_{D}(X_{r-1/n}^{n}))dB_{r}\right] du
\end{align*}%
and $\mathbb{\pi }_{D}\left( x\right) $ is the orthogonal projection of $x$
on $D=\overline{Dom\left( \varphi \right) }.$ Since $M^{n}$ is a $C^{1}-$%
continuous progressively measurable stochastic process, then by Corollary $%
\mathbf{1}$, the approximating equation (\ref{oea_stoch}) has a unique
solution $\left( X^{n},K^{n}\right) $ of continuous progressively measurable
stochastic processes.$\smallskip $

{\bf{\it{Step 2.}}{\it Tightness.}}$\smallskip $

Let $T\geq 0$ be arbitrary fixed. We will point out the main reasonings of this step.

\begin{itemize}
\item Since, by standard arguments,%
\begin{align*}
& \mathbb{E}\left[ \sup\limits_{0\leq \theta \leq \varepsilon }\left\vert
M_{t+\theta }^{n}-M_{t}^{n}\right\vert ^{4}\right] \\
& \leq 8\left( \displaystyle\int_{t}^{t+\varepsilon }f^{\#}\left( r\right)
dr\right) ^{4}+8\displaystyle\int_{0}^{1}\mathbb{E}\sup_{0\leq \theta \leq
\varepsilon }\left( \displaystyle\int_{t-\frac{1}{n}+\frac{1}{n}u}^{t+\theta
-\frac{1}{n}+\frac{1}{n}u}g(r,\mathbb{\pi }_{D}(X_{r-1/n}^{n}))dBr\right)
^{4}du \\
& \leq 8\varepsilon \left( \displaystyle\int_{t}^{t+\varepsilon
}|f^{\#}\left( r\right) |^{2}dr\right) ^{2}+C\displaystyle\int_{0}^{1}\left( %
\displaystyle\int_{t-\frac{1}{n}+\frac{1}{n}u}^{t+\varepsilon -\frac{1}{n}+%
\frac{1}{n}u}|g^{\#}\left( r\right) |^{2}dr\right) ^{2}du \\
& \leq 8\varepsilon \left( \displaystyle\int_{t}^{t+\varepsilon
}|f^{\#}\left( r\right) |^{2}dr\right) ^{2}+C\varepsilon \displaystyle%
\int_{0}^{1}\left( \displaystyle\int_{t-\frac{1}{n}+\frac{1}{n}%
u}^{t+\varepsilon -\frac{1}{n}+\frac{1}{n}u}|g^{\#}\left( r\right)
|^{4}dr\right) du \\
& \leq C^{\prime }\varepsilon \times \sup \left\{ \left( \displaystyle%
\int_{s}^{\tau }|f^{\#}\left( r\right) |^{2}dr\right) ^{2}+\displaystyle%
\int_{s}^{\tau }|g^{\#}\left( r\right) |^{4}dr;0\leq s<\tau \leq T,\;\;\tau
-s\leq \varepsilon \right\},
\end{align*}
in conformity with Proposition \ref{ch1-p1-tight} the family of laws of $\left\{
M^{n}:n\geq 1\right\} $ is tight on $C\left( \left[ 0,T\right];\mathbb{R}%
^{d}\right)$.

\item We now show that the family of laws of the random variables $%
U^{n}=\left( X^{n},K^{n},\left\updownarrow K^{n}\right\updownarrow \right) $
is tight on $C\left( \left[ 0,T\right] ;\mathbb{R}^{d}\right) \times C\left( %
\left[ 0,T\right] ;\mathbb{R}^{d}\right) \times C\left( \left[ 0,T\right] ;%
\mathbb{R}\right) \left[ =C\left( \left[ 0,T\right] ;\mathbb{R}%
^{2d+1}\right) \right].$ From Proposition \ref{p1-apri-estim} we deduce%
\begin{align*}
\left\Vert U^{n}\right\Vert _{T}& \leq C_{T}\left( \left\Vert
M^{n}\right\Vert _{T}\right), \\
\mathbf{m}_{U^{n}}\left( \varepsilon \right) & \leq C_{T}\left( \left\Vert
M^{n}\right\Vert _{T}\right) \times \sqrt{\varepsilon +\mathbf{m}%
_{M^{n}}\left( \varepsilon \right)},
\end{align*}%
and, from Lemma \ref{ch1-p2-tight}, it follows that $\left\{ U^{n};n\in
\mathbb{N}^{\ast }\right\} $ is tight on $C\left(\left[ 0,T\right];\mathbb{%
R}^{2d+1}\right).$

\item By the Prohorov theorem there exists a subsequence such that, as $%
n\rightarrow \infty,$%
$$
\left( X^{n},K^{n},\left\updownarrow K^{n}\right\updownarrow,B\right)
\rightarrow \left( X,K,V,B\right),\quad \textrm{in law}
$$
on $C\left( \left[ 0,T\right] ;\mathbb{R}^{2d+1+k}\right)$ and, by the
Skorohod theorem, we can choose a probability space $\left( \Omega,\mathcal{%
F},\mathbb{P}\right) $ and some random quadruples $(\bar{X}^{n},\bar{K}^{n},%
\bar{V}^{n},\bar{B}^{n})$, $(\bar{X},\bar{K},\bar{V},\bar{B})$ defined on $%
\left( \Omega ,\mathcal{F},\mathbb{P}\right)$, having the same laws as
resp. $\left( X^{n},K^{n},\left\updownarrow K^{n}\right\updownarrow
,B\right) $ and $(X,K,V,B),$ such that, in $C\left(\left[0,T\right];\mathbb{%
R}^{2d+1+k}\right)$, as $n\rightarrow \infty,$%
$$
(\bar{X}^{n},\bar{K}^{n},\bar{V}^{n},\bar{B}^{n}){\xrightarrow[]{\mathbb{P}-a.s.}}(\bar{X},\bar{K},\bar{V},\bar{B}).
$$

\item Remark that, by Lemma \ref{ch2-p-conv}, $(\bar{B}^{n},\{\mathcal{F}%
_{t}^{\bar{X}^{n},\bar{K}^{n},\bar{V}^{n},\bar{B}^{n}}\}),n\geq 1,\;$and $(%
\bar{B},\{\mathcal{F}_{t}^{\bar{X},\bar{K},\bar{V},\bar{B}}\})$ are $\mathbb{%
R}^{k}-$Brownian motion.
\end{itemize}

{\bf{\it{Step 3.}}{\it Passing to the limit.}}$\smallskip$

Since we have $\left( X^{n},K^{n},\left\updownarrow
K^{n}\right\updownarrow,B\right) \rightarrow (\bar{X},\bar{K},\bar{V},\bar{B})$ in law, then by Proposition \ref{ch1-lsc-SI}, we deduce that, for all $
0\leq s\leq t,$ $\mathbb{P}-a.s.,$
\begin{equation}
\begin{array}{c}
\bar{X}_{0}=x_{0}~,\quad \quad \bar{K}_{0}=0,\quad \quad \bar{X}_{t}\in
E,\medskip \\
\left\updownarrow \bar{K}\right\updownarrow _{t}-\left\updownarrow \bar{K}%
\right\updownarrow _{s}\leq \bar{V}_{t}-\bar{V}_{s}\quad \textrm{and}\quad 0=%
\bar{V}_{0}\leq \bar{V}_{s}\leq \bar{V}_{t}.%
\end{array}
\label{sp-24}
\end{equation}%
Moreover, since for all $0\leq s<t$,$\;n\in \mathbb{N}^{\ast }$%
$$
\int_{s}^{t}\varphi \left( X_{r}^{n}\right) dr\leq \int_{s}^{t}\varphi
\left( y\left( r\right) \right) dr-\int_{s}^{t}\left\langle y\left( r\right)
-X_{r}^{n},dK_{r}^{n}\right\rangle \;\;a.s.,
$$
then, by Proposition \ref{ch1-lsc-SI}, we infer
\begin{equation}
\displaystyle\int_{s}^{t}\varphi \left( \bar{X}_{r}\right) dr\leq %
\displaystyle\int_{s}^{t}\varphi \left( y\left( r\right) \right) dr-%
\displaystyle\int_{s}^{t}\left\langle y\left( r\right) -\bar{X}_{r},d\bar{K}%
_{r}\right\rangle.  \label{sp-25}
\end{equation}%
Hence, based on (\ref{sp-24}) and (\ref{sp-25}), we have
$$d\bar{K}_{r}\in \partial \varphi \left( \bar{X}_{r}\right)\left( dr\right).$$

\noindent Using the Lebesgue theorem and, once again Lemma \ref{ch2-p-conv},
we infer for $n\rightarrow \infty $,%
\begin{align*}
\bar{M}_{\cdot }^{n}& =x_{0}+\displaystyle\int_{0}^{\cdot }f(s,\mathbb{\pi }%
_{D}(\bar{X}_{s-1/n}^{n}))ds+n\displaystyle\int_{\cdot -1/n}^{\cdot }\left[ %
\displaystyle\int_{0}^{s}g(r,\mathbb{\pi }_{D}(\bar{X}_{r-1/n}^{n}))dB_{r},%
\right] ds \\
& \longrightarrow \bar{M}_{\cdot }=x_{0}+\displaystyle\int_{0}^{\cdot }f(s,%
\bar{X}_{s})ds+\displaystyle\int_{0}^{\cdot }g(s,\bar{X}_{s})d\bar{B}%
_{s},\quad \textrm{in }S_{d}^{0}\left[ 0,T\right],
\end{align*}
where $S_{d}^{0}\left[ 0,T\right]$ is the space of progressively measurable continuous stochastic processes defined in Annex, Section 4.3.{\smallskip}

\noindent By Proposition \ref{ch3-c4-cont} it follows that the probability laws equality holds%
$$
\mathcal{L}\left( \bar{X}^{n},\bar{K}^{n},\bar{B}^{n},\bar{M}^{n}\right) =%
\mathcal{L}\left( X^{n},K^{n},B^{n},M^{n}\right) \quad \textrm{on\ }C(\mathbb{R}%
_{+};\mathbb{R}^{d+d+k+d}),
$$
where by $\mathcal{L}(\cdot)$ we mean the probability law of the random variable.

Since, for every $t\geq 0,$%
$$
X_{t}^{n}+\displaystyle\int_{0}^{t}H\left( X_{s}^{n}\right)
dK_{s}^{n}-M_{t}^{n}=0,\;\;a.s.,
$$
then, by Proposition \ref{ch1-lsc-SI}, we have%
$$
\bar{X}_{t}^{n}+\displaystyle\int_{0}^{t}H\left( \bar{X}_{s}^{n}\right) d%
\bar{K}_{s}^{n}-\bar{M}_{t}^{n}=0,\;\;a.s.
$$
Letting $n\rightarrow \infty,$%
$$
\bar{X}_{t}+\displaystyle\int_{0}^{t}H\left( \bar{X}_{s}\right) d\bar{K}_{s}-%
\bar{M}_{t}=0,\;\;a.s.,
$$
that is, $\mathbb{P}-a.s.,$%
$$
\bar{X}_{t}+\displaystyle\int_{0}^{t}H\left(\bar{X}_{s}\right)d\bar{K}%
_{s}=x_{0}+\displaystyle\int_{0}^{t}f\left(s,\bar{X}_{s}\right)ds+%
\displaystyle\int_{0}^{t}g\left(s,\bar{X}_{s}\right) d\bar{B}_{s},\;\forall
~t\in \left[ 0,T\right].
$$

Consequently $(\bar{\Omega},\mathcal{\bar{F}},\mathbb{\bar{P}},\mathcal{F}%
_{t}^{\bar{B},\bar{X}},\bar{X}_{t},\bar{K}_{t},\bar{B}_{t})_{t\geq 0}$ is a
weak solution of the SVI (\ref{oSP-eq2}). The proof is complete.\hfill
\end{proof}

\begin{theorem}
\label{path-w-uni} If the assumptions (\ref{osp-h0-A}), (\ref{osp-h2}), (\ref%
{osp-h3}), (\ref{ob-h6}) and (\ref{ob-h7}) are satisfied, then the SVI (\ref%
{oSP-eq2}) has a unique strong solution $\left( X,K\right) \in
S_{d}^{0}\times S_{d}^{0}.$
\end{theorem}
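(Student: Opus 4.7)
The approach I would take is to establish pathwise uniqueness and then invoke the Yamada--Watanabe principle, since the weak existence part is already delivered by Theorem \ref{weak-ex-OSVI}. Pathwise uniqueness immediately gives uniqueness of a strong solution, and combined with weak existence it upgrades the weak solution to a strong one.

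For pathwise uniqueness, let $(X,K)$ and $(\hat X,\hat K)$ be two strong solutions on the same stochastic basis driven by the same Brownian motion $B$. I would imitate the deterministic argument of Proposition \ref{oSP-p1-uniq} in the stochastic setting. Introduce the symmetric, uniformly positive definite matrix $Q(t)=[H(X_t)]^{-1}+[H(\hat X_t)]^{-1}$ and apply It\^{o}'s formula either to $|u(t)|^{2}$ with $u(t)=Q^{1/2}(t)(X_t-\hat X_t)$ or directly to the quadratic form $\langle Q(t)(X_t-\hat X_t),X_t-\hat X_t\rangle$. The crucial algebraic identity (\ref{eui_1}) recasts the oblique-subgradient terms $Q[H(\hat X)d\hat K-H(X)dK]$ as a Lipschitz-controlled remainder plus $2[d\hat K-dK]$; the monotonicity of $\partial\varphi$ then yields
\begin{equation*}
\langle X_r-\hat X_r,\,2(dK_r-d\hat K_r)\rangle\leq 0,
\end{equation*}
exactly as in the deterministic case.

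The result is a stochastic differential inequality, after localization by $\tau_N=\inf\{t:V(t)\geq N\}$ with
\begin{equation*}
V(t)=\updownarrow X\updownarrow_t+\updownarrow\hat X\updownarrow_t+\updownarrow K\updownarrow_t+\updownarrow\hat K\updownarrow_t+\int_{0}^{t}\mu(s)\,ds,
\end{equation*}
of the form $d|u(t)|^{2}\leq C|u(t)|^{2}dV(t)+C\ell^{2}(t)|u(t)|^{2}dt+dN_t$, where $N_t$ is a local martingale arising from the stochastic integrals and the Lipschitz condition (\ref{ob-h7}-$(ii)$) controls the quadratic-variation contribution from $g(s,X_s)-g(s,\hat X_s)$. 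Taking expectation over $[0,t\wedge\tau_N]$ and applying Gronwall's inequality forces $\mathbb{E}|u(t\wedge\tau_N)|^{2}=0$. Letting $N\to\infty$ (using that $V(t)<\infty$ a.s.\ by Proposition \ref{p1-apri-estim}) gives $X\equiv\hat X$ up to indistinguishability, whence $K\equiv\hat K$ by reading off the equation. The membership $(X,K)\in S_d^0\times S_d^0$ follows from Proposition \ref{p1-apri-estim} applied pathwise to $M_t=\int_0^t f(s,X_s)ds+\int_0^t g(s,X_s)dB_s$, combined with Burkholder--Davis--Gundy to bound $\mathbb{E}\|M\|_T^{p}$.

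The main obstacle is the rigorous handling of the It\^{o} correction from $Q^{1/2}(t)$ (or, equivalently, of the Hessian of the quadratic form in $(X,\hat X)$), which involves second derivatives of $H^{-1}$ along the martingale parts of $X$ and $\hat X$. This is precisely where assumption (\ref{osp-h0-A}) together with $H\in C_b^{2}$ is used; the cross terms between these corrections and the oblique-reflection part must be absorbed into $dV(t)$, which is delicate because the total variation of $K$ interacts with the Brownian quadratic variation in a way that has no counterpart in the classical SVI setting without the oblique factor $H$.
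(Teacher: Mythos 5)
Your overall architecture coincides with the paper's: weak existence from Theorem \ref{weak-ex-OSVI} plus pathwise uniqueness, upgraded to a strong solution via the Yamada--Watanabe/Ikeda--Watanabe theorem, with pathwise uniqueness proved by applying It\^{o}'s formula to $U_r=Q_r^{1/2}(X_r-\hat X_r)$, $Q_r=H^{-1}(X_r)+H^{-1}(\hat X_r)$, using the identity (\ref{eui_1}) and the monotonicity (\ref{osp-2}) to discard the pure subgradient difference. (A minor slip: the term that monotonicity kills is $2\langle X_r-\hat X_r,\,d\hat K_r-dK_r\rangle\le 0$; as written, your inequality has the opposite order of $K$ and $\hat K$, which would contradict (\ref{osp-2}).)

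The one step that does not work as written is the closure of the estimate. After localizing at $\tau_N=\inf\{t:V(t)\ge N\}$ and taking expectations you get $\mathbb{E}\,|u(t\wedge\tau_N)|^2\le C\,\mathbb{E}\int_0^{t\wedge\tau_N}|u(s)|^2\,dV(s)$, but $dV$ is a \emph{random} measure (it contains $d\!\updownarrow\! K\!\updownarrow$, $d\!\updownarrow\!\hat K\!\updownarrow$ and the variation of the bounded-variation part of $dQ^{1/2}$), so this is not a deterministic Gronwall inequality, and the crude bound $\int_0^{t\wedge\tau_N}|u|^2\,dV\le N\sup_{s\le t\wedge\tau_N}|u(s)|^2$ only yields $\phi\le CN\phi$, which is vacuous. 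The repair is to keep the weight pathwise: applying It\^{o}'s formula to $e^{-2V_t}|U_t|^2$ (or, as the paper does via Proposition \ref{AnexC-p0-fsi}, to the bounded functional $e^{-2V_t}|U_t|^2/(1+e^{-2V_t}|U_t|^2)$, which sidesteps all integrability issues) produces a nonnegative local supermartingale started at $0$, hence identically $0$, and then $U\equiv 0$ because $e^{-2V}>0$ a.s. This is exactly the stochastic analogue of the inequality (\ref{AnC-dxRNV}) that you already invoke in the deterministic Proposition \ref{oSP-p1-uniq}; once it is substituted for the expectation-plus-Gronwall step, your argument matches the paper's. The It\^{o} correction from $Q^{1/2}$ that you flag as the main obstacle is handled just as you suggest: the decomposition $dQ_r^{1/2}=dN_r+\sum_j\beta_r^{(j)}dB_r^{(j)}$, available since $H\in C_b^2$ satisfies (\ref{osp-h0-A}), lets all cross terms be absorbed into $dV_r$.
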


\begin{proof}[{\bf Proof}]
It is sufficient to prove the {\it pathwise uniqueness}, since by Theorem
1.1, page 149, from Ikeda \& Watanabe \cite{Ikeda/Watanabe:81} {\it the
existence of a weak solution} and {\it the pathwise uniqueness} implies
the existence of a strong solution.\smallskip

Let $\left( X,K\right)$, $(\hat{X},\hat{K})\in S_{d}^{0}\times S_{d}^{0}$
two solutions of the SVI with oblique reflection (\ref{oSP-eq2}). Consider
the symmetric and strict positive matrix
$$
Q_{r}=H^{-1}\left( X_{r}\right) +H^{-1}(\hat{X}_{r}).
$$
We have that%
$$
dQ_{r}^{1/2}=dN_{r}+\displaystyle\sum_{j=1}^{k}\beta _{r}^{\left( j\right)
}dB_{r}^{\left( j\right)},
$$
where $N$ is a $\mathbb{R}^{d\times d}-$valued $\mathcal{P}-$measurabe bounded variation continuous stochastic process
(for short, m.b-v.c.s.p.), $%
N_{0}=0$ and, for each $j\in \overline{1,k}$, $\beta ^{\left( j\right)}$ is
a $\mathbb{R}^{d\times d}-$valued $\mathcal{P}-$measurable stochastic process (for short, m.s.p.) such that $%
\displaystyle\int_{0}^{T}|\beta _{r}^{\left( j\right) }|^{2}dr<\infty$, $%
a.s.,$ for all $T>0.$ \newline
Letting%
$$
U_{r}=Q_{r}^{1/2}(X_{r}-\hat{X}_{r}),
$$
then%
\begin{align*}
dU_{r}& =\left[ dQ_{r}^{1/2}\right] (X_{r}-\hat{X}_{r})+Q_{r}^{1/2}d(X_{r}-%
\hat{X}_{r})+\displaystyle\sum_{j=1}^{k}\beta _{r}^{\left(j\right)
}(g(r,X_{r})-g(r,\hat{X}_{r}))e_{j} \\
& =d\mathcal{K}_{r}+\mathcal{G}_{r}dB_{r},
\end{align*}%
where%
\begin{align*}
d\mathcal{K}_{r}& =\left( dN_{r}\right) Q_{r}^{-1/2}U_{r}+Q_{r}^{1/2}\left[
H(\hat{X}_{r})d\hat{K}_{r}-H\left( X_{r}\right) dK_{r}\right] \\
& +Q_{r}^{1/2}\left[ f\left( r,X_{r}\right) -f(r,\hat{X}_{r})\right] dr+%
\displaystyle\sum_{j=1}^{k}\beta _{r}^{\left( j\right) }(g(r,X_{r})-g(r,\hat{%
X}_{r}))e_{j},\medskip \\
\mathcal{G}_{r}& =\Gamma _{r}+Q_{r}^{1/2}\left[ g(r,X_{r})-g(r,\hat{X}_{r})%
\right],
\end{align*}%
and $\Gamma _{r}$ is a $\mathbb{R}^{d\times k}$ matrix with the columns $%
\beta _{r}^{\left( 1\right) }(X_{r}-\hat{X}_{r})$, \ldots\, $\beta
_{r}^{\left( k\right) }(X_{r}-\hat{X}_{r}).$

Using (\ref{eui_1}) and the properties of $H$ and $H^{-1}$, we have%
\begin{align*}
& \left\langle U_{r},Q_{r}^{1/2}\left[ H(\hat{X}_{r})d\hat{K}_{r}-H\left(
X_{r}\right) dK_{r}\right] \right\rangle \\
& =\left\langle X_{r}-\hat{X}_{r},\left( \left[ H\left( X_{r}\right) \right]
^{-1}-\left[ H(\hat{X}_{r})\right] ^{-1}\right) \left[ H(\hat{X}_{r})d\hat{K}%
_{r}+H\left( X_{r}\right) dK_{r}\right] \right\rangle \\
& -2\left\langle X_{r}-\hat{X}_{r},dK_{r}-d\hat{K}_{r}\right\rangle \\
& \leq bc|X_{r}-\hat{X}_{r}|^{2}(d\left\updownarrow
K\right\updownarrow _{r}+d\updownarrow \!\hat{K}\!\updownarrow _{r}).
\end{align*}

\noindent Hence, there exists a positive constant $C=C(b,c,r_{0})$ such that%
$$
\left\langle U_{r},d\mathcal{K}_{r}\right\rangle +\frac{1}{2}\left\vert
\mathcal{G}_{r}\right\vert ^{2}dt\leq |U_{r}|^{2}dV_{r},
$$
where%
$$
dV_{r}=C\times \left( \mu \left( r\right) dr+\ell ^{2}\left( r\right)
dr+d\left\updownarrow N\right\updownarrow _{r}+d\left\updownarrow
K\right\updownarrow _{r}+d\updownarrow \!\hat{K}\!\updownarrow _{r}\right) +C%
\displaystyle\sum_{j=1}^{k}|\beta _{r}^{\left( j\right) }|^{2}dr.
$$
By Proposition \ref{AnexC-p0-fsi} we infer%
$$
\mathbb{E}\frac{e^{-2V_{s}}\left\vert U_{s}\right\vert ^{2}}{%
1+e^{-2V_{s}}\left\vert U_{s}\right\vert ^{2}}\leq \mathbb{E}\frac{%
e^{-2V_{0}}\left\vert U_{0}\right\vert ^{2}}{1+e^{-2V_{0}}\left\vert
U_{0}\right\vert ^{2}}=0.
$$
Consequently,%
$$
Q_{s}^{1/2}(X_{s}-\hat{X}_{s})=U_{s}=0,\;\mathbb{P}-a.s.,\textrm{ for all }%
s\geq 0
$$
and, by the continuity of $X$ and $\hat{X},$ we conclude that, $\mathbb{P}%
-a.s.,$%
$$
X_{s}=\hat{X}_{s}\quad \textrm{for all\ }s\geq 0.
$$
\hfill
\end{proof}

\section{Annex}

For the clarity of the proofs from the main body of this article we will group in this section some useful results that are used along this paper.

\subsection{A priori estimates}

We give five lemmas with a priori estimates of the solutions $\left(
x,k\right) \in \mathcal{SP}\left( H\partial \varphi ;x_{0},m\right).$ These
lemmas and also theirs proofs are similar with those from the monograph of Pardoux $\&$ R\u{a}\c{s}canu
\cite{Pardoux/Rascanu:09}, but for the convenience of the reader we give here the proofs of the results
in this new framework.$%
\smallskip $

\begin{lemma}
If $\left( x,k\right) \in \mathcal{SP}\left( H\partial\varphi;x_{0},m\right) $
and $(\hat{x},\hat{k}) \in \mathcal{SP}\left( H\partial\varphi;\hat{x}_{0},\hat{m%
}\right), $ then for all $0\leq$ $s\leq t:$%
\begin{equation}
\displaystyle\int _{s}^{t}\left\langle x\left( r\right) -\hat{x}\left(
r\right),dk\left( r\right) -d\hat{k}\left( r\right) \right\rangle \geq0.
\label{osp-2}
\end{equation}
\end{lemma}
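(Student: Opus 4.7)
The inequality (\ref{osp-2}) is the classical monotonicity property of the subdifferential, transported to the measure-theoretic setting via the variational definition of $dk(r)\in\partial\varphi(x(r))(dr)$. The plan is simply to test each of the two variational inequalities against the other solution and add.

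More precisely, since by condition $(a)$ of the definition both $x$ and $\hat{x}$ lie in $C\left([0,T];\mathbb{R}^{d}\right)$ for any $T>s\lor t$, each of them is an admissible test function in the variational inequality satisfied by the other. First I would plug $y=\hat{x}$ into part $(d)$ of the definition applied to $(x,k)$, obtaining
$$\int_{s}^{t}\langle \hat{x}(r)-x(r),dk(r)\rangle +\int_{s}^{t}\varphi(x(r))dr\leq \int_{s}^{t}\varphi(\hat{x}(r))dr.$$
Then, symmetrically, I would plug $y=x$ into the variational inequality for $(\hat{x},\hat{k})$:
$$\int_{s}^{t}\langle x(r)-\hat{x}(r),d\hat{k}(r)\rangle +\int_{s}^{t}\varphi(\hat{x}(r))dr\leq \int_{s}^{t}\varphi(x(r))dr.$$
Adding the two displays, the $\varphi$-integrals cancel and one is left with
$$\int_{s}^{t}\langle \hat{x}(r)-x(r),dk(r)\rangle +\int_{s}^{t}\langle x(r)-\hat{x}(r),d\hat{k}(r)\rangle \leq 0,$$
which is exactly (\ref{osp-2}) after rearrangement.

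There is essentially no obstacle: the only thing to check is that the $\varphi$-integrals appearing after the substitutions are finite so that the cancellation is legitimate. This is immediate from the assumption (\ref{osp-h3})($iii$): since $x,\hat{x}$ are continuous with values in $\overline{Dom(\varphi)}=D$ (hence bounded on $[s,t]$), the Lipschitz-type bound $|\varphi(x)-\varphi(y)|\leq L+L|x-y|$ on $D$ forces $\varphi(x(\cdot))$ and $\varphi(\hat{x}(\cdot))$ to be bounded, and the cancellation is justified.
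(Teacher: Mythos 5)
Your proof is correct and is exactly the intended argument: the paper states this lemma without an explicit proof (deferring to the analogous computation in Pardoux--R\u{a}\c{s}canu), and the standard route is precisely to test condition $(d)$ of the definition for $(x,k)$ with $y=\hat{x}$, symmetrically for $(\hat{x},\hat{k})$ with $y=x$, and add. Your observation that the cancellation of the $\varphi$-integrals is legitimate because (\ref{osp-h3})($iii$) together with the continuity of $x,\hat{x}$ with values in $D$ makes $\varphi(x(\cdot))$ and $\varphi(\hat{x}(\cdot))$ bounded on $[s,t]$ is the right (and only) point that needed checking.
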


\noindent We recall the notation for modulus of continuity of a function $g:%
\left[ 0,T\right] \rightarrow \mathbb{R}^{d}:$%
$$
\mathbf{m}_{g}\left( \varepsilon \right) =\sup \left\{ \left\vert g\left(
u\right)-g\left( v\right) \right\vert :u,v\in \left[0,T\right],\;\left\vert u-v\right\vert \leq \varepsilon \right\}.
$$

\begin{lemma}
\label{l1-mc-x}Let the assumptions (\ref{osp-h0}), (\ref{osp-h0-A}), (\ref%
{osp-h2}) and (\ref{osp-h3}) be satisfied. If $\left( x,k\right) \in\mathcal{%
SP}\left( H\partial\varphi;x_{0},m\right) ,$ then for all $0\leq s\leq t\leq
T:$%
\begin{equation}
\begin{array}{r}
\mathbf{m}_{x}\left( t-s\right) \leq\left[ \left( t-s\right) +\mathbf{m}%
_{m}\left( t-s\right) +~\sqrt{\mathbf{m}_{m}\left( t-s\right) \left(
\left\updownarrow k\right\updownarrow _{t}-\left\updownarrow
k\right\updownarrow _{s}\right) }\right] \medskip \\
\times\exp\left\{ C\left[ 1+\left( t-s\right) +\left( \left\updownarrow
k\right\updownarrow _{t}-\left\updownarrow k\right\updownarrow _{s}+1\right)
\left( \left\updownarrow k\right\updownarrow _{t}-\left\updownarrow
k\right\updownarrow _{s}\right) \right] \right\},%
\end{array}
\label{ob1}
\end{equation}
where $C=C\left( b,c,L\right) >0.$
\end{lemma}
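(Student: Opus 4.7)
The plan is an energy-method estimate for $|x(r)-x(s)|^2$ on $r\in[s,t]$ via the Stieltjes chain rule, combined with the subdifferential inequality and Gronwall's lemma. The key difficulty is that $x\mapsto H(x)\partial\varphi(x)$ is not monotone --- the obstruction emphasised in the introduction --- so the classical argument for a convex Skorohod problem must be adapted. My idea is to apply the chain rule to the \emph{weighted} squared norm
$$\|v\|_H^2 := \langle v,H^{-1}(x(s))\,v\rangle,$$
which is equivalent to $|v|^2$ since $c^{-1}I\le H^{-1}(x(s))\le cI$, and to exploit the algebraic identity
$$H^{-1}(x(s))\,H(x(u)) = I+H^{-1}(x(s))\bigl[H(x(u))-H(x(s))\bigr].$$
The leading $I$ contributes the ``clean'' integral $\int_s^r\langle x(u)-x(s),dk(u)\rangle$ to which the subdifferential inequality applies, while the remainder is of size $|x(u)-x(s)|\,d\updownarrow k\updownarrow_u$ by the Lipschitz continuity of $H$.

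Concretely, I would first assume $m\in C^1$ (by density; passing to the limit at the end is legitimate since the target bound depends on $m$ only through $\mathbf{m}_m$), apply the Stieltjes chain rule to $\|x(r)-x(s)\|_H^2$, and substitute $dx=dm-H(x)\,dk$. Integration by parts in the $dm$-integral (using $dx=dm-H(x)\,dk$ once more to eliminate $dx$) rewrites it as a boundary term, a deterministic contribution $-\frac{1}{2}\|m(r)-m(s)\|_H^2$, and an integral $\int_s^r\langle m(u)-m(s),H^{-1}(x(s))H(x(u))\,dk(u)\rangle$. Next I apply the decomposition above to \emph{both} $dk$-integrals, invoke the subdifferential inequality at the constant test $y(\cdot)\equiv x(s)$ to obtain
$$\int_s^r\langle x(u)-x(s),dk(u)\rangle \ge \int_s^r[\varphi(x(u))-\varphi(x(s))]\,du \ge -L(r-s)-L\int_s^r|x(u)-x(s)|\,du$$
(which uses the Lipschitz-type hypothesis (\ref{osp-h3})($iii$)), and use Young's inequality to absorb cross terms of the form $\mathbf{m}_m(r-s)\,|x(u)-x(s)|$ and $|x(u)-x(s)|$ into $|x(u)-x(s)|^2$ and harmless remainders. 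Collecting yields an inequality of the form
$$|x(r)-x(s)|^2 \le A(r)+C\int_s^r|x(u)-x(s)|^2\bigl[du+d\updownarrow k\updownarrow_u\bigr],$$
with non-decreasing $A(r)=C\bigl[(r-s)+\mathbf{m}_m^2(r-s)(1+K_r)+\mathbf{m}_m(r-s)K_r\bigr]$, $K_r:=\updownarrow k\updownarrow_r-\updownarrow k\updownarrow_s$ and $C=C(b,c,L)$.

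Gronwall's inequality in Stieltjes form then delivers
$$|x(r)-x(s)|^2 \le A(r)\exp\bigl\{C[(r-s)+K_r]\bigr\}.$$
Taking square roots (via $\sqrt{a+b+c}\le\sqrt{a}+\sqrt{b}+\sqrt{c}$), the supremum over $r\in[s,t]$, and enlarging the constant to absorb auxiliary terms such as $\sqrt{r-s}$ and $\mathbf{m}_m(r-s)\sqrt{K_r}$ into an exponent of the stated form $C[1+(t-s)+(K+1)K]$, one recovers (\ref{ob1}). The principal obstacle is precisely this bookkeeping of the $H$-perturbation terms: the algebraic decomposition $H^{-1}(x(s))H(x(u))=I+[\text{remainder}]$ is what cleanly separates the monotone contribution (where the subdifferential inequality applies) from the Lipschitz perturbation due to the state-dependence of $H$.
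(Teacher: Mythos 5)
Your computational core is essentially the paper's own argument: a chain rule for an $H^{-1}$-weighted quadratic form, a decomposition that isolates the monotone contribution $\int_s^r\langle x(u)-x(s),dk(u)\rangle$ (then estimated through the subdifferential inequality with the constant test function $y\equiv x(s)$ and hypothesis (\ref{osp-h3})($iii$)), a perturbation term of order $|x(u)-x(s)|\,d\!\updownarrow\! k\!\updownarrow_u$ coming from the Lipschitz constant $b$ of $H^{-1}$, and a Stieltjes--Gronwall step. Freezing the weight at $x(s)$ instead of at $x(t)$ (the paper's choice) is even a small improvement: your remainder is directly $|x(u)-x(s)|^2\,d\!\updownarrow\! k\!\updownarrow_u$, so you avoid the paper's extra splitting $|x(u)-x(t)|\le|x(u)-x(s)|+|x(s)-x(t)|$ followed by Young and Cauchy--Schwarz, and Gronwall then gives an exponent of order $C[(t-s)+K]$, which implies the stated $C[1+(t-s)+(K+1)K]$.

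The genuine gap is the opening reduction ``assume $m\in C^{1}$ by density''. The lemma is an a priori estimate for an \emph{arbitrary given} solution $(x,k)\in\mathcal{SP}(H\partial\varphi;x_0,m)$ with $m$ merely continuous, and it is an ingredient of the existence proof (via Lemma \ref{oSP-l4-compact} and Theorem \ref{oSP-t1}); so you cannot invoke at this stage the existence of solutions for smooth approximations $m_\varepsilon$, and even granting it, nothing identifies the given $(x,k)$ as a limit of such solutions: uniqueness and continuous dependence (Proposition \ref{oSP-p1-uniq}) require $m\in BV_{loc}$ and are stated in the variation distance $\updownarrow m-\hat m\updownarrow$, not in the sup norm. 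Moreover the right-hand side of (\ref{ob1}) contains $\updownarrow k\updownarrow_t-\updownarrow k\updownarrow_s$, and this quantity cannot be passed to the limit in the useful direction (lower semicontinuity of the variation goes the wrong way for a bound that is increasing in $K$). The regularization is also unnecessary: you wanted $m\in C^1$ only because the chain rule applied to $\|x(r)-x(s)\|_H^2$ (and to $\|m(u)-m(s)\|_H^2$ inside your integration by parts) needs $x$, resp.\ $m$, to be of bounded variation, which fails for continuous $m$. The repair is exactly the paper's device: apply the chain rule to $v(r)=x(r)-m(r)-x(s)+m(s)$, which \emph{is} of bounded variation since $dv(r)=-H(x(r))\,dk(r)$; expanding $\|v(r)\|_H^2$ algebraically reproduces precisely your boundary term, the $-\|m(r)-m(s)\|_H^2$ contribution and the two $dk$-integrals, with no smoothness of $m$ required. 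With that substitution (and noting that the $\sqrt{t-s}$ produced by the $L(t-s)$ term is absorbed only up to the same harmless slack already present in the paper's own final step toward (\ref{ob1})), your argument goes through.
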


\begin{proof}[{\bf Proof.}]
Let $0\leq s\leq t$ and%
$$
h\left(t\right) =\left\langle H^{-1}\left( x\left( t\right) \right) \left[
x\left(t\right) -m\left( t\right) -x\left( s\right) +m\left( s\right) %
\right],x\left( t\right) -m\left( t\right) -x\left( s\right) +m\left(
s\right)\right\rangle.
$$
We have%
\begin{align*}
h\left( t\right) & =2\displaystyle\int_{s}^{t}\left\langle H^{-1}\left(
x\left( t\right) \right) \left[ x\left( r\right) -m\left( r\right) -x\left(
s\right) +m\left( s\right) \right] ,d\left[ x\left( r\right) -m\left(
r\right) -x\left( s\right) +m\left( s\right) \right] \right\rangle  \\
& =-2\displaystyle\int_{s}^{t}\left\langle H^{-1}\left( x\left( t\right)
\right) \left[ x\left( r\right) -m\left( r\right) -x\left( s\right) +m\left(
s\right) \right] ,H\left( x\left( r\right) \right) dk\left( r\right)
\right\rangle  \\
& =2\displaystyle\int_{s}^{t}\left\langle H^{-1}\left( x\left( t\right)
\right) \left[ m\left( r\right) -m\left( s\right) \right] ,H\left( x\left(
r\right) \right) dk\left( r\right) \right\rangle +2\displaystyle%
\int_{s}^{t}\left\langle x\left( s\right) -x\left( r\right) ,dk\left(
r\right) \right\rangle  \\
& +2\displaystyle\int_{s}^{t}\left\langle \left[ H^{-1}\left( x\left(
r\right) \right) -H^{-1}\left( x\left( t\right) \right) \right] \left[
x\left( r\right) -x\left( s\right) \right] ,H\left( x\left( r\right) \right)
dk\left( r\right) \right\rangle .
\end{align*}%
Since%
\begin{align*}
\displaystyle\int_{s}^{t}\left\langle x\left( s\right) -x\left( r\right)
,dk\left( r\right) \right\rangle & \leq \displaystyle\int_{s}^{t}\left[
\varphi \left( x\left( s\right) \right) -\varphi \left( x\left( r\right)
\right) \right] dr \\
& \leq L\left( t-s\right) +L\displaystyle\int_{s}^{t}\left\vert x\left(
s\right) -x\left( r\right) \right\vert dr \\
& \leq L\left( t-s\right) +\frac{L}{2}\left( t-s\right) +\frac{L}{2}%
\displaystyle\int_{s}^{t}\left\vert x\left( r\right) -x\left( s\right)
\right\vert ^{2}dr
\end{align*}%
and%
$$
\frac{1}{2c}\left\vert x\left( t\right) -x\left( s\right) \right\vert ^{2}-%
\frac{1}{c}\left\vert m\left( t\right) -m\left( s\right) \right\vert
^{2}\leq h\left( t\right),
$$
then%
\begin{align*}
\left\vert x\left( t\right) -x\left( s\right) \right\vert ^{2}& \leq 2~%
\mathbf{m}_{m}^{2}\left( t-s\right) +4c^{3}~\mathbf{m}_{m}\left( t-s\right)
\left( \left\updownarrow k\right\updownarrow _{t}-\left\updownarrow
k\right\updownarrow _{s}\right) +6cL\left( t-s\right)  \\
& +\displaystyle\int_{s}^{t}\left[ 2cL\left\vert x\left( r\right) -x\left(
s\right) \right\vert ^{2}dr+4bc^{2}\left\vert x\left( r\right) -x\left(
t\right) \right\vert \left\vert x\left( r\right) -x\left( s\right)
\right\vert d\left\updownarrow k\right\updownarrow _{r}\right] .
\end{align*}%
Here we continue the estimates by
\begin{align*}
& 4bc^{2}\displaystyle\int_{s}^{t}\left\vert x\left( r\right) -x\left(
t\right) \right\vert \left\vert x\left( r\right) -x\left( s\right)
\right\vert d\left\updownarrow k\right\updownarrow _{r} \\
& \leq 4bc^{2}\left\vert x\left( s\right) -x\left( t\right) \right\vert %
\displaystyle\int_{s}^{t}\left\vert x\left( r\right) -x\left( s\right)
\right\vert d\left\updownarrow k\right\updownarrow _{r}+4bc^{2}\displaystyle%
\int_{s}^{t}\left\vert x\left( r\right) -x\left( s\right) \right\vert
^{2}d\left\updownarrow k\right\updownarrow _{r} \\
& \leq \frac{1}{2}\left\vert x\left( s\right) -x\left( t\right) \right\vert
^{2}+\frac{1}{2}\left( 4bc^{2}\right) ^{2}\left( \displaystyle%
\int_{s}^{t}\left\vert x\left( r\right) -x\left( s\right) \right\vert
d\left\updownarrow k\right\updownarrow _{r}\right) ^{2} \\
& +4bc^{2}\displaystyle\int_{s}^{t}\left\vert x\left( r\right) -x\left(
s\right) \right\vert ^{2}d\left\updownarrow k\right\updownarrow _{r}
\end{align*}%
and we obtain%
\begin{align*}
& \left\vert x\left( t\right) -x\left( s\right) \right\vert ^{2} \\
& \leq 4~\mathbf{m}_{m}^{2}\left( t-s\right) +8c^{3}~\mathbf{m}_{m}\left(
t-s\right) \left( \left\updownarrow k\right\updownarrow
_{t}-\left\updownarrow k\right\updownarrow _{s}\right) +12cL\left(
t-s\right)  \\
& +4cL\displaystyle\int_{s}^{t}\left\vert x\left( r\right) -x\left( s\right)
\right\vert ^{2}dr+\left[ 16b^{2}c^{4}\left( \left\updownarrow
k\right\updownarrow _{t}-\left\updownarrow k\right\updownarrow _{s}\right)
+8bc^{2}\right] \displaystyle\int_{s}^{t}\left\vert x\left( r\right)
-x\left( s\right) \right\vert ^{2}d\left\updownarrow k\right\updownarrow
_{r}~.
\end{align*}%
By the Stieltjes-Gronwall inequality, from this last inequality, the
estimate (\ref{ob1}) follows.\hfill
\end{proof}

For the next result we first remark that, if $E\subset \mathbb{R}^{d}$ is a closed convex set such that%
$$
\exists \textrm{~}r_{0}>0,\;E_{r_{0}}\neq \emptyset \quad \textrm{and}\quad
h_{0}=\sup_{z\in E}dist\left( z,E_{r_{0}}\right) <\infty
$$
(in particular if $E$ is bounded), then for every $0<\delta \leq \frac{r_{0}%
}{2\left( 1+h_{0}\right)},$ $y\in E,$ $\hat{y}=\pi_{E_{r_{0}}}\left(
y\right),$ $v_{y}=\frac{1}{1+h_{0}}\left( \hat{y}-y\right)$ and for all $%
x\in E\cap \overline{B}\left(y,\delta \right)$ we have%
\begin{equation}
\overline{B}\left( x+v_{y},\delta \right) \subset \overline{B}\left( y+v_{y},%
\frac{r_{0}}{1+h_{0}}\right) \subset conv\left\{y,\overline{B}\left(\hat{y}%
,r_{0}\right) \right\} \subset E.  \label{osp-suibc}
\end{equation}

\begin{lemma}
\label{l1-oSp}Let the assumptions (\ref{osp-h0}), (\ref{osp-h0-A}), (\ref%
{osp-h2}) and (\ref{osp-h3}) be satisfied. If $\left( x,k\right) \in
\mathcal{SP}\left( H\partial \varphi ;x_{0},m\right) ,$ $0\leq s\leq t\leq T$
and%
$$
\sup_{r\in \left[ s,t\right] }\left\vert x\left( r\right) -x\left( s\right)
\right\vert \leq 2\delta _{0}=\frac{\rho _{0}}{2bc}\wedge \rho _{0}~,\quad
\textrm{with }\rho _{0}=\frac{r_{0}}{2\left( 1+r_{0}+h_{0}\right) },
$$
then%
\begin{equation}
\left\updownarrow k\right\updownarrow _{t}-\left\updownarrow
k\right\updownarrow _{s}\leq \frac{1}{\rho _{0}}\left\vert k\left( t\right)
-k\left( s\right) \right\vert +\frac{3L}{\rho _{0}}\left( t-s\right)
\label{osp-3}
\end{equation}%
and%
\begin{equation}
\left\vert x\left( t\right) -x\left( s\right) \right\vert +\left\updownarrow
k\right\updownarrow _{t}-\left\updownarrow k\right\updownarrow _{s}\leq ~%
\sqrt{t-s+\mathbf{m}_{m}\left( t-s\right)}\times e^{C_{T}\left(
1+\left\Vert m\right\Vert _{T}^{2}\right)},  \label{ob2}
\end{equation}%
where $C_{T}=C\left( b,c,r_{0},h_{0},L,T\right) >0.$
\end{lemma}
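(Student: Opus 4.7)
The plan is to establish (\ref{osp-3}) first, via an inward-direction test function built from (\ref{osp-suibc}), and then to combine it with the Skorohod identity and Lemma \ref{l1-mc-x} in order to derive (\ref{ob2}).

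For (\ref{osp-3}), since $bc\ge 1$ yields $2\delta_0\le \rho_0/(bc)\le \rho_0$ and $\rho_0\le r_0/(2(1+h_0))$, the hypothesis $\sup_{r\in[s,t]}|x(r)-x(s)|\le 2\delta_0$ lets me apply (\ref{osp-suibc}) with $y=x(s)$ and $\delta=\rho_0$. This produces an inward direction $v:=v_{x(s)}=\tfrac{1}{1+h_0}\bigl(\pi_{D_{r_0}}(x(s))-x(s)\bigr)$ with $|v|\le 1$, for which $\overline{B}(x(r)+v,\rho_0)\subset D$ for every $r\in[s,t]$. Writing the vector measure $k$ in polar form $dk(r)=\nu(r)\,d\!\updownarrow\! k\!\updownarrow_r$ and approximating the Borel-measurable $\nu$ in $L^{1}(d\!\updownarrow\! k\!\updownarrow)$ by continuous unit-norm maps $\widetilde{\nu}$, I plug the continuous test function $y(r):=x(r)+v+\rho_0\widetilde{\nu}(r)$ (which stays in $D$ by the above inclusion) into item~(d) of the defining subdifferential inequality. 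The left-hand side becomes $\langle v,k(t)-k(s)\rangle+\rho_0\int_{s}^{t}\langle\widetilde{\nu}(r),dk(r)\rangle$, and the right-hand side is dominated by $3L(t-s)$ via (\ref{osp-h3})-(iii) together with $|v+\rho_0\widetilde{\nu}|\le 2$. Passing $\widetilde{\nu}\to\nu$ and invoking $|v|\le 1$ then yields (\ref{osp-3}).

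For (\ref{ob2}), I apply $H^{-1}(x(s))$ to the identity $x(t)-x(s)+\int_{s}^{t}H(x(r))\,dk(r)=m(t)-m(s)$ after the split $H(x(r))=H(x(s))+(H(x(r))-H(x(s)))$, bounding the resulting error integral by $bc\cdot\sup_{r\in[s,t]}|x(r)-x(s)|\cdot(\!\updownarrow\! k\!\updownarrow_{t}-\updownarrow\! k\!\updownarrow_{s})$. Rather than using the crude hypothesis $\sup|x(r)-x(s)|\le 2\delta_0$ (which would make the coefficient in the resulting bound critically equal to $\rho_0$), I substitute the sharper estimate
\[
\sup_{r\in[s,t]}|x(r)-x(s)|\le e^{C_T}\bigl[(t-s)+\mathbf{m}_m(t-s)+\sqrt{\mathbf{m}_m(t-s)\,\beta}\bigr]
\]
coming from Lemma \ref{l1-mc-x}, with $\beta:=\updownarrow\! k\!\updownarrow_{t}-\updownarrow\! k\!\updownarrow_{s}$ and the exponential factor controlled by the a priori bound $\beta\le C_T(\|m\|_T)$ from Proposition \ref{p1-apri-estim}. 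Inserting the outcome into (\ref{osp-3}) and performing routine bookkeeping with Young's inequality $\sqrt{\mathbf{m}_m\beta}\le\tfrac{\varepsilon}{2}\beta+\tfrac{1}{2\varepsilon}\mathbf{m}_m$ to absorb the sublinear $\beta$-contribution, followed by a Stieltjes--Gronwall step, will deliver (\ref{ob2}) with the claimed exponential prefactor $e^{C_T(1+\|m\|_T^{2})}$.

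The decisive and delicate point is the second step: the naive use of $\sup|x(r)-x(s)|\le 2\delta_0$ produces a coefficient exactly $\rho_0$ in front of $\beta$ in the equation-based bound for $|k(t)-k(s)|$, and this, once fed into (\ref{osp-3}), cancels every piece of useful information about $\beta$. The remedy is to replace that crude bound by the strictly sublinear expression $\sqrt{\mathbf{m}_m\beta}$ supplied by Lemma \ref{l1-mc-x}; Young's inequality then converts the concavity in $\beta$ into a genuine contraction and produces the advertised square-root scaling $\sqrt{(t-s)+\mathbf{m}_m(t-s)}$. It is precisely this interplay between the hypothesis $\sup|x(r)-x(s)|\le 2\delta_0$ (needed for the test-function construction) and the sharper modulus estimate from Lemma \ref{l1-mc-x} (needed to break the criticality) that explains why the specific constant $\delta_0=\rho_0/(2bc)\wedge\rho_0$ is the right one.
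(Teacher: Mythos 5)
Your proof of (\ref{osp-3}) is fine and is essentially the paper's argument: the paper tests with $z(r)=x(r)+v_{y}+\rho_{0}\alpha(r)$, $y=x(s)$, $\|\alpha\|_{[s,t]}\le 1$, and takes the supremum over $\alpha$ (which directly produces $\left\updownarrow k\right\updownarrow_{t}-\left\updownarrow k\right\updownarrow_{s}$), while you reach the same point through the polar decomposition $dk=\nu\,d\!\left\updownarrow k\right\updownarrow$ and an approximation of $\nu$; this is only a cosmetic difference.

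The second half, however, contains a genuine gap, and it originates in a misreading of the constant. The hypothesis is $\sup_{r\in[s,t]}|x(r)-x(s)|\le 2\delta_{0}$ with $2\delta_{0}=\frac{\rho_{0}}{2bc}\wedge\rho_{0}$, so the ``naive'' estimate of the error integral gives a coefficient $bc\cdot 2\delta_{0}\le \rho_{0}/2$ in front of $\beta=\left\updownarrow k\right\updownarrow_{t}-\left\updownarrow k\right\updownarrow_{s}$, not $\rho_{0}$; after division by $\rho_{0}$ in (\ref{osp-3}) the absorption constant is $1/2$, there is no criticality, and this direct absorption is exactly what the paper does. It first obtains the crude bound $\beta\le \frac{2c}{\rho_{0}}|x(t)-x(s)|+\frac{2c}{\rho_{0}}\mathbf{m}_{m}(t-s)+\frac{6L}{\rho_{0}}(t-s)\le C_{1}(1+\|m\|_{T})$, then feeds this into the exponential of Lemma \ref{l1-mc-x} to control $\mathbf{m}_{x}(t-s)$, and finally plugs that back to get (\ref{ob2}). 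Your replacement scheme does not close: the bound (\ref{ob1}) of Lemma \ref{l1-mc-x} carries the factor $\exp\{C[1+(t-s)+(\beta+1)\beta]\}$, which depends on $\beta$ itself, and the only way you propose to tame it is ``the a priori bound $\beta\le C_{T}(\|m\|_{T})$ from Proposition \ref{p1-apri-estim}''. That is circular: Proposition \ref{p1-apri-estim} is Lemma \ref{oSP-l4-compact}, whose proof uses precisely Lemma \ref{l1-oSp} (and Lemma \ref{sp-l2}). Without that a priori bound the Young-inequality absorption of $\sqrt{\mathbf{m}_{m}\beta}$ cannot be performed with a uniform small constant, and the advertised prefactor $e^{C_{T}(1+\|m\|_{T}^{2})}$ is not obtained. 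The fix is simply to restore the step you discarded: absorb $\tfrac12\beta$ using $bc\cdot 2\delta_{0}\le\rho_{0}/2$, deduce $\beta\le C_{1}(1+\|m\|_{T})$, and only then invoke Lemma \ref{l1-mc-x}.
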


\begin{proof}
Remark first that $D_{r_{0}}\subset D_{\delta _{0}}~.$ Let $\alpha \in
C\left( \left[ 0,\infty \right[ ;\mathbb{R}^{d}\right),$ $\left\Vert \alpha
\right\Vert _{\left[ s,t\right] }\leq 1,$ be arbitrary. Consider $y=x\left(
s\right) \in D,$ $\hat{y}=\pi_{D_{r_{0}}}\left( y\right) $ and%
$$
v_{y}=\dfrac{1}{1+h_{0}}\left( \hat{y}-y\right).
$$

Let $z\left( r\right) =x\left( r\right) +v_{y}+\rho _{0}\alpha \left(
r\right) ,$ $r\in \left[ s,t\right] $. Since $\left\vert x\left( r\right)
-y\right\vert \leq 2\delta _{0}\leq \rho _{0},$ then%
\begin{align*}
x\left( r\right) +v_{y}+\rho _{0}\alpha \left( r\right) & \in \overline{B}%
\left( x\left( r\right) +v_{y},\rho _{0}\right) \\
& \subset \overline{B}\left( y+v_{y},\frac{r_{0}}{1+h_{0}}\right) \\
& \subset D.
\end{align*}%
Remark that%
$$
\left\vert z\left( r\right) -x\left( r\right) \right\vert \leq \frac{h_{0}}{%
1+h_{0}}+\rho _{0}\leq 2
$$
and%
$$
\left\vert \varphi \left( z\left( r\right) \right) -\varphi \left( x\left(
r\right) \right) \right\vert \leq 3L.
$$
Therefore%
\begin{align*}
\rho _{0}\displaystyle\int_{s}^{t}\left\langle \alpha \left( r\right)
,dk\left( r\right) \right\rangle & \leq -\displaystyle\int_{s}^{t}\left%
\langle v_{y},dk\left( r\right) \right\rangle +\displaystyle\int_{s}^{t}%
\left[ \varphi \left( z\left( r\right) \right) -\varphi \left( x\left(
r\right) \right) \right] dr \\
& \leq -\left\langle v_{y},k\left( t\right) -k\left( s\right) \right\rangle
+3L\left( t-s\right) .
\end{align*}%
Taking the $\sup_{\left\Vert \alpha \right\Vert _{\left[ s,t\right] }\leq 1}$,
we infer%
$$
\rho _{0}\left( \left\updownarrow k\right\updownarrow _{t}-\left\updownarrow
k\right\updownarrow _{s}\right) \leq \left\vert k\left( t\right) -k\left(
s\right) \right\vert +3L\left( t-s\right) ,
$$
that is (\ref{osp-3}).

We have also%
\begin{align*}
& \left\updownarrow k\right\updownarrow _{t}-\left\updownarrow
k\right\updownarrow _{s} \\
& \leq \frac{1}{\rho _{0}}\left\vert k\left( t\right) -k\left( s\right)
\right\vert +\frac{3L}{\rho_{0}}\left(t-s\right) \\
& =\frac{1}{\rho _{0}}\displaystyle\int_{s}^{t}\left[ H^{-1}\left( x\left(
r\right) \right) -H^{-1}\left( x\left( s\right) \right) \right] H\left(
x\left( r\right) \right) dk\left( r\right) +\frac{1}{\rho _{0}}H^{-1}\left(
x\left( s\right) \right) \displaystyle\int_{s}^{t}H\left( x\left( r\right)
\right) dk\left( r\right) \\
& +\frac{3L}{\rho _{0}}\left( t-s\right) \\
& \leq \frac{bc}{\rho _{0}}\displaystyle\int_{s}^{t}\left\vert x\left(
r\right) -x\left( s\right) \right\vert d\left\updownarrow
k\right\updownarrow _{r}+\frac{c}{\rho _{0}}\left\vert -x\left( t\right)
+x\left( s\right) +m\left( t\right) -m\left( s\right) \right\vert +\frac{3L}{%
\rho _{0}}\left( t-s\right) \\
& \leq \frac{bc}{\rho _{0}}2\delta _{0}\left( \left\updownarrow
k\right\updownarrow _{t}-\left\updownarrow k\right\updownarrow _{s}\right) +%
\frac{c}{\rho _{0}}\left\vert x\left( t\right) -x\left( s\right) \right\vert
+\frac{c}{\rho _{0}}\mathbf{m}_{m}\left( t-s\right) +\frac{3L}{\rho _{0}}%
\left( t-s\right) \\
& \leq \frac{1}{2}\left( \left\updownarrow k\right\updownarrow
_{t}-\left\updownarrow k\right\updownarrow _{s}\right) +\frac{c}{\rho _{0}}%
\left\vert x\left( t\right) -x\left( s\right) \right\vert +\frac{c}{\rho _{0}%
}\mathbf{m}_{m}\left( t-s\right) +\frac{3L}{\rho _{0}}\left( t-s\right)
\end{align*}%
and, consequently,%
\begin{equation}
\begin{array}{lll}
\left\updownarrow k\right\updownarrow _{t}-\left\updownarrow
k\right\updownarrow _{s} & \leq & \dfrac{2c}{\rho _{0}}\left\vert x\left(
t\right) -x\left( s\right) \right\vert +\dfrac{2c}{\rho _{0}}\mathbf{m}%
_{m}\left( t-s\right) +\dfrac{6L}{\rho _{0}}\left( t-s\right) \medskip \\
& \leq & \dfrac{1}{b}+\dfrac{2c}{\rho _{0}}\mathbf{m}_{m}\left( t-s\right) +%
\dfrac{6L}{\rho _{0}}T\medskip \\
& \leq & C_{1}\left( 1+\left\Vert m\right\Vert _{T}\right) ,%
\end{array}
\label{o4}
\end{equation}%
with $C_{1}=C_{1}\left( T,b,c,\rho _{0},L\right) .$

Now, plugging this estimate in (\ref{ob1}), it clearly follows
$$
\mathbf{m}_{x}\left( t-s\right) \leq \left[ \left( t-s\right) +\mathbf{m}%
_{m}\left( t-s\right) +~\sqrt{\mathbf{m}_{m}\left( t-s\right) }\right] \exp %
\left[ C^{\prime }(1+\left\Vert m\right\Vert _{T}^{2})\right] ,
$$
with $C^{\prime }=C^{\prime }\left( b,c,L,r_{0},h_{0},T\right) .$ Now, this
last inequality, used in (\ref{o4}), yields the estimate (\ref{ob2}).\hfill
\end{proof}

\begin{lemma}
\label{sp-l2}Let the assumptions (\ref{osp-h0}), (\ref{osp-h0-A}), (\ref%
{osp-h2}) and (\ref{osp-h3}) be satisfied. Let $\left( x,k\right) \in%
\mathcal{SP}\left( H\partial\varphi;x_{0},m\right) ,$ $0\leq s\leq t\leq T$
and $x\left( r\right) \in D_{\delta_{0}}$ for all $r\in\left[ s,t\right] $.
Then%
$$
\left\updownarrow k\right\updownarrow _{t}-\left\updownarrow
k\right\updownarrow _{s}\leq L\left( 1+\frac{2}{\delta_{0}}\right)\left(
t-s\right)
$$
and%
$$
\mathbf{m}_{x}\left( t-s\right) \leq C_{T}\times\left[ \left( t-s\right) +%
\mathbf{m}_{m}\left( t-s\right) \right],
$$
where $C_{T}=C_{T}\left( b,c,r_{0},h_{0},L,T\right) >0.$
\end{lemma}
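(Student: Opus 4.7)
The plan is to exploit the fact that $x(r)$ lives in the $\delta_{0}$-interior $D_{\delta_{0}}$ of $D$: there is room to perturb $x(r)$ by an arbitrary continuous displacement of norm $\delta_{0}$ and stay inside $D$, which lets the subdifferential inequality be applied directly, without the ``pushing direction'' $v_{y}$ used in Lemma~\ref{l1-oSp}. This immediately produces linear-in-$(t-s)$ bounds rather than the $\sqrt{t-s+\mathbf{m}_{m}(t-s)}$ bounds of the general case.

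\textbf{Step 1 (bound on the variation of $k$).} Using the duality $(C([s,t];\mathbb{R}^{d}))^{\ast}=\{k\in BV:\ k(0)=0\}$ recalled just before Definition~\ref{def1}, it suffices to estimate $\int_{s}^{t}\langle\alpha(r),dk(r)\rangle$ for an arbitrary continuous $\alpha:[s,t]\rightarrow\mathbb{R}^{d}$ with $\|\alpha\|_{[s,t]}\leq 1$. Set
\[
y(r)=x(r)+\delta_{0}\,\alpha(r),\qquad r\in[s,t].
\]
Since $x(r)\in D_{\delta_{0}}$, the closed ball $\overline{B}(x(r),\delta_{0})$ is contained in $D$, hence $y(r)\in D$ for every $r\in[s,t]$. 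Plugging $y$ into the subdifferential inequality from Definition~\ref{def1}(d),
\[
\delta_{0}\int_{s}^{t}\langle\alpha(r),dk(r)\rangle\leq\int_{s}^{t}\bigl[\varphi(y(r))-\varphi(x(r))\bigr]dr,
\]
and applying the Lipschitz-type estimate (\ref{osp-h3})(iii) together with $|y(r)-x(r)|\leq\delta_{0}$,
\[
\int_{s}^{t}\bigl[\varphi(y(r))-\varphi(x(r))\bigr]dr\leq\bigl(L+L\delta_{0}\bigr)(t-s).
\]
Dividing by $\delta_{0}$ and taking the supremum over admissible $\alpha$ yields $\updownarrow k\updownarrow_{t}-\updownarrow k\updownarrow_{s}\leq L(1+\tfrac{2}{\delta_{0}})(t-s)$, after absorbing the $1/\delta_{0}+1$ factor into the slightly larger $1+2/\delta_{0}$ stated in the lemma.

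\textbf{Step 2 (modulus of continuity of $x$).} Writing the Skorohod equation between $s$ and $t$,
\[
x(t)-x(s)=-\int_{s}^{t}H(x(r))\,dk(r)+\bigl[m(t)-m(s)\bigr],
\]
and using the uniform bound $|H(x)|\leq c$ from (\ref{osp-h0-A}) together with Step~1,
\[
|x(t)-x(s)|\leq c\bigl(\updownarrow k\updownarrow_{t}-\updownarrow k\updownarrow_{s}\bigr)+\mathbf{m}_{m}(t-s)\leq cL\bigl(1+\tfrac{2}{\delta_{0}}\bigr)(t-s)+\mathbf{m}_{m}(t-s),
\]
which is exactly the desired estimate $\mathbf{m}_{x}(t-s)\leq C_{T}\,[(t-s)+\mathbf{m}_{m}(t-s)]$ with $C_{T}=C_{T}(b,c,r_{0},h_{0},L,T)$.

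The only subtlety is verifying that $y(r)=x(r)+\delta_{0}\alpha(r)$ is an admissible test function in Definition~\ref{def1}(d), and that the inequality may be applied with $\alpha$ replaced by $-\alpha$ as well so that the total variation (and not merely the signed integral) is controlled; both follow because $\alpha\in C([s,t];\mathbb{R}^{d})$ with $\|\alpha\|_{[s,t]}\leq 1$ can be extended continuously to $[0,T]$ without increasing its sup-norm, and the inequality is linear in $\alpha$. No Gronwall argument or push-direction construction is needed here, which is what produces the linear (as opposed to H\"older $1/2$) dependence on $t-s$.
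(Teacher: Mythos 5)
Your proof is correct, and Step~1 is essentially the paper's own argument: the paper tests with $y(r)=x(r)+\frac{\delta_{0}}{2}\alpha(r)$ rather than $x(r)+\delta_{0}\alpha(r)$, which is why its constant comes out as exactly $\frac{2L}{\delta_{0}}+L$; your choice gives the slightly sharper $\frac{L}{\delta_{0}}+L$, still within the stated bound. Where you genuinely diverge is in the second estimate. The paper simply feeds the variation bound into Lemma~\ref{l1-mc-x}, whose Gronwall-type inequality (\ref{ob1}) yields $\mathbf{m}_{x}(t-s)\leq C_{T}[(t-s)+\mathbf{m}_{m}(t-s)]$ after absorbing the cross term $\sqrt{\mathbf{m}_{m}(t-s)\left(\updownarrow k\updownarrow_{t}-\updownarrow k\updownarrow_{s}\right)}$ by Young's inequality (the exponential factor stays bounded because $\updownarrow k\updownarrow_{t}-\updownarrow k\updownarrow_{s}\leq L(1+2/\delta_{0})T$). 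You instead read the bound directly off the integral equation; this is more elementary, avoids Lemma~\ref{l1-mc-x} entirely, and works precisely because on $D_{\delta_{0}}$ the variation of $k$ is already linear in $t-s$, so no bootstrapping is needed. Two minor points: the bound $|H(x)v|\leq c|v|$ you use is the operator-norm consequence of the symmetry and the two-sided quadratic-form condition in (\ref{osp-h0-A}) (the paper's $|H(x)|$ denotes the Frobenius norm, but the paper itself uses the same spectral bound when writing $\updownarrow K\updownarrow_{t}\leq c\updownarrow k\updownarrow_{t}$); and since the hypothesis only controls $x$ on $[s,t]$, the conclusion should be read as a bound on the modulus of continuity of $x$ restricted to $[s,t]$, obtained by running your Step~2 on every subinterval $[u,v]\subset[s,t]$ --- which is exactly how the lemma is used later in the proof of Lemma~\ref{oSP-l4-compact}.
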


\begin{proof}
Let $y\left( r\right) =x\left( r\right) +\dfrac{\delta _{0}}{2}\alpha \left(
r\right) $, with $\alpha \in C\left( \mathbb{R}_{+};\mathbb{R}^{d}\right) ,$ $%
\left\Vert \alpha \right\Vert _{\left[ s,t\right] }\leq 1.$ Then $y\left(
r\right) \in D$ and
\begin{align*}
\dfrac{\delta _{0}}{2}\displaystyle\int_{s}^{t}\left\langle \alpha \left(
r\right) ,dk\left( r\right) \right\rangle & =\displaystyle%
\int_{s}^{t}\left\langle y\left( r\right) -x\left( r\right) ,dk\left(
r\right) \right\rangle  \\
& \leq \displaystyle\int_{s}^{t}\left[ \varphi \left( y\left( r\right)
\right) -\varphi \left( x\left( r\right) \right) \right] dr \\
& \leq L\left( t-s\right) +L\displaystyle\int_{s}^{t}\left\vert y\left(
r\right) -x\left( r\right) \right\vert dr \\
& \leq L\left( t-s\right) +L\dfrac{\delta _{0}}{2}\left( t-s\right) .
\end{align*}%
Taking the supremum over all $\alpha $ such that $\left\Vert \alpha
\right\Vert _{\left[ s,t\right] }\leq 1$, we have
$$
\left\updownarrow k\right\updownarrow _{t}-\left\updownarrow
k\right\updownarrow _{s} \leq \left( \frac{2L}{\delta _{0}}+L\right)
\left( t-s\right)
$$
and, by Lemma \ref{l1-mc-x}, the result follows.\hfill
\end{proof}

Denote now $\mathbf{\mu}_{m}\left( \varepsilon\right) =\varepsilon +\mathbf{m}%
_{m}\left( \varepsilon\right) ,$ $\varepsilon\geq0.$

\begin{lemma}
\label{oSP-l4-compact}Let the assumptions (\ref{osp-h0}), (\ref{osp-h0-A}), (%
\ref{osp-h2}) and (\ref{osp-h3}) be satisfied and $\left( x,k\right) \in
\mathcal{SP}\left( H\partial \varphi ;x_{0},m\right) .$ Then, there exists a
positive constant $C_{T}\left( \left\Vert m\right\Vert _{T}\right) =C\left(
x_{0},b,c,r_{0},h_{0},L,T,\left\Vert m\right\Vert _{T}\right) ,$ increasing
function with respect to $\left\Vert m\right\Vert _{T},$ such that, for all $%
0\leq s\leq t\leq T:$%
\begin{equation}
\begin{array}{ll}
\left( a\right) \quad & \left\Vert x\right\Vert _{T}+\left\updownarrow
k\right\updownarrow _{T}\leq C_{T}\left( \left\Vert m\right\Vert _{T}\right)
,\medskip \\
\left( b\right) \quad & \left\vert x\left( t\right) -x\left( s\right)
\right\vert +\left\updownarrow k\right\updownarrow _{t}-\left\updownarrow
k\right\updownarrow _{s}\leq C_{T}\left( \left\Vert m\right\Vert _{T}\right)
\times \sqrt{\mathbf{\mu }_{m}\left( t-s\right) }.%
\end{array}
\label{ob4}
\end{equation}
\end{lemma}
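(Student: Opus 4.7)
The plan is to patch the local estimate of Lemma \ref{l1-oSp} into a global one via a partition of $[0,T]$ whose mesh is small enough that the oscillation hypothesis $\sup_{r\in[s,t]}|x(r)-x(s)|\leq 2\delta_0$ is automatically satisfied on each piece. All the work is to produce such a partition; once it exists, (\ref{ob2}) provides the per-interval control and (\ref{ob4}) follows by summing.

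\textbf{Step 1 (choice of mesh).} Write $C_T^*$ for the constant appearing in the exponent of (\ref{ob2}). Since $m\in C([0,T];\mathbb{R}^d)$ is uniformly continuous, $\mathbf{m}_m(\eta)\to 0$ as $\eta\downarrow 0$, so I pick $\eta=\eta(T,b,c,r_0,h_0,L,\|m\|_T)>0$ with
$$
\sqrt{\mathbf{\mu}_m(\eta)}\;\exp\!\big[C_T^*(1+\|m\|_T^2)\big]\ \leq\ \tfrac{1}{2}\delta_0,
$$
and this $\eta$ is non-increasing in $\|m\|_T$.

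\textbf{Step 2 (bootstrap).} For $s\in[0,T-\eta]$, set
$$
\tau=\sup\Big\{r\in[s,s+\eta]:\sup_{\sigma\in[s,r]}|x(\sigma)-x(s)|\leq 2\delta_0\Big\}.
$$
Continuity of $x$ gives $\tau>s$; if $\tau<s+\eta$, continuity also forces $\sup_{\sigma\in[s,\tau]}|x(\sigma)-x(s)|=2\delta_0$. But Lemma \ref{l1-oSp} applied on every $[s,r]$, $r\leq\tau$, yields $|x(r)-x(s)|\leq\sqrt{\mathbf{\mu}_m(r-s)}\exp[C_T^*(1+\|m\|_T^2)]\leq\delta_0/2$, a contradiction. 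Hence $\tau=s+\eta$, so the hypothesis of Lemma \ref{l1-oSp} holds on $[s,s+\eta]$ and (\ref{ob2}) is valid there.

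\textbf{Step 3 (summing).} Partition $0=t_0<t_1<\cdots<t_N=T$ with $t_{i+1}-t_i\leq\eta$ and $N\leq T/\eta+1$. Step 2 applied on each $[t_i,t_{i+1}]$ gives $|x(t_{i+1})-x(t_i)|+\updownarrow k\updownarrow_{t_{i+1}}-\updownarrow k\updownarrow_{t_i}\leq\delta_0/2$. Summing,
$$
\|x\|_T\leq |x_0|+N\tfrac{\delta_0}{2},\qquad \updownarrow k\updownarrow_T\leq N\tfrac{\delta_0}{2},
$$
which is (\ref{ob4})(a), with a constant increasing in $\|m\|_T$ through $N\sim T/\eta(\|m\|_T)$. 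For (\ref{ob4})(b), distinguish two cases. If $t-s>\eta$, then $\sqrt{\mathbf{\mu}_m(t-s)}\geq\sqrt{\eta}$ and (b) follows from (a) with constant $2C_T(\|m\|_T)/\sqrt{\eta}$. If $t-s\leq\eta$, then $[s,t]$ lies in at most two consecutive subintervals; using the oscillation bound $\delta_0/2$ per subinterval from Step 3 this forces $\sup_{r\in[s,t]}|x(r)-x(s)|\leq\delta_0\leq 2\delta_0$, so Lemma \ref{l1-oSp} applies directly to $[s,t]$ and its conclusion (\ref{ob2}) is exactly (b).

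\textbf{Main obstacle.} The delicate point is the apparently circular choice of $\eta$ in Step 1: the constant in (\ref{ob2}) itself depends on $\|m\|_T$, and yet $\eta$ must be chosen so that $\sqrt{\mathbf{\mu}_m(\eta)}\,e^{C_T^*(1+\|m\|_T^2)}$ stays below the admissibility threshold $2\delta_0$ of Lemma \ref{l1-oSp}. The circularity is only apparent because $C_T^*$ depends only on $\|m\|_T$ and on the structural constants $b,c,r_0,h_0,L,T$ (not on the finer shape of $x$ or $k$), while $\mathbf{m}_m(\eta)\to 0$ independently as $\eta\to 0$; once this dependency is resolved, the rest of the argument is routine partitioning.
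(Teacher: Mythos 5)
Your proof is correct, but it takes a genuinely different route from the paper's. The paper follows Lions--Sznitman: it introduces the alternating stopping times $T_i$ (first hitting of the $\delta_0/2$-neighbourhood of $Bd(D)$) and $t_i$ (first time the oscillation after $T_i$ exceeds $\delta_0$), controls the ``interior'' pieces $[t_i,T_{i+1}]$ by Lemma \ref{sp-l2} (where $x\in D_{\delta_0}$ forces $d\!\left\updownarrow k\right\updownarrow\leq L(1+2/\delta_0)\,dt$) and the ``near-boundary'' pieces $[T_i,t_i]$ by Lemma \ref{l1-oSp}, and then bounds the number of pieces by $j\leq T\Delta_m$ via the lower bound $T_{i+1}-T_i\geq \mathbf{\mu}_m^{-1}[(\delta_0/C_T)^2]$. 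You instead take a deterministic mesh of size $\eta$ chosen a priori so small that the conclusion of (\ref{ob2}) is $\leq\delta_0/2$, and you verify its hypothesis on each mesh cell by a continuity bootstrap (the estimate of Lemma \ref{l1-oSp} self-improves the oscillation from $2\delta_0$ to $\delta_0/2$, so the admissible set is open and closed in $[s,s+\eta]$). This is sound and arguably cleaner: it dispenses with the boundary-distance stopping times and with Lemma \ref{sp-l2} altogether, using only the single local estimate (\ref{ob2}); the price is that your partition is not adapted to the path, whereas the paper's stopping-time count $j\leq T\Delta_m$ is, and the paper's scheme generalizes better to situations (e.g. $\varphi=I_E$) where the interior estimate is qualitatively stronger than the oscillation-based one. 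One caveat you should state more carefully: your $\eta$, like the paper's $\Delta_m$, depends on $m$ not only through $\left\Vert m\right\Vert_T$ but also through its modulus of continuity $\mathbf{m}_m$ (so ``$\eta$ is non-increasing in $\left\Vert m\right\Vert_T$'' is imprecise); this is exactly the same imprecision as in the lemma's own statement and is harmless for the downstream compactness arguments, where one works with families admitting a common modulus of continuity, but it should be acknowledged. Two further trivia: in Step 3 your bound for $\left\Vert x\right\Vert_T$ should include the intra-cell oscillation (i.e. $\left\Vert x\right\Vert_T\leq|x_0|+(N+1)\delta_0/2$), and in the case $t-s\leq\eta$ the two-cell oscillation is $3\delta_0/2$ rather than $\delta_0$ --- still below the threshold $2\delta_0$, so the argument stands.
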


\begin{proof}
We will follow the ideas of Lions and Sznitman from \cite{Lions/Sznitman-84}.

{\bf{\it{Step 1.}}} Define the sequence%
\begin{align*}
t_{0}& =T_{0}=0 \\
T_{1}& =\inf \left\{ t\in \left[ t_{0},T\right] :dist\left( x\left( t\right)
,Bd(D)\right) \leq \frac{\delta _{0}}{2}\right\} , \\
t_{1}& =\inf \left\{ t\in \left[ T_{1},T\right] :\left\vert x\left( t\right)
-x\left( T_{1}\right) \right\vert >\delta _{0}\right\} , \\
T_{2}& =\inf \left\{ t\in \left[ t_{1},T\right] :dist\left( x\left( t\right)
,Bd(D)\right) \leq \frac{\delta _{0}}{2}\right\} , \\
& \cdots \;\cdots \;\cdots \\
t_{i}& =\inf \left\{ t\in \left[ T_{i},T\right] :\left\vert x\left( t\right)
-x\left( T_{i}\right) \right\vert >\delta _{0}\right\} , \\
T_{i+1}& =\inf \left\{ t\in \left[ t_{i},T\right] :dist\left( x\left(
t\right) ,Bd(D)\right) \leq \frac{\delta _{0}}{2}\right\} , \\
& \cdots \;\cdots \;\cdots
\end{align*}%
Clearly, we have%
$$
0=T_{0}=t_{0}\leq T_{1}<t_{1}\leq T_{2}<\cdots \leq T_{i}<t_{i}\leq
T_{i+1}<t_{i+1}\leq \cdots \leq T.
$$
Denote%
$$
K\left( t\right) ={\int_{0}^{t}}H\left( x(r)\right) dk\left( r\right)
$$
and it follows that there exists a positive constant $\tilde{c}$ such that
$$
\left\updownarrow K\right\updownarrow _{t}\leq c\left\updownarrow
k\right\updownarrow _{t}\leq \tilde{c}\left\updownarrow K\right\updownarrow
_{t}.
$$

We have

\begin{itemize}
\item for $t_{i}\leq s\leq t\leq T_{i+1}:$%
$$
\left\vert x\left( t\right) -x\left( s\right) \right\vert \leq
\left\updownarrow K\right\updownarrow _{t}-\left\updownarrow
K\right\updownarrow _{s}+\left\vert m\left( t\right) -m\left( s\right)
\right\vert .
$$
Since for $t_{i}\leq r\leq T_{i+1}$, $x\left( r\right) \in D_{\delta _{0}}$
then, by Lemma \ref{sp-l2}, for $t_{i}\leq s\leq t\leq T_{i+1}\,,$%
$$
\left\updownarrow k\right\updownarrow _{t}-\left\updownarrow
k\right\updownarrow _{s}\leq L\left( 1+\frac{2}{\delta _{0}}%
\right) \left( t-s\right)
$$
and%
$$
\mathbf{m}_{x}\left( t-s\right) \leq \left[ \left( t-s\right) +\mathbf{m}%
_{m}\left( t-s\right) \right] \times C_{T}.
$$
Hence, denoting in what follows by $C_{T}(\left\Vert m\right\Vert _{T})$ a generic constant depending on the supremum norm of the continuous function $m$, we have
\begin{align*}
\mathbf{m}_{x}\left( t-s\right) +\left\updownarrow k\right\updownarrow
_{t}-\left\updownarrow k\right\updownarrow _{s}& \leq \mathbf{\mu }%
_{m}\left( t-s\right) \times C_{T} \\
& \leq \sqrt{\mathbf{\mu }_{m}\left( t-s\right) }\times C_{T}\left(
\left\Vert m\right\Vert _{T}\right).
\end{align*}

\item for $T_{i}\leq s\leq t\leq t_{i},$ by Lemma \ref{l1-oSp} we have%
$$
\left\vert x\left( t\right) -x\left( s\right) \right\vert +\left\updownarrow
k\right\updownarrow _{t}-\left\updownarrow k\right\updownarrow _{s}\leq\sqrt{%
\mathbf{\mu}_{m}\left( t-s\right) }\times C_{T}\left( \left\Vert
m\right\Vert _{T}\right).
$$

\item for $T_{i}\leq s\leq t_{i}\leq t\leq T_{i+1}~$,%
\begin{align*}
& \left\vert x\left( t\right) -x\left( s\right) \right\vert
+\left\updownarrow k\right\updownarrow _{t}-\left\updownarrow
k\right\updownarrow _{s} \\
& \leq \left\vert x\left( t\right) -x\left( t_{i}\right) \right\vert
+\left\updownarrow k\right\updownarrow _{t}-\left\updownarrow
k\right\updownarrow _{t_{i}}+\left\vert x\left( t_{i}\right) -x\left(
s\right) \right\vert +\left\updownarrow k\right\updownarrow
_{t_{i}}-\left\updownarrow k\right\updownarrow _{s} \\
& \leq \sqrt{\mathbf{\mu }_{m}\left( t-t_{i}\right) }\times C_{T}+\sqrt{%
\mathbf{\mu }_{m}\left( t_{i}-s\right) }\times C_{T}\left( \left\Vert
m\right\Vert _{T}\right) \\
& \leq \sqrt{\mathbf{\mu }_{m}\left( t-s\right) }\times C_{T}\left( \left\Vert m\right\Vert _{T}\right).
\end{align*}
\end{itemize}

Consequently, for all $i\in \mathbb{N}$ and $T_{i}\leq s\leq t\leq T_{i+1}~,$%
$$
\left\vert x\left( t\right) -x\left( s\right) \right\vert +\left\updownarrow
k\right\updownarrow _{t}-\left\updownarrow k\right\updownarrow _{s}\leq
\sqrt{\mathbf{\mu }_{m}\left( t-s\right) }\times C_{T}\left( \left\Vert
m\right\Vert _{T}\right),
$$
where $C_{T}\left( \left\Vert m\right\Vert _{T}\right) =C\left(
b,c,r_{0},h_{0},L,\left\Vert m\right\Vert _{T}\right) $ is increasing with respect to $%
\left\Vert m\right\Vert _{T}.\smallskip $

{\bf{\it{Step 2.}}} Since $\mathbf{\mu }_{m}:\left[ 0,T%
\right] \rightarrow \left[ 0,\mathbf{\mu }_{m}\left( T\right) \right] $ is a
strictly increasing continuous function, then the inverse function $\mathbf{%
\mu }_{m}^{-1}:\left[ 0,\mathbf{\mu }_{m}\left( T\right) \right] \rightarrow %
\left[ 0,T\right] $ is well defined and it is, also, a strictly increasing
continuous function. We have%
\begin{align*}
\delta & \leq \left\vert x\left( t_{i}\right) -x\left( T_{i}\right)
\right\vert \\
& \leq \sqrt{\mathbf{\mu }_{m}\left( t_{i}-T_{i}\right) }\times C_{T}\left(
\left\Vert m\right\Vert _{T}\right) \\
& \leq \sqrt{\mathbf{\mu }_{m}\left( T_{i+1}-T_{i}\right) }\times
C_{T}\left( \left\Vert m\right\Vert _{T}\right)
\end{align*}%
and, consequently,%
$$
T_{i+1}-T_{i}\geq \mathbf{\mu }_{m}^{-1}\left[ \left( \frac{\delta }{%
C_{T}\left( \left\Vert m\right\Vert _{T}\right) }\right) ^{2}\right] \overset%
{def}{=}\frac{1}{\Delta _{m}}>0.
$$
Therefore, the bounded increasing sequence $\left( T_{i}\right) _{i\geq 0}$ is
finite.\newline
Considering $j$ be such that $T=T_{j},$ we have%
$$
T=T_{j}=\sum_{i=1}^{j}\left( T_{i}-T_{i-1}\right) \geq \frac{j}{\Delta _{m}}%
~.
$$
Let $0\leq s\leq t\leq T$ and we have
\begin{align*}
\left\updownarrow k\right\updownarrow _{t}-\left\updownarrow
k\right\updownarrow _{s}& =\sum_{i=1}^{j}\left( \left\updownarrow
k\right\updownarrow _{\left( t\wedge T_{i}\right) \vee s}-\left\updownarrow
k\right\updownarrow _{\left( t\wedge T_{i-1}\right) \vee s}\right) \\
& \leq \sum_{i=1}^{j}\sqrt{\mathbf{\mu }_{m}\Big(\left( t\wedge T_{i}\right)
\vee s-\left( t\wedge T_{i-1}\right) \vee s\Big)}\times C_{T}\left(
\left\Vert m\right\Vert _{T}\right) \\
& \leq j\times \sqrt{\mathbf{\mu }_{m}\left( t-s\right) }\times C_{T}\left(
\left\Vert m\right\Vert _{T}\right) \\
& \leq T~\Delta _{m}~\sqrt{\mathbf{\mu }_{m}\left( t-s\right) }\times
C_{T}\left( \left\Vert m\right\Vert _{T}\right).
\end{align*}%
Consequently,%
$$
\left\updownarrow k\right\updownarrow _{T}\leq T~\Delta _{m}~\sqrt{\mathbf{%
\mu }_{m}\left( T\right) }\times C_{T}\left( \left\Vert m\right\Vert
_{T}\right) \leq C_{T}^{\prime }\left( \left\Vert m\right\Vert _{T}\right)
$$
and
\begin{align*}
\left\vert x\left( t\right) \right\vert & =\left\vert x_{0}+m\left( t\right)
-\displaystyle\int_{0}^{t}H\left( x\left( s\right) \right) dk\left( s\right)
\right\vert \\
& \leq \left\vert x_{0}\right\vert +\left\Vert m\right\Vert
_{t}+c\left\updownarrow k\right\updownarrow _{t} \\
& \leq \left\vert x_{0}\right\vert +\left\Vert m\right\Vert
_{T}+c\left\updownarrow k\right\updownarrow _{T}.
\end{align*}%
We conclude that there exists a positive constant $C_{T}\left( \left\Vert m\right\Vert
_{T}\right) =C\left( b,c,r_{0},h_{0},L,\left\Vert m\right\Vert _{T}\right)
>0 $ (increasing with respect to $\left\Vert m\right\Vert _{T}$) such that%
$$
\left\updownarrow k\right\updownarrow _{T}\leq C_{T}\left( \left\Vert
m\right\Vert _{T}\right) \;\;\textrm{and \ \ }\left\Vert x\right\Vert _{T}\leq
\left\vert x_{0}\right\vert +C_{T}\left( \left\Vert m\right\Vert _{T}\right),
$$
that is (\ref{ob4}-a).$\smallskip $

By Lemma \ref{l1-mc-x}, for every $\,0\leq s\leq t\leq T:$%
\begin{align*}
\mathbf{m}_{x}\left( t-s\right) & \leq \left[ \left( t-s\right) +\mathbf{m}%
_{m}\left( t-s\right) +~\sqrt{\mathbf{m}_{m}\left( t-s\right) C_{T}\left(
\left\Vert m\right\Vert _{T}\right) }\right] \times C_{T}\left( \left\Vert
m\right\Vert _{T}\right)  \\
& \leq C_{T}^{\prime }\left( \left\Vert m\right\Vert _{T}\right) \times
\sqrt{\mathbf{\mu }_{m}\left( t-s\right) },
\end{align*}%
that means (\ref{ob4}-b) holds. The proof is now complete.\hfill
\end{proof}

\subsection{Moreau-Yosida regularization of a convex function}

By $\nabla \varphi _{\varepsilon }$ we denote the gradient of the Yosida's
regularization $\varphi _{\varepsilon }$ of the convex lower semicontinuous function $\varphi $, that is
\begin{align*}
\varphi _{\varepsilon }(x)& =\inf \,\{\frac{1}{2\varepsilon }%
|z-x|^{2}+\varphi (z):z\in \mathbb{R}^{d}\} \\
& =\dfrac{1}{2\varepsilon }|x-J_{\varepsilon }x|^{2}+\varphi (J_{\varepsilon
}x),
\end{align*}%
where $J_{\varepsilon }x=x-\varepsilon \nabla \varphi _{\varepsilon }(x).$
The function $\varphi _{\varepsilon }:\mathbb{R}^{d}\rightarrow \mathbb{R}$
is convex and differentiable and, for all $x,y\in \mathbb{R}^{d},$ $%
\varepsilon >0:$%
\begin{equation}
\begin{array}{ll}
(a)\quad & \nabla \varphi _{\varepsilon }(x)=\partial \varphi _{\varepsilon
}\left( x\right) \in \partial \varphi (J_{\varepsilon }x)\textrm{ and }\varphi
(J_{\varepsilon }x)\leq \varphi _{\varepsilon }(x)\leq \varphi
(x),\smallskip \smallskip \\
(b)\quad & \left\vert \nabla \varphi _{\varepsilon }(x)-\nabla \varphi
_{\varepsilon }(y)\right\vert \leq \dfrac{1}{\varepsilon }\left\vert
x-y\right\vert,\smallskip \smallskip \\
(c)\quad & \left\langle \nabla \varphi _{\varepsilon }(x)-\nabla \varphi
_{\varepsilon }(y),x-y\right\rangle \geq 0,\smallskip \smallskip \\
(d)\quad & \left\langle \nabla \varphi _{\varepsilon }(x)-\nabla \varphi
_{\delta }(y),x-y\right\rangle \geq -(\varepsilon +\delta )\left\langle
\nabla \varphi _{\varepsilon }(x),\nabla \varphi _{\delta }(y)\right\rangle .%
\end{array}
\label{sub6a}
\end{equation}%
Moreover, in the case $0=\varphi \left( 0\right) \leq \varphi \left( x\right) $, for
all $x\in \mathbb{R}^{d}$, we have%
\begin{equation}
\begin{array}{l}
\left( a\right) \quad \quad 0=\varphi _{\varepsilon }(0)\leq \varphi
_{\varepsilon }(x)\quad \textrm{and}\quad J_{\varepsilon }\left( 0\right)
=\nabla \varphi _{\varepsilon }\left( 0\right) =0,\smallskip \\
\left( b\right) \quad \quad \dfrac{\varepsilon }{2}|\nabla \varphi
_{\varepsilon }(x)|^{2}\leq \varphi _{\varepsilon }(x)\leq \left\langle
\nabla \varphi _{\varepsilon }(x),x\right\rangle,\quad \forall x\in \mathbb{%
R}^{d}.%
\end{array}
\label{sub6c}
\end{equation}

\begin{proposition}
\label{p12annexB}Let\ $\varphi :\mathbb{R}^{d}\rightarrow ]-\infty ,+\infty
] $ be a proper convex l.s.c. function such that $int\left( Dom\left(
\varphi \right) \right) \neq \emptyset .$ Let $\left( u_{0},\hat{u}%
_{0}\right) \in \partial \varphi,$ $r_{0}\geq 0$ and%
$$
\varphi _{u_{0},r_{0}}^{\#}\overset{def}{=}\sup \left\{ \varphi \left(
u_{0}+r_{0}v\right) :\left\vert v\right\vert \leq 1\right\} .
$$
Then, for all $\,0\leq s\leq t$ and $dk\left( t\right) \in \partial \varphi
\left( x\left( t\right) \right) \left( dt\right) $,%
\begin{equation}
r_{0}\left( \left\updownarrow k\right\updownarrow _{t}-\left\updownarrow
k\right\updownarrow _{s}\right) +\displaystyle\int_{s}^{t}\varphi
(x(r))dr\leq \displaystyle\int_{s}^{t}\left\langle x\left( r\right)
-u_{0},dk\left( r\right) \right\rangle +\left( t-s\right) \varphi
_{u_{0},r_{0}}^{\#}~,  \label{Ba6a}
\end{equation}%
and, moreover,%
\begin{equation}
\begin{array}{l}
r_{0}\left( \left\updownarrow k\right\updownarrow _{t}-\left\updownarrow
k\right\updownarrow _{s}\right) +\displaystyle\int_{s}^{t}\left\vert \varphi
(x(r))-\varphi \left( u_{0}\right) \right\vert dr\leq \displaystyle%
\int_{s}^{t}\left\langle x\left( r\right) -u_{0},dk\left( r\right)
\right\rangle \smallskip \smallskip \\
\quad \quad \quad +\displaystyle\int_{s}^{t}\left( 2\left\vert \hat{u}%
_{0}\right\vert \left\vert x(r)-u_{0}\right\vert +\varphi
_{u_{0},r_{0}}^{\#}-\varphi \left( u_{0}\right) \right) dr.%
\end{array}
\label{Ba6b}
\end{equation}
\end{proposition}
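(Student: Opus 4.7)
The plan is to derive (\ref{Ba6a}) by testing the subdifferential definition against a well-chosen one-parameter family of continuous perturbations of $u_0$, and then to obtain (\ref{Ba6b}) from (\ref{Ba6a}) by adding the standard subgradient inequality at $u_0$.

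First, I would fix an arbitrary continuous function $\alpha:[s,t]\to\mathbb{R}^d$ with $\|\alpha\|_{[s,t]}\le 1$ and apply the definition of $dk(r)\in\partial\varphi(x(r))(dr)$ with the test function $y(r)=u_0+r_0\alpha(r)$. Since $|r_0\alpha(r)|\le r_0$, we have $\varphi(y(r))\le\varphi_{u_0,r_0}^{\#}$ for every $r\in[s,t]$, so the inequality in Definition~2 yields
\[
\int_s^t\langle u_0-x(r),dk(r)\rangle+r_0\int_s^t\langle\alpha(r),dk(r)\rangle+\int_s^t\varphi(x(r))dr\le (t-s)\varphi_{u_0,r_0}^{\#}.
\]
Rearranging and then taking the supremum over all continuous $\alpha$ with $\|\alpha\|_{[s,t]}\le 1$, the duality between $C([s,t];\mathbb{R}^d)$ and $BV([s,t];\mathbb{R}^d)$ identifies
\[
\sup_{\|\alpha\|_{[s,t]}\le 1}\int_s^t\langle\alpha(r),dk(r)\rangle=\updownarrow k\updownarrow_t-\updownarrow k\updownarrow_s,
\]
which gives (\ref{Ba6a}) at once.

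For (\ref{Ba6b}), I would use the defining inequality of $(u_0,\hat{u}_0)\in\partial\varphi$, which reads $\varphi(u_0)-\varphi(x(r))\le\langle\hat{u}_0,u_0-x(r)\rangle\le|\hat{u}_0||x(r)-u_0|$, so in particular $(\varphi(u_0)-\varphi(x(r)))_+\le|\hat{u}_0||x(r)-u_0|$. Using the elementary identity $|a-b|=(a-b)+2(b-a)_+$ with $a=\varphi(x(r))$, $b=\varphi(u_0)$, this gives
\[
|\varphi(x(r))-\varphi(u_0)|\le\varphi(x(r))-\varphi(u_0)+2|\hat{u}_0||x(r)-u_0|.
\]
Integrating over $[s,t]$ and combining with (\ref{Ba6a}) (after subtracting $(t-s)\varphi(u_0)$ from both sides of (\ref{Ba6a})) yields (\ref{Ba6b}).

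The only subtle point is the identification of the supremum over continuous test directions $\alpha$ with the total variation of $k$; this is standard once we know $k\in BV_{loc}$, but it is the key technical observation. Everything else is rearrangement plus the basic subdifferential inequality at $(u_0,\hat{u}_0)$.
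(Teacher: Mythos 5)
Your proof is correct. The paper states this proposition without proof, but your argument --- testing the subdifferential inequality of Definition~2 with $y(r)=u_{0}+r_{0}\alpha(r)$ and identifying $\sup_{\left\Vert \alpha\right\Vert_{\left[s,t\right]}\leq 1}\int_{s}^{t}\left\langle \alpha(r),dk(r)\right\rangle$ with $\left\updownarrow k\right\updownarrow_{t}-\left\updownarrow k\right\updownarrow_{s}$ --- is exactly the device the paper itself employs in Lemmas \ref{l1-oSp} and \ref{sp-l2}, and your passage from (\ref{Ba6a}) to (\ref{Ba6b}) via the subgradient inequality at $\left(u_{0},\hat{u}_{0}\right)$ together with the identity $|a-b|=(a-b)+2(b-a)_{+}$ is sound.
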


\subsection{Useful inequalities}

Let now introduce the spaces that will appear in the next results.\smallskip

Denote by $S_{d}^{p}\left[0,T\right]$, $p\geq 0$, the space of
progressively measurable continuous stochastic processes $X:\Omega \times %
\left[0,T\right]\rightarrow\mathbb{R}^{d}$, such that%
\[
\left\Vert X\right\Vert _{S_{d}^{p}}=\left\{
\begin{array}{ll}
\left(\mathbb{E}\left\Vert X\right\Vert_{T}^{p}\right)^{\frac{1}{p}\wedge
1}<{\infty}, & \;\text{if }p>0,\bigskip  \\
\mathbb{E}\left[1\wedge\left\Vert X\right\Vert_{T}\right], & \;\text{if }
p=0,
\end{array}
\right.
\]
where $\left\Vert X\right\Vert _{T}=\sup_{t\in \left[ 0,T\right]
}\left\vert X_{t}\right\vert$. The space $(S_{d}^{p}\left[ 0,T\right],\left\Vert \cdot \right\Vert_{S_{d}^{p}}),\ p\!\geq 1,$ is a Banach space
and $S_{d}^{p}\left[ 0,T\right]$, $0\leq p<1$, is a complete metric space
with the metric $\rho(Z_{1},Z_{2})=\left\Vert Z_{1}-Z_{2}\right\Vert
_{S_{d}^{p}}$ (when $p=0$ the metric convergence coincides with the
probability convergence).\smallskip

Denote by $\Lambda_{d\times k}^{p}\left( 0,T\right),\ p\in \lbrack 0,{\infty}[$, the space of progressively measurable stochastic processes $Z:{\Omega }\times ]0,T[\rightarrow \mathbb{R}^{d\times k}$ such that
\[
\left\Vert Z\right\Vert _{\Lambda ^{p}}=\left\{
\begin{array}{ll}
\left[ \mathbb{E}\left( \displaystyle\int_{0}^{T}\Vert Z_{s}\Vert ^{2}ds\right) ^{\frac{p}{2}}\right] ^{\frac{1}{p}\wedge 1}, & \;\text{if }p>0,\bigskip  \\
\mathbb{E}\left[ 1\wedge \left(\displaystyle\int_{0}^{T}\Vert Z_{s}\Vert^{2}ds\right)
^{\frac{1}{2}}\right], & \;\text{if }p=0.
\end{array}
\right.
\]%
The space $(\Lambda_{d\times k}^{p}\left(0,T\right),\left\Vert \cdot
\right\Vert_{\Lambda ^{p}}),\ p\geq 1,$ is a Banach space and $\Lambda
_{d\times k}^{p}\left( 0,T\right) $, $0\leq p<1,$ is a complete metric space
with the metric $\rho (Z_{1},Z_{2})=\left\Vert Z_{1}-Z_{2}\right\Vert
_{\Lambda ^{p}}$.

\begin{proposition}
\label{AnC-GGI} Let $x\in BV_{loc}\left( \left[ 0,\infty \right[;\mathbb{R}%
^{d}\right)$ and $V\in BV_{loc}\left( \left[ 0,\infty \right[ ;\mathbb{R}%
\right)$ be continuous functions. Let $R,$ $N:\left[0,\infty \right[
\rightarrow \left[0,\infty \right[$ be two continuous increasing
functions. If%
$$
\left\langle x\left(t\right),dx\left(t\right)\right\rangle\leq dR\left(
t\right) +\left\vert x\left(t\right)\right\vert dN\left(t\right)
+\left\vert x\left(t\right)\right\vert ^{2}dV\left(t\right)
$$
as signed measures on $\left[0,\infty\right[$, then for all $0\leq t\leq
T,$%
\begin{equation}
\left\Vert e^{-V}x\right\Vert_{\left[t,T\right]}\leq 2\left[\left\vert
e^{-V\left( t\right) }x\left(t\right)\right\vert +\left(
\int_{t}^{T}e^{-2V\left( s\right)}dR\left(s\right)\right)
^{1/2}+\int_{t}^{T}e^{-V\left(s\right)}dN\left(s\right)\right].
\label{ineq1-Anex}
\end{equation}
If $R=0$ then, for all $0\leq t\leq s$,
\begin{equation}
|x(s)|\leq e^{V(s)-V(t)}|x(t)|+\int_{t}^{s}e^{V(s)-V(r)}dN(r).
\label{AnC-dxRNV}
\end{equation}
\end{proposition}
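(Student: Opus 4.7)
\medskip

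\noindent\textbf{Proof plan.} The plan is to reduce the inequality to a standard Gronwall-type bound by absorbing the measure $dV$ via the exponential weight $e^{-V}$, and then to solve a quadratic inequality for the $L^\infty$-norm of the weighted process.

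First I set $y(t)=e^{-V(t)}x(t)$. Since $x$ and $V$ are continuous BV functions, the Riemann--Stieltjes integration-by-parts formula applies without any jump corrections, giving
\begin{equation*}
d\bigl(e^{-2V(t)}|x(t)|^{2}\bigr)=2e^{-2V(t)}\langle x(t),dx(t)\rangle-2e^{-2V(t)}|x(t)|^{2}dV(t)
\end{equation*}
as signed measures on $[0,\infty[$. Feeding in the hypothesis, the term $2e^{-2V}|x|^{2}dV$ cancels exactly and one is left with
\begin{equation*}
d|y(t)|^{2}\le 2e^{-2V(t)}dR(t)+2e^{-V(t)}|y(t)|\,dN(t).
\end{equation*}

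Next I integrate this between $t$ and an arbitrary $s\in[t,T]$ and introduce $\eta(T)=\sup_{r\in[t,T]}|y(r)|^{2}$. Bounding $|y(r)|\le\sqrt{\eta(T)}$ inside the $dN$ integral and then taking the supremum over $s\in[t,T]$ yields
\begin{equation*}
\eta(T)\le |y(t)|^{2}+2\int_{t}^{T}e^{-2V(s)}dR(s)+2\sqrt{\eta(T)}\int_{t}^{T}e^{-V(s)}dN(s).
\end{equation*}
Setting $a=\sqrt{\eta(T)}$, $b=\int_{t}^{T}e^{-V(s)}dN(s)$, $c^{2}=|y(t)|^{2}+2\int_{t}^{T}e^{-2V(s)}dR(s)$, this is $a^{2}-2ab-c^{2}\le 0$, hence $a\le b+\sqrt{b^{2}+c^{2}}\le 2b+c\le 2b+|y(t)|+\sqrt{2}\bigl(\int_{t}^{T}e^{-2V(s)}dR(s)\bigr)^{1/2}$, which gives (\ref{ineq1-Anex}) after bounding $\sqrt{2}\le 2$.

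For the second assertion, where $R\equiv 0$, the inequality becomes $d|y|^{2}\le 2|y|e^{-V}dN$, and one needs a pointwise (not supremum) bound. The clean way is to regularize: for $\varepsilon>0$, apply the chain rule to $\phi_{\varepsilon}(y)=\sqrt{|y|^{2}+\varepsilon}$ (valid because $\phi_{\varepsilon}$ is $C^{1}$ with $|\nabla\phi_{\varepsilon}|\le 1$, and $y$ is continuous BV) to obtain
\begin{equation*}
d\sqrt{|y(t)|^{2}+\varepsilon}\le \frac{|y(t)|}{\sqrt{|y(t)|^{2}+\varepsilon}}\,e^{-V(t)}dN(t)\le e^{-V(t)}dN(t).
\end{equation*}
Integrating from $t$ to $s$ and letting $\varepsilon\downarrow 0$ gives $|y(s)|\le|y(t)|+\int_{t}^{s}e^{-V(r)}dN(r)$, which after multiplication by $e^{V(s)}$ is exactly (\ref{AnC-dxRNV}).

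The main obstacle is the careful Stieltjes calculus: justifying the product rule for $e^{-2V}|x|^{2}$ in the signed-measure sense for continuous BV paths, and — in the $R=0$ case — the regularization step that converts the inequality for $d|y|^{2}$ into an inequality for $d|y|$ without assuming that $y$ avoids $0$. The rest is a quadratic-inequality manipulation and the elementary estimate $\sqrt{A+B}\le\sqrt{A}+\sqrt{B}$ combined with $\sqrt{2}\le 2$, which is what produces the constant $2$ appearing in the statement.
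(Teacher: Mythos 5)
Your proposal is correct and follows essentially the same route as the paper: differentiate $e^{-2V}|x|^{2}$ by the Stieltjes product rule so that the $dV$ term cancels, bound the $dN$ integral by the supremum of $e^{-V}|x|$ and absorb it (you via the quadratic inequality $a^{2}\le c^{2}+2ab$, the paper via Young's inequality — both yield the constant $2$), and for the $R=0$ case regularize with $\sqrt{|y|^{2}+\varepsilon}$, which is exactly the paper's $u_{\varepsilon}$. No gaps.
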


\begin{proof}
Let $u_{\varepsilon
}(r)=|x(r)|^{2}e^{-2V(r)}+\varepsilon $, for $\varepsilon >0$. We have as
signed measures on $[0,\infty )$%
\begin{eqnarray*}
du_{\varepsilon }(r) &=&-2e^{-2V(r)}|x(r)|^{2}dV(r)+2e^{-2V(r)}\left\langle
x(r),dx(r)\right\rangle \medskip  \\
&\leq &2e^{-2V(r)}dR(r)+2e^{-2V(r)}|x(r)|dN(r)\medskip  \\
&\leq &2e^{-2V(r)}dR(r)+2e^{-V(r)}\sqrt{u_{\varepsilon }(r)}dN(r).
\end{eqnarray*}

If $R=0$ then%
\begin{equation*}
d\left( \sqrt{u_{\varepsilon }(r)}\right) =\frac{du_{\varepsilon }(r)}{2%
\sqrt{u_{\varepsilon }(r)}}\leq e^{-V(r)}dN(r),
\end{equation*}
and, consequently, for $0\leq t\leq s$, $\sqrt{u_{\varepsilon }(s)}\leq
\sqrt{u_{\varepsilon }(t)}+\int_{t}^{s}e^{-V(r)}dN(r)$, that yields (\ref{AnC-dxRNV}) by passing to limit as $\varepsilon \rightarrow 0$.

If $R\neq 0$ we have%
\[
\begin{array}{l}
e^{-2V(s)}|x(s)|^{2}\medskip  \\
\quad \leq
e^{-2V(t)}|x(t)|^{2}+2\int_{t}^{s}e^{-2V(r)}dR(r)+2%
\int_{t}^{s}e^{-2V(r)}|x(r)|dN(r)\medskip  \\
\quad \leq e^{-2V(t)}|x(t)|^{2}+2\int_{t}^{s}e^{-2V(r)}dR(r)+2\left\Vert
e^{-V}x\right\Vert _{\left[ t,T\right] }\int_{t}^{s}e^{-V(r)}dN(r)\medskip
\\
\quad \leq |e^{-V(t)}x(t)|^{2}+2\int_{t}^{T}e^{-2V(r)}dR(r)+\dfrac{1}{2}%
\left\Vert e^{-V}x\right\Vert _{\left[ t,T\right] }^{2}+2\left(
\int_{t}^{T}e^{-V(r)}dN(r)\right) ^{2}.%
\end{array}%
\]%
Hence, for all $t\leq \tau \leq T$,%
\begin{eqnarray*}
e^{-2V(\tau )}|x(\tau )|^{2} &\leq &\left\Vert e^{-V}x\right\Vert _{\left[
t,T\right] }^{2}\medskip  \\
&\leq &2e^{-2V(t)}|x(t)|^{2}+4\int_{t}^{T}e^{-2V(s)}dR(s)+4\left(
\int_{t}^{T}e^{-V(s)}dN(s)\right) ^{2}
\end{eqnarray*}%
and the result follows.
\end{proof}

Recall, from Pardoux \& R\u{a}\c{s}canu \cite{Pardoux/Rascanu:09}, an
estimate on the local semimartingale $X\in S_{d}^{0}$ of the form%
\begin{equation}
X_{t}=X_{0}+K_{t}+\int_{0}^{t}G_{s}dB_{s},\;\,t\geq 0,\quad \mathbb{P}-a.s.,
\label{AnexC-fsde0}
\end{equation}%
where\medskip \newline
$\lozenge $\quad $K\in S_{d}^{0},\;$ $K \in BV_{loc}\left(\left[
0,\infty \right[;\mathbb{R}^{d}\right),\;K_{0}=0,\;\mathbb{P}
-a.s.,\smallskip $\newline
$\lozenge $\quad $G\in \Lambda _{d\times k}^{0}.$\smallskip

\noindent For $p\geq 1$ denote $m_{p}\overset{def}{=}1\vee \left( p-1\right)$ and we have the following result.

\begin{proposition}
\label{AnexC-p0-fsi} Let $X\in S_{d}^{0}$ be a local semimartingale of the
form (\ref{AnexC-fsde0}). Assume there exist $p\geq 1$ and $V$ a $\mathcal{P}-$m.b-v.c.s.p.$,$ $V_{0}=0,$ such that as signed measures on $[0,\infty[:$
\begin{equation}
\left\langle X_{t},dK_{t}\right\rangle +\frac{1}{2}m_{p}\left\vert
G_{t}\right\vert ^{2}dt\leq |X_{t}|^{2}dV_{t},\;\;\mathbb{P}-a.s..
\label{AnexC-fsde-ip3}
\end{equation}%
Then, for all $\delta \geq 0$, $0\leq t\leq s,$ we have that%
\begin{equation}
\mathbb{E}^{\mathcal{F}_{t}}\frac{\left\vert e^{-V_{s}}X_{s}\right\vert ^{p}%
}{\left( 1+\delta \left\vert e^{-V_{s}}X_{s}\right\vert ^{2}\right) ^{p/2}}%
\leq \frac{\left\vert e^{-V_{t}}X_{t}\right\vert ^{p}}{\left( 1+\delta
\left\vert e^{-V_{t}}X_{t}\right\vert ^{2}\right) ^{p/2}}~,\;\mathbb{P}-a.s..
\label{AnexC-fsde-p1}
\end{equation}
\end{proposition}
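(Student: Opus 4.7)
The conclusion asserts that for each fixed $\delta \ge 0$ the process
$Z_t := |e^{-V_t}X_t|^p/(1+\delta|e^{-V_t}X_t|^2)^{p/2}$
is a conditional supermartingale. The plan is to write $Z_t = \Phi(\tilde u_t)$ with $\tilde u_t := |e^{-V_t}X_t|^2$ and $\Phi(u) := u^{p/2}/(1+\delta u)^{p/2}$, derive a scalar semimartingale inequality for $\tilde u$ from the hypothesis, then apply It\^o's formula to $\Phi(\tilde u_t)$; the exponent $m_p = 1 \vee (p-1)$ is precisely the threshold that forces the resulting drift to be non-positive.

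Setting $\tilde X_t = e^{-V_t}X_t$ and $\tilde G_t = e^{-V_t}G_t$, I will first use the product rule (since $e^{-2V_t}$ has bounded variation) together with the It\^o rule for $|X_t|^2$ to obtain
\[
d\tilde u_t = e^{-2V_t}\bigl[2\langle X_t,dK_t\rangle + |G_t|^2\,dt - 2|X_t|^2\,dV_t\bigr] + 2\langle \tilde X_t, \tilde G_t\,dB_t\rangle.
\]
Substituting $2\langle X_t, dK_t\rangle + m_p|G_t|^2\,dt \le 2|X_t|^2\,dV_t$, as signed measures, gives
\[
d\tilde u_t \;\le\; -(m_p-1)|\tilde G_t|^2\,dt + 2\langle \tilde X_t,\tilde G_t\,dB_t\rangle,
\]
so $\tilde u$ decomposes as $\tilde u_0 + \tilde M_t + \tilde A_t$ with $\tilde M$ a continuous local martingale satisfying $d\langle \tilde M\rangle_t = 4|\tilde G_t^{\top}\tilde X_t|^2\,dt$, and $\tilde A$ non-increasing (there is a non-negative slack $dC_t \ge 0$ turning the inequality into an equality).

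Next I will compute $\Phi'(u) = \tfrac{(p/2)u^{p/2-1}}{(1+\delta u)^{p/2+1}} \ge 0$ and the key ratio
\[
\frac{2u\,\Phi''(u)}{\Phi'(u)} \;=\; \frac{(p-2)-4\delta u}{1+\delta u}.
\]
Since $\Phi$ fails to be $C^2$ at $0$ when $p<2$, It\^o's formula will first be applied to the smoothing $\Phi_\varepsilon(u) := (u+\varepsilon)^{p/2}/(1+\delta(u+\varepsilon))^{p/2}$, with $\varepsilon\downarrow 0$ taken at the end. Using $\Phi'\ge 0$ to absorb the inequality for $d\tilde u_t$ and $-\Phi'(\tilde u_t)\,dC_t \le 0$, the drift of $\Phi(\tilde u_t)$ is dominated by $\bigl[-(m_p-1)\Phi'(\tilde u_t)|\tilde G_t|^2 + 2\Phi''(\tilde u_t)|\tilde G_t^{\top}\tilde X_t|^2\bigr]dt$. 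On $\{\Phi''(\tilde u_t)\ge 0\}$ I will bound $|\tilde G_t^{\top}\tilde X_t|^2 \le \tilde u_t|\tilde G_t|^2$ and reduce non-positivity to the elementary inequality $(p-2)-4\delta u \le (m_p-1)(1+\delta u)$, which is trivial for $1\le p\le 2$ (LHS $\le 0$, RHS $=0$) and reduces to $-(p+2)\delta u\le 0$ for $p\ge 2$ (where $m_p = p-1$). On $\{\Phi''(\tilde u_t)<0\}$ both terms are already $\le 0$. Thus in every case $\Phi(\tilde u_t)$ is a continuous local supermartingale.

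For $\delta > 0$ the function $\Phi$ is bounded by $\delta^{-p/2}$, so localizing by a sequence $\tau_n\uparrow\infty$ and applying bounded convergence upgrades local to true conditional supermartingale, yielding~(\ref{AnexC-fsde-p1}); the case $\delta=0$ then follows by monotone convergence in $\delta\searrow 0$ (since $\Phi_\delta(u)\nearrow u^{p/2}$ as $\delta\searrow 0$). The main obstacle will be the pointwise algebraic inequality $(p-2)-4\delta u\le (m_p-1)(1+\delta u)$: it is exactly what pins down the threshold $m_p=1\vee(p-1)$ in the hypothesis, and once it is in place everything else is standard It\^o calculus, smoothing and localization.
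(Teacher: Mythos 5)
Your proposal is correct. Note that the paper itself gives no proof of this proposition: it is recalled verbatim from the monograph of Pardoux and R\u{a}\c{s}canu \cite{Pardoux/Rascanu:09}, so there is no in-paper argument to compare against; your route (product rule for $e^{-2V_t}|X_t|^2$, It\^o's formula for $\Phi(u)=u^{p/2}(1+\delta u)^{-p/2}$, the pointwise inequality $2u\Phi''(u)\leq (m_p-1)\Phi'(u)$ on $\{\Phi''\geq 0\}$, localization for $\delta>0$, then $\varepsilon\downarrow 0$ and $\delta\downarrow 0$) is exactly the standard argument behind this estimate, and each step checks out, including the identification $m_p=1\vee(p-1)$ as the threshold making the drift nonpositive. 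One small correction: $\Phi$ fails to be $C^{2}$ at $u=0$ not only for $p<2$ but for all $p\in[1,4)$ with $p\neq 2$ (since $\Phi''(u)\sim c\,u^{p/2-2}$ near $0$), so the $\varepsilon$-smoothing $\Phi_{\varepsilon}$ should be used for every $p$ rather than only for $p<2$; your argument goes through unchanged because, on $\{\Phi''(\tilde u_t+\varepsilon)\geq 0\}$, the bound $2\tilde u_t\Phi''(\tilde u_t+\varepsilon)\leq 2(\tilde u_t+\varepsilon)\Phi''(\tilde u_t+\varepsilon)\leq(m_p-1)\Phi'(\tilde u_t+\varepsilon)$ still holds.
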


\subsection{Tightness results}

The next five results are given without proofs; you can find them in the monograph \cite{Pardoux/Rascanu:09}.

\begin{proposition}
\label{ch1-p1-tight}Let $\left\{X_{t}^{n}:t\geq 0\right\}$, $n\in\mathbb{N}^{\ast}$, be a family of $\mathbb{R}^{d}-$valued continuous stochastic
processes defined on probability space $\left(\Omega,\mathcal{F},\mathbb{P}\right)$. Suppose that, for every $T\geq 0$, there exist $\alpha=\alpha_{T}>0$ and $b=b_{T}\in C\left(\mathbb{R}_{+}\right)$ with $b(0)=0$ (both independent of $n$), such that
$$
\begin{array}{ll}
\left(i\right)\quad & \lim\limits_{N\rightarrow \infty}\left[\sup\limits_{n\in \mathbb{N}^{\ast}}\mathbb{P}(\{\left\vert
X_{0}^{n}\right\vert\geq N\})\right]=0,\medskip \\
\left(ii\right) \quad & \mathbb{E}\left[1\wedge \sup\limits_{0\leq s\leq
\varepsilon}\left\vert X_{t+s}^{n}-X_{t}^{n}\right\vert^{\alpha}\right]
\leq \varepsilon\cdot b(\varepsilon),\,\forall ~\varepsilon>0,n\geq 1,\;t\in \left[0,T\right].
\end{array}
$$
Then $\left\{ X^{n}:n\in\mathbb{N}^{\ast}\right\}$ is tight in $C(\mathbb{R}_{+};\mathbb{R}^{d})$.
\end{proposition}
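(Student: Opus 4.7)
The plan is to verify tightness of $\{X^n\}$ on $C(\mathbb{R}_+;\mathbb{R}^d)$ by checking it on each $C([0,T];\mathbb{R}^d)$ (since a set in $C(\mathbb{R}_+;\mathbb{R}^d)$, equipped with uniform convergence on compacts, is relatively compact iff its restriction to $[0,T]$ is relatively compact for every $T$, a diagonal/countable-intersection argument then upgrades tightness on each $[0,T]$ to tightness on $\mathbb{R}_+$). Fix $T>0$. By the Arzel\`a--Ascoli theorem, tightness on $C([0,T];\mathbb{R}^d)$ reduces to two conditions: control of the initial value, and control of the modulus of continuity $\mathbf{m}_{X^n}(\delta)$. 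The first is exactly hypothesis $(i)$. The work is to obtain, for every $\eta,\eta'>0$, some $\delta>0$ with
\[
\sup_{n\geq 1}\mathbb{P}\!\left(\mathbf{m}_{X^n}(\delta)\geq \eta'\right)\leq \eta .
\]

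First I would discretize: for $\varepsilon\in(0,T]$ take the grid $t_i=i\varepsilon$, $i=0,1,\ldots,\lceil T/\varepsilon\rceil$. If $s,t\in[0,T]$ with $|t-s|\leq \varepsilon$ then both lie in some $[t_i,t_{i+2}]$, whence
\[
|X^n_t-X^n_s|\leq 2\max_{0\leq i\leq \lceil T/\varepsilon\rceil}\ \sup_{0\leq r\leq 2\varepsilon}|X^n_{t_i+r}-X^n_{t_i}|,
\]
so $\mathbf{m}_{X^n}(\varepsilon)\leq 2\max_i \sup_{0\leq r\leq 2\varepsilon}|X^n_{t_i+r}-X^n_{t_i}|$. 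Next I would apply a Markov-type estimate using the truncation $1\wedge(\cdot)$: for any $\eta'\in(0,1]$ and any $i$,
\[
\mathbb{P}\!\left(\sup_{0\leq r\leq 2\varepsilon}|X^n_{t_i+r}-X^n_{t_i}|\geq \eta'\right)
=\mathbb{P}\!\left(1\wedge \sup_{0\leq r\leq 2\varepsilon}|X^n_{t_i+r}-X^n_{t_i}|^{\alpha}\geq (\eta')^{\alpha}\right)
\leq (\eta')^{-\alpha}\,(2\varepsilon)\,b(2\varepsilon),
\]
by hypothesis $(ii)$. A union bound over the $O(T/\varepsilon)$ grid points then yields
\[
\mathbb{P}\!\left(\mathbf{m}_{X^n}(\varepsilon)\geq 2\eta'\right)\leq \left(\tfrac{T}{\varepsilon}+1\right)(\eta')^{-\alpha}(2\varepsilon)\,b(2\varepsilon)\leq C_T(\eta')^{-\alpha}\,b(2\varepsilon),
\]
with $C_T$ independent of $n$. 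Since $b(2\varepsilon)\to 0$ as $\varepsilon\downarrow 0$, the right-hand side can be made smaller than $\eta$ by choosing $\delta=\varepsilon$ small enough.

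Combining this modulus-of-continuity control with assumption $(i)$ (pick $N$ large so that $\sup_n \mathbb{P}(|X^n_0|\geq N)<\eta/2$) and using the standard Arzel\`a--Ascoli criterion --- the set
\[
K_{N,\{\delta_k\}}=\Bigl\{f\in C([0,T];\mathbb{R}^d):|f(0)|\leq N,\ \mathbf{m}_f(\delta_k)\leq 2^{-k}\ \forall k\Bigr\}
\]
is relatively compact --- one chooses $\delta_k$ so that $\sup_n\mathbb{P}(\mathbf{m}_{X^n}(\delta_k)>2^{-k})<\eta 2^{-(k+1)}$, whence $\inf_n\mathbb{P}(X^n\in \overline{K_{N,\{\delta_k\}}})\geq 1-\eta$. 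This gives tightness on $C([0,T];\mathbb{R}^d)$, and, as noted, therefore on $C(\mathbb{R}_+;\mathbb{R}^d)$.

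The main obstacle is the step from the local increment bound $(ii)$ (which is stated in terms of the truncated $L^\alpha$-norm, not $L^\alpha$ itself) to a probability estimate on $\sup_s |X^n_{t+s}-X^n_t|$. The truncation is what allows us to avoid integrability assumptions: one uses $\mathbb{P}(Y\geq \eta')=\mathbb{P}(1\wedge Y^\alpha\geq (\eta')^\alpha)$ for $\eta'\leq 1$, which is the natural analogue of Markov's inequality here. Once this estimate is in place, the rest is a routine discretization and union bound, and the independence of $\alpha$ and $b$ of $n$ is exactly what makes the estimates uniform in $n$.
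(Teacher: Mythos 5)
The paper does not actually prove this proposition: it appears in the Annex among the ``five results given without proofs'' quoted from the Pardoux--R\u{a}\c{s}canu monograph, so there is no in-paper argument to compare against. Your proof is correct and is the standard argument for this kind of tightness criterion --- reduce to $C([0,T];\mathbb{R}^{d})$, convert the truncated-moment bound $(ii)$ into a probability bound on the modulus of continuity via the identity $\{Y\geq \eta'\}=\{1\wedge Y^{\alpha}\geq (\eta')^{\alpha}\}$ for $\eta'\leq 1$ together with Markov's inequality and a union bound over an $\varepsilon$-grid (the factor $(T/\varepsilon)\cdot\varepsilon$ staying bounded is exactly why the hypothesis carries the extra factor $\varepsilon$ in $\varepsilon\, b(\varepsilon)$), and conclude by the Arzel\`a--Ascoli compactness criterion; all the minor points (the factor $2$ from the grid, $\eta'\leq 1$, uniformity in $n$) are handled correctly.
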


\begin{proposition}
\label{ch1-lsc-SI} Consider $\varphi:\mathbb{R}^{d}\rightarrow]-\infty,+\infty]$ a l.s.c. function. Let $\left(X,K,V\right)$, $\left(X^{n},K^{n},V^{n}\right),~n\in\mathbb{N}$, be $C\left(\left[0,T\right];
\mathbb{R}^{d}\right)^{2}\times C\left(\left[0,T\right];\mathbb{R}\right)-$valued random variables, such that
$$
\left(X^{n},K^{n},V^{n}\right)
\xrightarrow
[n\rightarrow\infty]{law}
\left(X,K,V\right)
$$
and, for all $0\leq s<t$, and $n\in\mathbb{N}^{\ast}$
$$
\left\updownarrow K^{n}\right\updownarrow _{t}-\left\updownarrow
K^{n}\right\updownarrow _{s}\leq V_{t}^{n}-V_{s}^{n}\;\;a.s.
$$
and%
$$
{\displaystyle\int_{s}^{t}}\varphi \left( X_{r}^{n}\right)dr\leq \displaystyle\int_{s}^{t}\left\langle X_{r}^{n}~,dK_{r}^{n}\right\rangle,\;\;a.s..
$$
Then $\left\updownarrow K\right\updownarrow_{t}-\left\updownarrow
K\right\updownarrow_{s}\leq V_{t}-V_{s}~,\;a.s.$ and
$$
\displaystyle\int_{s}^{t}\varphi\left(X_{r}\right)dr\leq\displaystyle\int_{s}^{t}\left\langle X_{r}~,dK_{r}\right\rangle,\;\;a.s..
$$
\end{proposition}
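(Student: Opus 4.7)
The plan is to reduce the weak-convergence statement to almost-sure statements via Skorohod's representation theorem, and then exploit pathwise lower semicontinuity of the variation functional, of the Stieltjes integral, and of $y \mapsto \int_s^t \varphi(y(r))\,dr$.

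\textbf{Step 1 (Skorohod representation).} Since $(X^n,K^n,V^n)\xrightarrow{\text{law}}(X,K,V)$ in the Polish space $C([0,T];\mathbb{R}^d)^2\times C([0,T];\mathbb{R})$, I would invoke Skorohod's theorem to construct, on an auxiliary probability space, processes $(\tilde X^n,\tilde K^n,\tilde V^n)$ and $(\tilde X,\tilde K,\tilde V)$ with the same respective laws and with $(\tilde X^n,\tilde K^n,\tilde V^n)\to(\tilde X,\tilde K,\tilde V)$ uniformly on $[0,T]$, $\mathbb{P}$-a.s. Both inequalities in the hypothesis depend only on the law of $(X^n,K^n,V^n)$, so they persist for the tilde processes; conversely, any a.s.\ conclusion for $(\tilde X,\tilde K,\tilde V)$ transfers to $(X,K,V)$ by equality of laws.

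\textbf{Step 2 (Variation bound).} Fix a partition $\Delta=(s=t_0<t_1<\cdots<t_m=t)$. The hypothesis yields $S_\Delta(\tilde K^n)\le \updownarrow\tilde K^n\updownarrow_t-\updownarrow\tilde K^n\updownarrow_s\le\tilde V^n_t-\tilde V^n_s$ a.s. Because $S_\Delta$ depends continuously on finitely many evaluations of the path, and $\tilde K^n\to\tilde K$ uniformly, I can pass to the a.s.\ limit and obtain $S_\Delta(\tilde K)\le\tilde V_t-\tilde V_s$. Taking the supremum over partitions, first for $(s,t)$ ranging in a countable dense subset of $[0,T]^2$ and then extending by continuity of $\tilde V$ and lower semicontinuity of $t\mapsto \updownarrow\tilde K\updownarrow_t$, I obtain $\updownarrow\tilde K\updownarrow_t-\updownarrow\tilde K\updownarrow_s\le\tilde V_t-\tilde V_s$ a.s.\ for all $0\le s\le t\le T$.

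\textbf{Step 3 (Variational inequality).} Two convergences are needed. On the right-hand side, Step 2 furnishes the uniform bound $\updownarrow\tilde K^n\updownarrow_T\le\tilde V^n_T$, and combined with the uniform convergence $\tilde X^n\to\tilde X$ and $\tilde K^n\to\tilde K$ this is exactly the setting of the Helly--Bray theorem for Riemann--Stieltjes integrals, so
\[
\int_s^t\langle\tilde X^n_r,d\tilde K^n_r\rangle\longrightarrow\int_s^t\langle\tilde X_r,d\tilde K_r\rangle\quad\text{a.s.}
\]
On the left-hand side, lower semicontinuity of $\varphi$ (together with a lower affine bound, available here from the convex setting in which this proposition is applied) gives $\varphi(\tilde X_r)\le\liminf_n\varphi(\tilde X^n_r)$ pointwise, and Fatou's lemma then yields
\[
\int_s^t\varphi(\tilde X_r)\,dr\le\liminf_{n\to\infty}\int_s^t\varphi(\tilde X^n_r)\,dr.
\]
Chaining these two with the hypothesis $\int_s^t\varphi(\tilde X^n_r)\,dr\le\int_s^t\langle\tilde X^n_r,d\tilde K^n_r\rangle$ gives the desired inequality for the tilde triple, and equality of laws transports it back to $(X,K,V)$.

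\textbf{Main obstacle.} The crucial technical point is the passage to the limit in the Stieltjes integral: uniform convergence of the integrands $\tilde X^n$ is not enough on its own, one must control the total variation of the integrators $\tilde K^n$ uniformly in $n$. That uniform control is exactly what Step 2 supplies through the domination by $\tilde V^n$, so the two parts of the proposition must be proved in this logical order rather than independently.
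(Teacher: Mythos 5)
The paper itself gives no proof of Proposition~\ref{ch1-lsc-SI}: it is one of the five results in Section~4.4 imported without proof from the monograph of Pardoux and R\u{a}\c{s}canu \cite{Pardoux/Rascanu:09}, so there is no in-paper argument to compare against. Your proof is the standard and correct route, and it is essentially complete: Skorohod representation (legitimate because both hypotheses and both conclusions are expressed through Borel functionals of the paths --- the total variation as a supremum of continuous functionals, the Riemann--Stieltjes integral as a pointwise limit of Riemann sums on the Borel set $\{\left\updownarrow k\right\updownarrow_T<\infty\}$ --- hence are law-determined), lower semicontinuity of $S_\Delta$ and of the variation under uniform convergence, Helly--Bray with the uniform variation bound supplied by $\tilde V^n_T$, and Fatou for the $\varphi$ term. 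Two small points are worth tightening. First, for Fatou you appeal to ``a lower affine bound from the convex setting,'' but the statement only assumes $\varphi$ l.s.c.; the cleaner justification, which needs no convexity, is that the uniformly convergent paths $\tilde X^n_r$, $r\in[s,t]$, lie ($\omega$-wise) in a compact set on which any l.s.c.\ function with values in $\left]-\infty,+\infty\right]$ is bounded below, which is all Fatou requires. Second, the null sets in Steps 2--3 depend a priori on $(s,t)$ and on the partition; you handle this for Step 2 by restricting to a countable dense family and extending by continuity, and the same remark should be understood for Step 3 (the conclusion as stated is ``for all $0\le s<t$, a.s.'', so even the weaker per-$(s,t)$ version suffices). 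With these clarifications the argument is sound, and your closing observation --- that the variation bound must be established first because it is what makes the Helly--Bray passage legitimate --- correctly identifies the logical structure.
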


\begin{proposition}
\label{ch3-c4-cont} Let $X,\hat{X}\in S_{d}^{0}\left[0,T\right]$ and $B,\hat{B}$ be two $\mathbb{R}^{k}-$Brownian motions and $g:\mathbb{R}_{+}\times\mathbb{R}^{d}\rightarrow\mathbb{R}^{d\times k}$ be a function
satisfying
$$
\begin{array}{l}
g\left(\cdot,y\right) \textrm{ is measurable\ }\forall ~y\in \mathbb{R}^{d},\;\;\textrm{and}\medskip \\
y\mapsto g\left(t,y\right) \textrm{is continuous\ }dt-a.e..
\end{array}
$$
If
$$
\mathcal{L}\left( X,B\right) =\mathcal{L}(\hat{X},\hat{B}),\;\;\textrm{on\ }C(\mathbb{R}_{+},\mathbb{R}^{d+k}),
$$
then
$$
\mathcal{L}\left(X,B,\int_{0}^{\cdot}g\left( s,X_{s}\right) dB_{s}\right)=
\mathcal{L}\left(\hat{X},\hat{B},\int_{0}^{\cdot}g\left(s,\hat{X}_{s}\right) d\hat{B}_{s}\right),\;\;\textrm{on\ } C(\mathbb{R}_{+},\mathbb{R}^{d+k+d}).
$$
\end{proposition}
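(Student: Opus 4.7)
The plan is to establish the claim by the standard \emph{simple-process approximation} technique: approximate the stochastic integral by Riemann–Stieltjes-type sums that are pathwise-measurable functionals of $(X,B)$, transfer the equality in law to these approximations trivially, and then pass to the limit in probability. The continuity-in-$x$ half of the Carathéodory assumption is precisely what makes the approximating sums converge to the stochastic integral.

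First I would localize. For each $N\in\mathbb{N}^\ast$, introduce the stopping times
$\tau_N=\inf\{t\geq 0:\int_0^t|g(s,X_s)|^2\,ds\geq N\}\wedge T$ and $\hat\tau_N$ defined analogously for $\hat X$. Because the joint laws of $(X,B)$ and $(\hat X,\hat B)$ agree, the laws of $(X,B,\tau_N)$ and $(\hat X,\hat B,\hat\tau_N)$ also agree (both stopping times are the same measurable functional of the path of $X$ only). Replacing $g$ by $g\mathbf 1_{[0,\tau_N]}$, it suffices to treat the case where $\int_0^T|g(s,X_s)|^2\,ds\leq N$ a.s.\ and the analogous bound for $\hat X$; we recover the general claim by letting $N\to\infty$ since $\tau_N\to T$, $\hat\tau_N\to T$ almost surely.

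Next I would introduce the piecewise-constant approximation. For $n\geq 1$ set $t^n_i=iT/2^n$ and define the dyadic approximation $X^{(n)}_s=X_{t^n_i}$ for $s\in[t^n_i,t^n_{i+1})$, and likewise $\hat X^{(n)}$. Consider the sums
\[
I^n_t=\sum_{i=0}^{2^n-1}g\bigl(t^n_i,X_{t^n_i}\bigr)\bigl(B_{t^n_{i+1}\wedge t}-B_{t^n_i\wedge t}\bigr),\qquad \hat I^n_t=\sum_{i=0}^{2^n-1}g\bigl(t^n_i,\hat X_{t^n_i}\bigr)\bigl(\hat B_{t^n_{i+1}\wedge t}-\hat B_{t^n_i\wedge t}\bigr).
\]
The map $\Phi_n:C(\mathbb R_+;\mathbb R^{d+k})\to C(\mathbb R_+;\mathbb R^d)$ that sends $(x,b)\mapsto \sum g(t^n_i,x(t^n_i))(b(\cdot\wedge t^n_{i+1})-b(\cdot\wedge t^n_i))$ is Borel measurable (continuity of $x\mapsto g(t^n_i,x)$ for $dt$-a.e.\ $t^n_i$ plus measurability in $t$ give Borel measurability of each summand, hence of $\Phi_n$). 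Therefore $I^n=\Phi_n(X,B)$ and $\hat I^n=\Phi_n(\hat X,\hat B)$, and the assumption $\mathcal L(X,B)=\mathcal L(\hat X,\hat B)$ yields
\[
\mathcal L\bigl(X,B,I^n\bigr)=\mathcal L\bigl(\hat X,\hat B,\hat I^n\bigr)\quad\text{on } C(\mathbb R_+;\mathbb R^{d+k+d}).
\]

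Finally I would pass to the limit. By continuity of $y\mapsto g(t,y)$ and uniform continuity of the path $X$ on $[0,T]$, one has $g(t,X^{(n)}_t)\to g(t,X_t)$ for $dt\otimes d\mathbb P$-a.e.\ $(t,\omega)$; together with the localization bound $\int_0^T|g(s,X_s)|^2ds\leq N$ and the Carathéodory structure (which forces $|g(s,X^{(n)}_s)|\leq \sup_{|y|\leq \|X\|_T}|g(s,y)|$, a square-integrable-in-$s$ majorant on the localized event), dominated convergence gives $\int_0^T|g(s,X_s)-g(s,X^{(n)}_s)|^2ds\to 0$ in probability. The It\^o isometry then yields $\sup_{t\leq T}|I^n_t-\int_0^t g(s,X_s)dB_s|\to 0$ in probability; the same argument applies verbatim to $(\hat X,\hat B)$ and gives $\hat I^n\to \int_0^\cdot g(s,\hat X_s)d\hat B_s$ in probability. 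Since equality in law is preserved under joint convergence in probability of the respective coordinates, the conclusion follows after undoing the localization by letting $N\to\infty$.

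The main obstacle is the limit step: one must check that the Carathéodory assumption together with continuity of the paths of $X$ really does force $g(\cdot,X^{(n)}_\cdot)\to g(\cdot,X_\cdot)$ in $L^2([0,T];\mathbb R^{d\times k})$ in probability. This is where the apparently weak hypothesis (continuity only for $dt$-a.e.\ $t$) is used carefully, together with a localizing majorant; the rest of the argument is purely mechanical transfer of laws under measurable maps and taking limits in probability.
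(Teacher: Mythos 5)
The paper itself gives no proof of this proposition (it is listed among the results quoted without proof from the monograph of Pardoux and R\u{a}\c{s}canu), so your argument can only be judged on its own terms. The overall strategy --- approximate the stochastic integral by a Borel functional of the pair of paths, transfer the equality of laws through that functional, and pass to the limit in probability --- is the right one, but there is a genuine gap at its centre: the object you prove is a measurable functional of $(X,B)$ is not the object you prove converges to the stochastic integral. Your sums $I^n_t=\sum_i g(t^n_i,X_{t^n_i})(B_{t^n_{i+1}\wedge t}-B_{t^n_i\wedge t})$ freeze \emph{both} arguments of $g$ at the grid points, whereas your limit step analyses $g(t,X^{(n)}_t)$, which freezes only the spatial argument; the two coincide only if $g$ is piecewise constant in time along the grid. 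Since $g$ is merely \emph{measurable} in $t$, the sums $I^n$ need not converge to $\int_0^{\cdot}g(s,X_s)\,dB_s$ at all: take $d=k=1$ and $g(t,y)=\mathbf{1}_{\mathbb{R}\setminus D}(t)$ with $D$ the set of dyadic rationals. Then every $g(t^n_i,\cdot)=0$, so $I^n\equiv 0$, while $\int_0^t g(s,X_s)\,dB_s=B_t$. (For the same reason $\Phi_n$ need not even be Borel: the exceptional $dt$-null set on which $y\mapsto g(t,y)$ fails to be continuous may contain the grid points, and nothing is assumed about $g(t,\cdot)$ there.) Conversely, the process $\int_0^{\cdot}g(s,X^{(n)}_s)\,dB_s$, which your limit argument does handle correctly, is a genuine stochastic integral of a time-dependent integrand and is not exhibited as a Borel functional of the paths, so the law-transfer step does not apply to it.

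The standard repair --- and the same mollification device the paper uses for $M^n$ in the proof of Theorem \ref{weak-ex-OSVI} --- is to average in time rather than evaluate pointwise: set $G^n_s=\frac{1}{h_n}\int_{(s-h_n)\vee 0}^{s}g(r,X_r)\,dr$, or the analogous piecewise-constant average over the preceding dyadic interval, and put $I^n_t=\int_0^t G^n_s\,dB_s$. This reduces to a finite Riemann--Stieltjes sum whose coefficients $\int_{t^n_{i-1}}^{t^n_i}g(r,x(r))\,dr$ are Borel functionals of the path $x$ by the Carath\'eodory hypothesis and Fubini; the averages are adapted; and $G^n\to g(\cdot,X_{\cdot})$ in $L^2(0,T)$ pathwise by the $L^2$-continuity of translations, using only $\int_0^T|g(s,X_s)|^2ds<\infty$. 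This also removes the need for the square-integrable majorant $\sup_{|y|\le\|X\|_T}|g(s,y)|$ that your dominated-convergence step invokes but that the hypotheses of the proposition do not supply. With that substitution, the remaining pieces of your argument (localization by $\tau_N$, transfer of laws under Borel maps, passage to the limit in probability) go through.
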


\begin{lemma}
\label{ch1-p2-tight} Let $g:\mathbb{R}_{+}\rightarrow\mathbb{R}_{+}$ be a
continuous function satisfying $g\left(0\right)=0$ and $G:C\left(\mathbb{R}_{+};\mathbb{R}^{d}\right)\rightarrow \mathbb{R}_{+}$ be a mapping which
is bounded on compact subsets of $C\left(\mathbb{R}_{+};\mathbb{R}
^{d}\right).$ Let $X^{n},Y^{n}$, $n\in\mathbb{N}^{\ast}$, be random
variables with values in $C\left(\mathbb{R}_{+};\mathbb{R}^{d}\right)$. If
$\left\{ Y^{n}:n\in \mathbb{N}^{\ast }\right\}$ is tight and, for all $n\in
\mathbb{N}^{\ast},$%
$$
\begin{array}{rl}
\left(i\right) \quad & \left\vert X_{0}^{n}\right\vert \leq G\left(Y^{n}\right),\;a.s.,\medskip \\
\left(ii\right)\quad & \mathbf{m}_{X^{n}}\left(\varepsilon;\left[0,T\right]\right)\leq G\left(Y^{n}\right) g\left(\mathbf{m}_{Y^{n}}\left(\varepsilon;
\left[0,T\right]\right)\right),\;a.s.,\;\;\forall ~\varepsilon,T>0,
\end{array}
$$
then $\left\{ X^{n}:n\in \mathbb{N}^{\ast }\right\}$ is tight.
\end{lemma}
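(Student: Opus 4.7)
The plan is to verify the standard Arzelà–Ascoli type criterion for tightness in $C(\mathbb{R}_{+};\mathbb{R}^{d})$ (of the same form as Proposition~\ref{ch1-p1-tight}, but stated with the modulus of continuity directly): namely, that $\{X_{0}^{n}\}$ is tight in $\mathbb{R}^{d}$, and that for each fixed $T>0$,
\[
\lim_{\varepsilon \downarrow 0}\,\sup_{n\in \mathbb{N}^{\ast}}\,\mathbb{P}\bigl(\mathbf{m}_{X^{n}}(\varepsilon;[0,T])>\eta\bigr)=0,\quad \forall \eta>0.
\]
Both conditions will be obtained by transferring the corresponding properties from $\{Y^{n}\}$ via the pathwise domination hypotheses $(i)$ and $(ii)$.

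First I would fix $\eta>0$ and invoke the tightness of $\{Y^{n}\}$, together with Prokhorov's theorem, to produce a compact set $\mathcal{K}_{\eta}\subset C(\mathbb{R}_{+};\mathbb{R}^{d})$ with $\mathbb{P}(Y^{n}\notin \mathcal{K}_{\eta})\leq \eta$ for every $n$. Since $G$ is bounded on compact subsets of $C(\mathbb{R}_{+};\mathbb{R}^{d})$, the quantity $M_{\eta}:=\sup_{y\in \mathcal{K}_{\eta}}G(y)$ is finite. The hypothesis $(i)$ then gives $|X_{0}^{n}|\leq M_{\eta}$ on the event $\{Y^{n}\in \mathcal{K}_{\eta}\}$, so that $\sup_{n}\mathbb{P}(|X_{0}^{n}|>M_{\eta})\leq \eta$, which proves tightness of $\{X_{0}^{n}\}$ in $\mathbb{R}^{d}$.

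Next, for a fixed $T>0$, I would exploit the fact that the restriction map $\mathcal{K}_{\eta}\ni y\mapsto y|_{[0,T]}$ has compact image in $C([0,T];\mathbb{R}^{d})$ and is therefore uniformly equicontinuous: $\delta_{\eta}(\varepsilon):=\sup_{y\in \mathcal{K}_{\eta}}\mathbf{m}_{y}(\varepsilon;[0,T])\to 0$ as $\varepsilon \downarrow 0$. Combining this with the continuity of $g$ at the origin and $g(0)=0$, one has $g(\delta_{\eta}(\varepsilon))\to 0$ as $\varepsilon\downarrow 0$. On the event $\{Y^{n}\in \mathcal{K}_{\eta}\}$, hypothesis $(ii)$ yields
\[
\mathbf{m}_{X^{n}}(\varepsilon;[0,T])\leq G(Y^{n})\,g\bigl(\mathbf{m}_{Y^{n}}(\varepsilon;[0,T])\bigr)\leq M_{\eta}\,g\bigl(\delta_{\eta}(\varepsilon)\bigr),
\]
using the monotonicity of $\sup$ and the fact that we may assume $g$ nondecreasing on a neighborhood of $0$ (otherwise replace it by its nondecreasing envelope near $0$, which still satisfies $g(0+)=0$). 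Consequently, for every $\theta>0$,
\[
\sup_{n}\mathbb{P}\bigl(\mathbf{m}_{X^{n}}(\varepsilon;[0,T])>\theta\bigr)\leq \eta+\sup_{n}\mathbb{P}\bigl(M_{\eta}\,g(\delta_{\eta}(\varepsilon))>\theta\bigr),
\]
and the second term vanishes once $\varepsilon$ is taken small enough (depending on $\eta,\theta$). Letting first $\varepsilon\downarrow 0$ and then $\eta\downarrow 0$ closes the argument.

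I do not anticipate a genuine obstacle: the whole point is that conditions $(i)$–$(ii)$ are \emph{pathwise} dominations, so tightness is transferred essentially by compactness of $\mathcal{K}_{\eta}$ and the fact that $g$ is continuous at $0$ with $g(0)=0$. The only mildly delicate point is handling $g$ which is not assumed monotone; this is dealt with by the standard device of replacing $g$ near $0$ by $\tilde{g}(r):=\sup_{0\leq s\leq r}g(s)$, which preserves continuity at $0$ and the vanishing there, and then applying $\tilde{g}$ in place of $g$ throughout.
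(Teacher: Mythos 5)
Your proof is correct. The paper itself gives no argument for this lemma (it is one of the Annex results cited without proof from the Pardoux--R\u{a}\c{s}canu monograph), so there is nothing to compare against line by line; your argument is the standard one that the omitted proof would follow: Prokhorov to get a compact set $\mathcal{K}_{\eta}$ carrying most of the mass of every $Y^{n}$, boundedness of $G$ and uniform equicontinuity of $\mathcal{K}_{\eta}|_{[0,T]}$ (Arzel\`a--Ascoli) to control $|X_{0}^{n}|$ and $\mathbf{m}_{X^{n}}(\varepsilon;[0,T])$ on that event, and the Billingsley-type criterion for tightness on $C(\mathbb{R}_{+};\mathbb{R}^{d})$ via restrictions to $[0,T]$. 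Your handling of the non-monotonicity of $g$ by passing to $\tilde{g}(r)=\sup_{0\leq s\leq r}g(s)$ is exactly the right (and necessary) fix, and the order of limits $\varepsilon\downarrow 0$ then $\eta\downarrow 0$ is sound since the quantity being bounded is monotone in $\varepsilon$.
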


\begin{lemma}
\label{ch2-p-conv}Let $B,$ $B^{n}$, $\bar{B}^{n}:\Omega\times\left[
0,\infty \right[ \rightarrow \mathbb{R}^{k}$ and $X$, $X^{n}$, $\bar{X}^{n}:\Omega \times\left[0,\infty\right[\rightarrow\mathbb{R}^{d\times k}$ be continuous stochastic processes such that

\begin{itemize}
\item[$\left(i\right)$] $\quad B^{n}$ is $\mathcal{F}_{t}^{B^{n},X^{n}}-$ Brownian motion, for all $n\geq 1,$

\item[$\left(ii\right)$] $\quad\mathcal{L}(X^{n},B^{n})=\mathcal{L}\left(
\bar{X}^{n},\bar{B}^{n}\right)$ on $C(\mathbb{R}_{+},\mathbb{R}^{d\times
k}\times \mathbb{R}^{k})$, for all $n\geq 1$,

\item[$\left( iii\right)$] $\quad\displaystyle\int_{0}^{T}\left\vert \bar{X}_{s}^{n}-\bar{X}_{s}\right\vert^{2}ds+\sup\limits_{t\in \left[0,T\right]}\left\vert \bar{B}_{t}^{n}-\bar{B}_{t}\right\vert\longrightarrow 0$ in
probability, as $n\rightarrow \infty$, for all $T>0.$
\end{itemize}

\noindent Then $(\bar{B}^{n},\{\mathcal{F}_{t}^{\bar{B}^{n},\bar{X}^{n}}\}),n\geq 1,\;$and $(\bar{B},\{\mathcal{F}_{t}^{\bar{B},\bar{X}}\})$
are Brownian motions and, as $n\rightarrow\infty$,
$$
\sup_{t\in\left[0,T\right]}\left\vert\int_{0}^{t}\bar{X}_{s}^{n}d\bar{B}_{s}^{n}-\int_{0}^{t}\bar{X}_{s}d\bar{B}_{s}\right\vert
\underset{n\rightarrow \infty}{\longrightarrow}0\quad \textrm{in probability}.
$$
\end{lemma}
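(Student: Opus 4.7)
\textbf{Proof plan for Lemma \ref{ch2-p-conv}.} The statement splits into two parts: (a) identifying $\bar B^n$ and $\bar B$ as Brownian motions with respect to the natural filtrations; and (b) proving convergence in probability of the stochastic integrals. I would treat them in that order.

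For (a), the Brownian motion property of $\bar B^n$ relative to $\{\mathcal{F}_t^{\bar B^n,\bar X^n}\}$ is essentially a measurability-plus-law argument. The fact that $B^n$ is an $\mathcal{F}_t^{B^n,X^n}$-Brownian motion can be encoded, for every $0\le s<t$, every $\lambda\in\mathbb{R}^k$, and every bounded continuous functional $F$ on $C([0,s];\mathbb{R}^{d+k})$, by the identity
\begin{equation*}
\mathbb{E}\!\left[F(X^n|_{[0,s]},B^n|_{[0,s]})\,e^{i\langle\lambda,\,B^n_t-B^n_s\rangle}\right]=\mathbb{E}\!\left[F(X^n|_{[0,s]},B^n|_{[0,s]})\right]e^{-\tfrac12|\lambda|^2(t-s)}.
\end{equation*}
Because assumption $(ii)$ says $\mathcal{L}(X^n,B^n)=\mathcal{L}(\bar X^n,\bar B^n)$ on $C(\mathbb{R}_+;\mathbb{R}^{d+k})$, the identity transfers verbatim with $(X^n,B^n)$ replaced by $(\bar X^n,\bar B^n)$, which is exactly the statement that $\bar B^n$ is an $\mathcal{F}_t^{\bar X^n,\bar B^n}$-Brownian motion. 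To pass this property to the limit, I would use assumption $(iii)$: from uniform convergence in probability of $\bar B^n\to\bar B$ and the $L^2$-convergence in probability of $\bar X^n\to\bar X$ one has joint convergence in law of $(\bar X^n|_{[0,s]},\bar B^n|_{[0,s]},\bar B^n_t-\bar B^n_s)$ to $(\bar X|_{[0,s]},\bar B|_{[0,s]},\bar B_t-\bar B_s)$; by bounded convergence the above characteristic-function identity survives for $(\bar X,\bar B)$, giving the Brownian motion property of $\bar B$ on $\{\mathcal{F}_t^{\bar X,\bar B}\}$.

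For (b), the plan is a standard three-term splitting. Fix $T>0$ and, for a mesh $\pi=\{0=t_0<\cdots<t_N=T\}$, introduce the step approximations
\begin{equation*}
\bar X^{n,\pi}_s=\sum_{i=0}^{N-1}\bar X^n_{t_i}\mathbf 1_{(t_i,t_{i+1}]}(s),\qquad \bar X^{\pi}_s=\sum_{i=0}^{N-1}\bar X_{t_i}\mathbf 1_{(t_i,t_{i+1}]}(s),
\end{equation*}
and decompose
\begin{align*}
\int_0^{\cdot}\bar X^n_s\,d\bar B^n_s-\int_0^{\cdot}\bar X_s\,d\bar B_s
&=\int_0^{\cdot}(\bar X^n_s-\bar X^{n,\pi}_s)\,d\bar B^n_s\\
&\quad+\Bigl[\int_0^{\cdot}\bar X^{n,\pi}_s\,d\bar B^n_s-\int_0^{\cdot}\bar X^{\pi}_s\,d\bar B_s\Bigr]\\
&\quad+\int_0^{\cdot}(\bar X^{\pi}_s-\bar X_s)\,d\bar B_s.
\end{align*}
The middle bracket is a finite sum $\sum_i \bar X^n_{t_i}(\bar B^n_{\cdot\wedge t_{i+1}}-\bar B^n_{\cdot\wedge t_i})-\sum_i \bar X_{t_i}(\bar B_{\cdot\wedge t_{i+1}}-\bar B_{\cdot\wedge t_i})$, and it converges uniformly in $t\in[0,T]$ to $0$ in probability as $n\to\infty$ by assumption $(iii)$ (using $\int_0^T|\bar X^n_s-\bar X_s|^2ds\to 0$ in probability and the continuity of each $\bar X$, plus Doob's inequality / continuous mapping). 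For the first and third terms one uses the Burkholder--Davis--Gundy inequality (valid since $\bar B^n$ and $\bar B$ are Brownian motions by part (a) and the integrands $\bar X^n-\bar X^{n,\pi}$, $\bar X-\bar X^{\pi}$ are adapted to the relevant filtrations) to reduce matters to controlling $\mathbb{E}\int_0^T|\bar X^n_s-\bar X^{n,\pi}_s|^2 ds$ and $\mathbb{E}\int_0^T|\bar X_s-\bar X^{\pi}_s|^2 ds$; by a localisation/truncation via $\tau_R=\inf\{t:|\bar X_t|+\sup_n|\bar X^n_t|\ge R\}\wedge T$ and the continuity of $\bar X$ these are made arbitrarily small, uniformly in $n$, by choosing the mesh fine enough. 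A diagonal argument in $(\pi,n)$ then yields the required convergence in probability.

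\textbf{Main obstacle.} The delicate point is the last step: although $\bar B^n$ is a Brownian motion on $\mathcal{F}_t^{\bar X^n,\bar B^n}$, the natural step approximation $\bar X^{n,\pi}$ of $\bar X^n$ is adapted to this filtration, but the $L^2$-control of $\bar X^n-\bar X^{n,\pi}$ must be uniform in $n$, which is why the localisation argument and the uniform path modulus coming from assumption $(iii)$ (equi-continuity in probability inherited by the whole family on compact time intervals) are essential. Without that uniform control, BDG alone would not close the estimate.
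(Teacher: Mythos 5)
First, note that the paper does not prove this lemma at all: it is one of the five auxiliary results in Section 4.4 stated without proof, with a reference to the Pardoux--R\u{a}\c{s}canu monograph \cite{Pardoux/Rascanu:09}; so your proposal can only be judged on its own merits. Part (a) of your plan is essentially right (the Brownian property is a property of the law of $(X^n,B^n)$, hence transfers under $(ii)$, and passes to the limit), with one point glossed over: hypothesis $(iii)$ gives convergence of $\bar X^n$ only in $L^{2}(0,T)$ in probability, not uniformly, so in the limiting step you cannot test against arbitrary sup-norm-continuous functionals $F$ of the path of $\bar X^n$ on $[0,s]$; you must use functionals continuous for the $L^2$ topology in the $X$-component (e.g.\ built from $\int_0^{s'}\psi(r)\bar X_r\,dr$), and then recover $\mathcal{F}_s^{\bar X,\bar B}$ from these using the continuity of $\bar X$. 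That is fixable, but it is exactly the same issue that breaks part (b) as you wrote it.

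The genuine gap is in part (b). Your middle term is $\sum_i \bar X^n_{t_i}(\bar B^n_{\cdot\wedge t_{i+1}}-\bar B^n_{\cdot\wedge t_i})-\sum_i \bar X_{t_i}(\bar B_{\cdot\wedge t_{i+1}}-\bar B_{\cdot\wedge t_i})$, and its convergence requires $\bar X^n_{t_i}\to\bar X_{t_i}$ in probability at the fixed partition points; this does \emph{not} follow from $\int_0^T|\bar X^n_s-\bar X_s|^2ds\to 0$ in probability (take $\bar X^n=\bar X+\phi_n$ with $\phi_n$ a ``typewriter'' sequence of thin spikes: the hypotheses and the conclusion of the lemma hold, but $\phi_n(t_i)$ does not tend to $0$, so both your middle term and your bound on $\int_0^T|\bar X^n-\bar X^{n,\pi}|^2$ fail). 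For the same reason your localisation $\tau_R=\inf\{t:|\bar X_t|+\sup_n|\bar X^n_t|\ge R\}\wedge T$ is unsound: $(iii)$ gives no tightness of $\sup_{t\le T}|\bar X^n_t|$ (spikes of height $n$ and width $n^{-3}$ are compatible with $(iii)$), so $\sup_n|\bar X^n_t|$ need not be finite and $\mathbb{P}(\tau_R<T)$ is not small uniformly in $n$; moreover $\mathbb{E}\int_0^T|\bar X^n_s-\bar X^{n,\pi}_s|^2ds$ may be infinite, so BDG plus this localisation cannot close the estimate --- which is precisely the ``main obstacle'' you flag, and it is not merely delicate, it is unrepaired as stated. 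The standard repair is twofold: (1) replace point evaluations by adapted interval averages, i.e.\ define $\bar X^{n,\pi}_s=\frac{1}{t_i-t_{i-1}}\int_{t_{i-1}}^{t_i}\bar X^n_r\,dr$ on $(t_i,t_{i+1}]$ (and similarly $\bar X^{\pi}$ from $\bar X$); then the middle finite sum converges by $(iii)$ and Cauchy--Schwarz, $\int_0^T|\bar X^{n,\pi}-\bar X^{\pi}|^2\le C\int_0^T|\bar X^n-\bar X|^2\to 0$ by Jensen, and $\int_0^T|\bar X-\bar X^{\pi}|^2\to 0$ pathwise as the mesh shrinks by continuity of the \emph{limit} process only; (2) replace BDG-plus-localisation by the Skorokhod--Lenglart type inequality $\mathbb{P}\bigl(\sup_{t\le T}\bigl|\int_0^t\phi\,dB\bigr|\ge\varepsilon\bigr)\le\mathbb{P}\bigl(\int_0^T|\phi_s|^2ds\ge\delta\bigr)+\delta/\varepsilon^{2}$, valid for any adapted $\phi$ with no moment assumptions, applied with $\phi=\bar X^n-\bar X^{n,\pi}$ under $\mathcal{F}_t^{\bar X^n,\bar B^n}$ and $\phi=\bar X-\bar X^{\pi}$ under $\mathcal{F}_t^{\bar X,\bar B}$. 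With these two changes your three-term scheme does go through; without them it does not.
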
 \medskip

\section*{Acknowledgements}

The authors are grateful to the referees for the attention in reading this paper and for theirs very useful suggestions.

\bibliographystyle{model1-num-names}

\end{document}